\newtheorem{theorem}{Theorem}
\newtheorem{lemma}{Lemma}
\newtheorem{corollary}{Corollary}
\newtheorem{conjecture}{Conjecture}
\newtheorem{remark}{Remark}
\begin{document}

\title{Correlations of eigenvalues and Riemann zeros}
\author {J. B. Conrey \and N. C. Snaith}

 \maketitle

\tableofcontents

\section{Introduction}

In 1972 Montgomery \cite{kn:mont73} and Dyson \cite{kn:dyson}
discovered that pairs of zeros of the Riemann zeta-function are
distributed like pairs of eigenvalues of random unitary matrices.
Part of this discovery could be proven, under the assumption of
the Riemann Hypothesis, and part relies on a heuristic based on
the Hardy-Littlewood conjectures for the distribution of prime
pairs.

Odlyzko \cite{kn:odlyzko89}, in the 1980s carried out a
substantial numerical test of Montgomery's conjecture which
provided stunning visual substantiation.

Subsequently Rudnick and Sarnak \cite{kn:rudsar} showed that the
limit high on the critical line of the $n$-correlation of the
zeros of the Riemann zeta-function agreed with that of unitary
matrices, provided that the test function had a Fourier transform
with limited support.

At the same time, Bogomolny and Keating \cite{kn:bogkea96} showed
how the Hardy-Littlewood conjectures could be used to derive the
asymptotic limit of the $n$-correlation.

Bogomolny and Keating \cite{kn:bk96} also investigated the
difference between Montgomery's limiting pair-correlation
conjecture and the data of Odlyzko. A close examination of
Odlyzko's data revealed that lower order terms, likely to be of an
arithmetic nature, were present. They derived formulae for these
lower order terms, initially using the Hardy-Littlewood
conjectures, but subsequently developing a method whose point of
departure was the trace formula of Gutzwiller. They gave full
details for the lower order terms for the 2-point correlation, as
well as numerics showing the goodness of fit, whereas for three
and higher correlations, they outlined several methods which lead
to these lower order terms.

In this paper, we present a different approach to obtaining these
lower order terms for $n$-correlation. Our approach is based on
the `ratios conjecture' of Conrey, Farmer, and Zirnbauer
\cite{kn:cfz1,kn:cfz2} (see also \cite{kn:consna06}). Assuming the
ratios conjecture we prove a formula which explicitly gives all of
the lower order terms in any order correlation. (In the final
section we write down the first four correlations.)

Our method works equally well for random matrix theory. An
interesting feature of this work is the new formula for the
$n$-correlation of random matrix theory that arises by this method
(see Theorem \ref{theo:main}.)  It is a far less elegant formula
than the usual determinantal expression, but it allows for direct
comparison with the number theoretical result, illustrating the
identical structure of the $n$-point correlations of Riemann zeros
and random matrix eigenvalues. In fact, in the scaling limit (when
the variables in the test function are multiplied by $\log T/2\pi$
and $T$, the height up the critical line, becomes large) then all
of the arithmetic features of the formula for the $n$-correlation
of the Riemann zeros disappear and it exactly matches our new
formula for the the $n$-correlation of eigenvalues of unitary
matrices in the equivalent limit. This identification allows us to
prove that in the scaling limit the leading order terms for our
$n$-correlation of the Riemann zeros have the expected
determinantal form.  See \cite{kn:consna07} for the explicit
derivation of the asymptotic limit in the case of the triple
correlation of Riemann zeros.  The higher correlations follow in
exactly the same way.

  This point is significant in view of the difficulty
  in making this identification in other works on $n$-correlation
  and $n$-level density.
  In Rudnick and Sarnak this identification is proven in the
  case of test functions restricted to $[-1,1]$; the proof is quite involved and
  makes serious use of the restriction on the support of the test
  function.
Indeed this point forms a difficulty which shows up, for example,
in the work of Gao \cite{kn:gao05} on $n$-level density for zeros
of quadratic $L$-functions. Rubinstein \cite{kn:rub98} had
evaluated this for test functions whose total support was
contained in $[-1,1]$ and verified the determinantal form for
functions restricted to this class, analogous to what Rudnick and
Sarnak did. Gao extended the range of support to $[-2,2]$ but for
these test functions was unable to derive the determinantal form,
due to combinatorial complexities. Here we handle the case in full
generality without any mention of the test function. It is
possible that our method will shed light on this difficulty that
arises in these other works.

This paper extends the calculation of the triple correlation of
Riemann zeros \cite{kn:consna07}.  An anticipated application of
this current work is to the determination of the lower order terms
in the nearest neighbor spacing for zeta-zeros.

Throughout this paper we assume the truth of the Riemann
Hypothesis.

\section{Background and notation}

\subsection{The Riemann zeta-function}
The Riemann zeta-function is defined by
\begin{eqnarray}\zeta(s)=\sum_{n=1}^\infty \frac{1}{n^s}
\end{eqnarray} for $s=\sigma+it$ with $\sigma >1.$ It has a
meromorphic continuation to the whole complex plane with its only
singularity a simple pole at $s=1$ with residue 1.  It satisfies a
functional equation which, in its symmetric form reads
\begin{eqnarray}
\pi^{-\frac s 2}\Gamma\bigg(\frac s 2 \bigg) \zeta(s) = \pi^{
\frac {s-1} 2}\Gamma\bigg(\frac {1-s} 2 \bigg) \zeta(1-s)
\end{eqnarray}
and in its asymmetric form is
\begin{eqnarray}
\zeta(s)=\chi(s)\zeta(1-s)\end{eqnarray} where
\begin{eqnarray}
\chi(1-s)=\chi(s)^{-1}=2(2\pi)^{-s}\Gamma(s)\cos \frac{\pi s}{2}.
\end{eqnarray}
The product formula discovered by Euler is
\begin{eqnarray}
\zeta(s)=\prod_p \bigg( 1-\frac{1}{p^s}\bigg)^{-1}
\end{eqnarray}
for $\sigma>1$ where the product is over the prime numbers $p$.

The complex zeros of the Riemann zeta-function are denoted by
$\rho=\beta+i\gamma_j$ where it is known that $0<\beta<1.$
  The Riemann Hypothesis asserts that $\beta=1/2$ for all zeros $\rho$. We assume
this is true and denote the zeros as $1/2+i\gamma_j$, where
$0<\gamma_1\le \gamma_2\le \dots.$ The number of $\gamma$ with
$0<\gamma\le T$ is given by
\begin{eqnarray}
N(T)=\#\{\gamma\le T\}=\frac{T}{2\pi}\log \frac{T}{2\pi e} +O(\log
T)
\end{eqnarray}
so that the average distance from one $\gamma$ to the next is
$\sim2\pi/\log \gamma$.

The family $\{\zeta(1/2+it)| t>0\}$ parametrized by real numbers
$t$ can be modeled by characteristic polynomials of unitary
matrices.

\subsection{Unitary matrices}
If $X$ is an $N\times N$ matrix with complex entries $X=(x_{jk})$,
we let $X^*$ be its conjugate transpose, i.e. $X^*=(y_{jk})$ where
$y_{jk}=\overline{x_{kj}}.$ $X$ is said to be unitary if $XX^*=I$.
We let $U(N)$ denote the group of all $N\times N$ unitary
matrices. This is a compact Lie group and has a Haar measure which
allows us to do analysis.

All of the eigenvalues of $X\in U(N)$ have absolute value 1; we
write them as
  \begin{eqnarray}e^{i\theta_1}, e^{i\theta_2}, \dots , e^{i\theta_N}.\end{eqnarray}
The eigenvalues of $X^*$ are $e^{-i\theta_1},\dots,
e^{-i\theta_N}$. Clearly, the determinant, $\det X=\prod_{n=1}^N
e^{i\theta_n}$ of
  a unitary matrix is a complex number
with absolute value equal to 1.

The average distance from one $\theta$ to the next is $2\pi/N$. To
obtain a sequence of numbers with average spacing 1 we let
\begin{eqnarray}
\tilde{\theta_j}=\frac{N\theta_j}{2\pi}.
\end{eqnarray}

For any sequence of $N$ points on the unit circle there are
matrices in $U(N)$ with these points as eigenvalues. The
collection of all matrices with the same set of eigenvalues
constitutes a conjugacy class in $U(N)$.  Thus, the set of
conjugacy classes can identified with the collection of sequences
of $N$ points on the unit circle.

We are interested in computing various statistics about these
eigenvalues. Consequently, we identify all matrices in $U(N)$ that
have the same set of eigenvalues.
  Weyl's integration formula gives a simple way to perform averages over $U(N)$
  for functions $f$ that are constant on conjugacy classes.
  Such functions are called `class functions'.
Weyl's formula asserts that for such an $f$,
\begin{eqnarray}\int_{U(N)} f(X) ~d\mbox{Haar}=\int_{[0,2\pi]^N}
f(\theta_1,\dots,\theta_N)dX_N,\end{eqnarray} where
  \begin{eqnarray} dX_N &=&\prod_{1\le j<k\le
N}\big|e^{i\theta_k}-e^{i\theta_j}\big|^2 ~\frac{d\theta_1 \dots
d\theta_N}{N! (2\pi)^N}.\end{eqnarray} Since $N$ will be fixed in
this paper, we will usually write $dX$ in place of $dX_N$. The Haar
measure can be expressed in terms of the Vandermonde determinant
\begin{eqnarray}
\Delta(w_1,\dots,w_R)=\det_{R\times R}\big(
w_i^{j-1}\big)=\prod_{1\le j < k\le R}(w_k-w_j).
\end{eqnarray}

The characteristic polynomial of a matrix $X$ is denoted
$\Lambda_X(s)$ and is defined by
\begin{eqnarray}\Lambda_X(s)=\det(I-sX^*)=\prod_{n=1}^N(1-se^{-i\theta_n}).
\end{eqnarray}
The roots of $\Lambda_X(s)$ are the eigenvalues of $X$ and are on
the unit circle. The characteristic polynomial  satisfies the
functional equation
\begin{eqnarray} \Lambda_X(s)&=&(-s)^N\prod_{n=1}^N e^{-i\theta_n}\prod_{n=1}^N
(1-e^{i\theta_n}/s)\\
&=&(-1)^N \det X^* ~s^N~\Lambda_{X^*}(1/s).\end{eqnarray} Note that
\begin{eqnarray} \label{eqn:fe}
s\frac{\Lambda_X'}{\Lambda_X}(s)+\frac 1 s
\frac{\Lambda_{X^*}'}{\Lambda_{X^*}}\big(\frac 1s\big)=N.
\end{eqnarray}
These characteristic polynomials have value distributions similar
to that of the Riemann zeta-function and form the basis of Random
Matrix models which predict behavior for the Riemann zeta-function
based on what can be proven about the $\Lambda$. Some care has to
be taken in making these comparisons because we are used to
thinking about the zeta-function in a half-plane whereas the
$\Lambda$ are naturally studied in a circle. The translation is
that the 1/2-line corresponds to the unit circle; the half-plane
to the right of the 1/2-line corresponds to the inside of the unit
circle. Note that $\Lambda_X(0)=1$ is the analogue of
$\lim_{\sigma\to \infty}\zeta(\sigma+it)=1$.

We let
\begin{eqnarray}\label{eq:z}
z(x)=\frac{1}{1-e^{-x}}.
\end{eqnarray}
In our formulas for averages of characteristic polynomials the
function $z(x)$ plays the role for random matrix theory that
$\zeta(1+x)$ plays in the theory of moments of the Riemann
zeta-function.

We want  an accurate formula for
\begin{eqnarray}
\sideset{}{^*}\sum_{0<\gamma_{j_1},\dots,\gamma_{j_n}<T}f(\gamma_{j_1},\dots,\gamma_{j_n}),
\end{eqnarray}
for suitable functions $f$, to be described later, where the sum
is for distinct indices $j$; the desired formula should be
analogous to the RMT theorem which we state in the next section.
\section{Eigenvalue correlations }
Here is a statement for $n$-correlation of eigenvalues of random
unitary matrices of size $N$:
\begin{theorem}
Let $f:[0,2\pi]^n\to \mathbb C$ be a continuous function of
$n$-variables. Then
\begin{eqnarray*}
\int_{U(N)}\sideset{}{^*}\sum_{1\le j_1,\dots , j_n\le N}
f(\theta_{j_1},\dots ,\theta_{j_n}) dX_N = \frac{1}{(2\pi)^n}
\int_{[0,2\pi]^n} f(\theta_1,\dots,\theta_n)
  \det_{n\times n} S_N(\theta_k-\theta_j)~d\theta_1\dots
  ~d\theta_n,
\end{eqnarray*}
where $\sideset{}{^*}\sum$ indicates that the sum is for distinct
indices and where
\begin{eqnarray*}
S_N(\theta)=\frac{\sin \frac {N\theta}{2}}{\sin \frac \theta 2 }.
\end{eqnarray*}
\end{theorem}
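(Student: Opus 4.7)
The plan is to combine Weyl's integration formula (already recorded in the excerpt) with a Gaudin-type reproducing-kernel reduction of the resulting Vandermonde density. First I would apply Weyl's formula and exploit the total symmetry of $dX_N$ in $(\theta_1,\ldots,\theta_N)$: each of the $N!/(N-n)!$ tuples of distinct indices contributes identically, so the left-hand side reduces to
\begin{equation*}
\frac{1}{(N-n)!\,(2\pi)^N}\int_{[0,2\pi]^N} f(\theta_1,\ldots,\theta_n)\prod_{1\le j<k\le N}\bigl|e^{i\theta_k}-e^{i\theta_j}\bigr|^2\,d\theta_1\cdots d\theta_N.
\end{equation*}

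Next I would convert the Vandermonde weight into a determinant of sine kernels. Writing $|\Delta(e^{i\theta_1},\ldots,e^{i\theta_N})|^2=\det(AA^*)$ with $A_{jk}=e^{i(k-1)\theta_j}$ and summing the geometric series gives $(AA^*)_{j\ell}=e^{i(N-1)(\theta_j-\theta_\ell)/2}S_N(\theta_j-\theta_\ell)$. Conjugation by $\mathrm{diag}(e^{i(N-1)\theta_j/2})$ removes the phase, so the Vandermonde weight is exactly $\det(S_N(\theta_j-\theta_\ell))_{1\le j,\ell\le N}$. I would then recognize $K_N(\theta,\phi):=(2\pi)^{-1}\sum_{k=0}^{N-1}e^{ik(\theta-\phi)}$ as the reproducing kernel for the orthogonal projection in $L^2([0,2\pi])$ onto the span of $\{e^{ik\theta}\}_{k=0}^{N-1}$; a direct check yields $\int_0^{2\pi}K_N(\theta,\phi)K_N(\phi,\psi)\,d\phi=K_N(\theta,\psi)$ and $\int_0^{2\pi}K_N(\theta,\theta)\,d\theta=N$, while the same phase cancellation gives $\det(S_N(\theta_j-\theta_\ell))_{m\times m}=(2\pi)^m\det(K_N(\theta_j,\theta_\ell))_{m\times m}$ for every $m$.

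The integration over $\theta_{n+1},\ldots,\theta_N$ is then handled by Gaudin's lemma, which combines the reproducing identity with the trace computation to give
\begin{equation*}
\int_0^{2\pi}\det\bigl(K_N(\theta_j,\theta_\ell)\bigr)_{1\le j,\ell\le m}\,d\theta_m = (N-m+1)\det\bigl(K_N(\theta_j,\theta_\ell)\bigr)_{1\le j,\ell\le m-1}.
\end{equation*}
Iterating this from $m=N$ down to $m=n+1$ produces a factor $(N-n)!$ times $\det(K_N)_{n\times n}$. Collecting everything, the $1/((N-n)!\,(2\pi)^N)$ from Weyl and symmetrization, the $(2\pi)^N$ from converting $\det S_N$ to $\det K_N$ at size $N$, the $(N-n)!$ from Gaudin, and the $1/(2\pi)^n$ from converting $\det K_N$ back to $\det S_N$ at size $n$ telescope to $1/(2\pi)^n$, which is exactly the prefactor claimed. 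The only real obstacle is bookkeeping: tracking the row/column phases through the $\det(AA^*)$ identity and the combinatorial constants through the Gaudin reduction so that they cancel cleanly. Conceptually, nothing beyond Weyl's formula and the reproducing property of $K_N$ is required.
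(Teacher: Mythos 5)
Your proof is correct, and it is precisely the classical argument that the paper cites as ``a well-known consequence of Gaudin's Lemma'' (pointing to the reference \cite{kn:conrey04}) but deliberately does not reproduce. The steps all check: symmetrization under Weyl's formula gives the factor $\frac{N!/(N-n)!}{N!\,(2\pi)^N}=\frac{1}{(N-n)!\,(2\pi)^N}$; the identity $|\Delta|^2=\det(AA^*)$ together with conjugation by $\mathrm{diag}(e^{i(N-1)\theta_j/2})$ correctly yields $\det\big(S_N(\theta_j-\theta_\ell)\big)_{N\times N}$; and the Gaudin reduction for the projection kernel $K_N$ (which satisfies $K_N\ast K_N=K_N$ and has trace $N$) supplies the $(N-n)!$ and the powers of $2\pi$ that telescope to the stated prefactor $(2\pi)^{-n}$. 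Where you and the paper part ways is that the authors' aim is a \emph{new} proof of this theorem, valid for $2\pi$-periodic holomorphic test functions, built on the ratios theorem for averages of characteristic polynomials: they write the full (not distinct-index) correlation sum as a multiple contour integral of the logarithmic-derivative average $J(A;B)$ (Theorem \ref{theo:offtheline}), then move the contours onto the imaginary axis, using the residue identity of Lemma \ref{lem:residue} to show that the residue contributions exactly account for the coincident-index terms, arriving at the non-determinantal expression of Theorem \ref{theo:main}, which is then identified with $\det S_N$ in the examples. Your route is much shorter, works for merely continuous $f$ (the generality in which the theorem is stated, whereas the ratios route needs periodic holomorphic $f$), and uses nothing beyond the reproducing property of $K_N$; the paper's route buys the ability to transfer the entire argument verbatim to the Riemann zeta-function via the ratios conjecture, which is the point of the paper. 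So your proposal is a complete and valid proof of the statement, but it is the proof the authors are explicitly choosing not to give.
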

This theorem is a well-known consequence of Gaudin's Lemma, see
\cite{kn:conrey04}.

In the following sections we present a new proof of this theorem,
for periodic, holomorphic test functions $f$, based on a formula
for averaging ratios of characteristic polynomials of unitary
matrices.  There are many proofs of this formula, see
\cite{kn:cfz1,kn:cfs05,kn:bumgam06}. The point of this approach is
that it has a natural analog in the theory of $L$-functions.

\subsection{Averages of ratios of characteristic polynomials}
\label{sect:ratiolambda}

The statement of the ratios theorem is slightly complicated. We
attempt to make it easier to comprehend by eliminating subscripts.
So, let there be given finite sets $A, B, C$ and $D$ and consider
\begin{eqnarray} \label{eq:R}
&&\mathcal R(A,B;C,D):=\int_{U(N)}\frac{\prod_{\alpha\in A}
\Lambda_X(e^{-\alpha})\prod_{\beta\in B}
\Lambda_{X^*}(e^{-\beta})} {\prod_{\gamma\in
C}\Lambda_X(e^{-\gamma}) \prod_{\delta\in D}
\Lambda_{X^*}(e^{-\delta})}dX,
\end{eqnarray}
with $\Re \gamma>0, \Re \delta >0$.  Theorem \ref{theo:rat}, the
ratios theorem, is written in an equivalent but slightly different
form to previous work, where we express $\mathcal R(A,B;C,D)$ as a
sum over subsets $S\subset A$ and $T\subset B$ with $|S|=|T|$. Each
term in this sum essentially has the same structure, except that the
elements of $S$ effectively exchange places with those in $T$. In
addition we let $\overline{S}=A-S$ and $\overline{T}=B-T$. We will
let $\hat \alpha$ denote a generic member of $S$ and $\hat \beta$
denote a generic member of $T$; we will use $\alpha$ and $\beta$ for
generic members of $A$ and $B$ or of $\overline{S}$ and
$\overline{T}$, according to the context. Also $S^-=\{-\hat\alpha:
\hat\alpha\in S\}$, and similarly for $T^-$. The Ratios Theorem is
most easily stated in terms of
\begin{eqnarray}
Z(A,B):=\prod_{\alpha\in A\atop\beta\in B}z(\alpha+\beta),
\end{eqnarray}
where $z(x)=\frac{1}{1-e^{-x}}$, and
\begin{eqnarray}\label{eq:Z}Z(A,B;C,D):=
\frac{\prod_{\alpha\in A\atop \beta\in B}
z(\alpha+\beta)\prod_{\gamma\in C\atop \delta\in
D}z(\gamma+\delta)} {\prod_{\alpha\in A\atop  \delta\in D}
z(\alpha+\delta) \prod_{\beta\in B\atop \gamma\in C}
z(\beta+\gamma)}=\frac{Z(A,B)Z(C,D)}{Z(A,D)Z(B,C)}.\end{eqnarray}
\begin{theorem}[Ratios Theorem \cite{kn:cfz1,kn:cfs05}] \label{theo:rat}
With $\Re \gamma>0, \Re \delta >0$ for $\gamma\in C$ and $\delta
\in D$, $|C|\leq|A|+N$ and $|D|\leq|B|+N$, we have
\begin{eqnarray*}
&&\mathcal R(A,B;C,D)=\sum_{S\subset A,T\subset B\atop
|S|=|T|}e^{-N(\sum_{\hat\alpha\in S} \hat \alpha +\sum_{\hat\beta
\in T}\hat\beta)} Z(\overline{S}+ T^-,\overline{T}+ S^-;C,D),
\end{eqnarray*}
where $A=S+\overline{S}$, $B=T+\overline{T}$ and $Z$ is defined at
(\ref{eq:Z}).
\end{theorem}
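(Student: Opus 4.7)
My approach is to reduce the identity to a ``base case'' using Weyl's integration formula, then generate the full subset sum by applying the functional equation (\ref{eqn:fe}) to selected subsets of the parameters. In the regime $\Re\alpha,\Re\beta>0$ for all $\alpha\in A$, $\beta\in B$, only the term $S=T=\emptyset$ should contribute, and the claim reduces to the ``pure'' identity $\mathcal R(A,B;C,D)=Z(A,B;C,D)$. I would prove this base case directly: after applying Weyl's formula and substituting $\Lambda_X(e^{-\alpha})=\prod_j(1-e^{-\alpha-i\theta_j})$ and $\Lambda_{X^*}(e^{-\beta})=\prod_j(1-e^{-\beta+i\theta_j})$, expand the denominator factors as convergent geometric series in $e^{\pm i\theta_j}$ and apply the Andr\'eief / Cauchy--Binet identity against $|\Delta(e^{i\theta_1},\dots,e^{i\theta_N})|^2$. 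This reduces the average to a finite determinant of Cauchy type, which evaluates in closed form to $Z(A,B;C,D)$ after some algebra on the $z$-function ratios.

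\textbf{Functional-equation swaps.} For general $A,B$, given $S\subset A$ and $T\subset B$ with $|S|=|T|$, I would apply the functional equation $\Lambda_X(s)=(-1)^N\det(X^*)\,s^N\Lambda_{X^*}(1/s)$ to each numerator factor $\Lambda_X(e^{-\hat\alpha})$ with $\hat\alpha\in S$. This produces $(-1)^N\det(X^*)\,e^{-N\hat\alpha}\Lambda_{X^*}(e^{\hat\alpha})$; rewriting $e^{\hat\alpha}=e^{-(-\hat\alpha)}$ shows that $\hat\alpha$ has effectively migrated into the $B$-side of the integrand at the value $-\hat\alpha$, accounting for the set $S^-$. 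Symmetrically, applying the inverse functional equation to each $\hat\beta\in T$ moves $-\hat\beta$ onto the $A$-side as $T^-$. The accumulated determinantal factor $(\det X^*)^{|S|}(\det X)^{|T|}$ collapses to $1$ precisely because $|S|=|T|$ and $\det X^*=(\det X)^{-1}$, while the shift exponentials accumulate to $e^{-N(\sum_{\hat\alpha\in S}\hat\alpha+\sum_{\hat\beta\in T}\hat\beta)}$. The residual integrand is then of the base-case form with parameter sets $\overline S+T^-$ and $\overline T+S^-$, and the base case (Step~1) evaluates it to $Z(\overline S+T^-,\overline T+S^-;C,D)$.

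\textbf{Main obstacle.} The crux is not evaluating a single term, but explaining why a \emph{sum} over all $(S,T)$ with $|S|=|T|$ arises. I would handle this by a uniqueness-of-meromorphic-continuation argument: both sides are meromorphic functions of the parameters whose only poles, given the hypotheses on $C,D$, occur where $\alpha+\delta=0$ or $\beta+\gamma=0$; they agree on the open region $\Re\alpha,\Re\beta>0$ by the base case; and the right-hand side has exactly the $e^{-N\hat\alpha}$-decay as any single $\Re\hat\alpha\to\infty$ forced by the functional equation. These three properties pin down the formula. Verifying that no extraneous terms arise in the analytic continuation, and that the subset sum reproduces $\mathcal R$ exactly, is the delicate combinatorial point. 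This is essentially what is carried out in the cited proofs \cite{kn:cfz1,kn:cfs05,kn:bumgam06} via supersymmetric integration, determinantal identities, and symmetric function theory, respectively; any of these techniques could be used to discharge the bookkeeping rigorously.
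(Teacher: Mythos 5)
First, a point of reference: the paper does not prove Theorem \ref{theo:rat} at all --- it is imported from \cite{kn:cfz1,kn:cfs05} --- so there is no in-paper argument to match your proposal against; it must stand on its own. Judged that way, it has a genuine flaw at its foundation. The claimed base case, that $\mathcal R(A,B;C,D)=Z(A,B;C,D)$ when $\Re\alpha,\Re\beta>0$ for all $\alpha\in A$, $\beta\in B$, is false: the Ratios Theorem is an exact identity at finite $N$, and the terms with nonempty $S,T$ are exponentially small in $N$ for fixed positive real parts but they do not vanish. For instance with $A=\{\alpha\}$, $B=\{\beta\}$, $C=D=\emptyset$ one computes directly $\int_{U(N)}\Lambda_X(e^{-\alpha})\Lambda_{X^*}(e^{-\beta})\,dX=\sum_{j=0}^{N}e^{-j(\alpha+\beta)}=z(\alpha+\beta)+e^{-N(\alpha+\beta)}z(-\alpha-\beta)$, which is the full two-term sum of the theorem, not the single term $z(\alpha+\beta)$. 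This error then propagates: if $\mathcal R$ really agreed with the single term $Z(A,B;C,D)$ on an open set of parameters, the identity theorem for analytic functions would force equality everywhere, contradicting the statement you are trying to prove. Relatedly, the functional-equation ``swaps'' only rewrite the same integral in another form; applied to an exact identity they cannot generate additional additive terms, so they explain the \emph{shape} of the answer (this is the standard recipe heuristic) without deriving it, and the three properties you list in the continuation argument do not characterize a meromorphic function uniquely in any case.

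The irony is that the tool you reserve for the base case is essentially the whole proof. Expanding the denominator characteristic polynomials as geometric series (valid for $\Re\gamma,\Re\delta>0$) and applying Andr\'eief/Cauchy--Binet against $|\Delta(e^{i\theta_1},\dots,e^{i\theta_N})|^2$ does reduce $\mathcal R$ to a determinant of Cauchy type, but the closed-form evaluation of that determinant already contains all $\sum_k\binom{|A|}{k}\binom{|B|}{k}$ terms of the subset sum; extracting them (via Lagrange-interpolation or symmetric-function expansions of the determinant) is precisely the combinatorial content of \cite{kn:cfs05,kn:bumgam06}. So the correct repair is not to bolt a continuation argument onto a one-term base case, but to carry the determinant evaluation through to the end. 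As written, the proposal defers exactly this step to the references, so the essential content of the theorem is not established.
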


\subsection{Averages of logarithmic derivatives of characteristic
polynomials}

For use in determining multiple correlation we differentiate the
Ratios Theorem to obtain a theorem about averages of logarithmic
derivatives:

\begin{theorem}
\label{theo:J} If $\Re \alpha_j>0$ and $ \Re \beta_j
>0$ for  $\alpha_j\in A$ and $\beta_j\in B$, then $J(A;B)=J^*(A;B)$  where
\begin{eqnarray}
J(A;B)&:=& \int_{U(N)}\prod_{\alpha\in A}
(-e^{-\alpha})\frac{\Lambda_X'}{\Lambda_X}(e^{-\alpha})\prod_{\beta\in
B} (-e^{-\beta})\frac{\Lambda_{X^*}'}{\Lambda_{X^*}}(e^{-\beta})~ dX
,
\end{eqnarray}
 \begin{eqnarray} &&J^*(A;B):= \nonumber \\
 &&\qquad\qquad\sum_{S\subset A,T\subset B\atop
|S|=|T|}e^{-N(\sum_{\hat \alpha\in S} \hat \alpha
+\sum_{\hat{\beta}\in T}\hat\beta)} \frac{Z(S,T)Z(S^-,T^-)} {
Z^{\dagger}(S,S^-)Z^{\dagger}(T,T^-)} \sum_{{(A-S)+ (B-T)\atop =
U_1+\dots + U_R}\atop |U_r|\le 2}\prod_{r=1}^R H_{S,T}(U_r),
\end{eqnarray}
and
\begin{equation}\label{eqn:Hrmt}
H_{S,T}(W)=\left\{\begin{array}{ll} \sum_{\hat \alpha\in
S}\frac{z'}{z}(\alpha-\hat{\alpha})-\sum_{\hat\beta\in T}
\frac{z'}{z}(\alpha +\hat \beta) &\mbox{ if $W=\{\alpha\}\subset
A-S$}
   \\
\sum_{\hat\beta\in T}\frac{z'}{z}(\beta-\hat
\beta)-\sum_{\hat\alpha\in S} \frac{z'}{z}
(\beta+\hat\alpha) &\mbox{ if  $W=\{\beta\}\subset B-T$}\\
\left(\frac{z'}{z}\right)'(\alpha+\beta) & \mbox{ if
$W=\{\alpha,\beta\}$ with $
{\alpha \in A-S, \atop \beta\in B-T}$}\\
0&\mbox{ otherwise}.
\end{array}\right.
\end{equation}
Also, $Z(A,B)=\prod_{\alpha\in A\atop\beta\in B}z(\alpha+\beta)$,
with the dagger on $Z^\dagger(S,S^-)$ imposing the additional
restriction that a factor $z(x)$ is omitted if its argument is
zero.

\end{theorem}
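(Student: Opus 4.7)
The plan is to prove the identity by differentiating Theorem~\ref{theo:rat} and then evaluating at the ``diagonal'' $\gamma_i=\alpha_i$, $\delta_j=\beta_j$. Specifically, since $\partial_\alpha\Lambda_X(e^{-\alpha})=-e^{-\alpha}\Lambda_X'(e^{-\alpha})$, a pairing of $C=\{\gamma_i\}$ with $A$ and $D=\{\delta_j\}$ with $B$ (with $|C|=|A|$, $|D|=|B|$) yields
\[
J(A;B)\;=\;\prod_{\alpha\in A}\frac{\partial}{\partial\alpha}\,\prod_{\beta\in B}\frac{\partial}{\partial\beta}\,\mathcal{R}(A,B;C,D)\bigg|_{\gamma_i=\alpha_i,\,\delta_j=\beta_j},
\]
because once all $|A|+|B|$ derivatives have been taken, the denominator of $\mathcal R$ divides the differentiated numerator to leave exactly the integrand of $J(A;B)$. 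Inserting Theorem~\ref{theo:rat} reduces the problem to computing, for each $(S,T)$ with $|S|=|T|$, the above derivative applied to $e^{-N(\sum\hat\alpha+\sum\hat\beta)}\,Z(\overline{S}+T^-,\overline{T}+S^-;C,D)$ at the diagonal.

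For each fixed $(S,T)$, the key step is to introduce the deviations $u_{\hat\alpha}=\gamma_{\hat\alpha}-\hat\alpha$ and $v_{\hat\beta}=\delta_{\hat\beta}-\hat\beta$ and to observe that the only factors of $Z(\overline{S}+T^-,\overline{T}+S^-;C,D)$ which become singular at $u=v=0$ are the $|S|$ factors $1/z(u_{\hat\alpha})$ inside $Z(\overline{T}+S^-,C)^{-1}$ and the $|T|$ factors $1/z(v_{\hat\beta})$ inside $Z(\overline{S}+T^-,D)^{-1}$. Since $1/z(w)=1-e^{-w}=w+O(w^2)$ has a simple zero at the origin, we can factor
\[
Z(\overline{S}+T^-,\overline{T}+S^-;C,D)\;=\;\prod_{\hat\alpha\in S}u_{\hat\alpha}\prod_{\hat\beta\in T}v_{\hat\beta}\cdot\mathcal{H}_{S,T}(\alpha,\beta,\gamma,\delta)
\]
with $\mathcal{H}_{S,T}$ analytic at the diagonal. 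A direct algebraic simplification --- most factors cancel pairwise between $Z(\overline{S}+T^-,\overline{T}+S^-)\,Z(A,B)$ in the numerator and $Z^\dagger(\overline{S}+T^-,B)\,Z^\dagger(\overline{T}+S^-,A)$ in the denominator (the $\dagger$ recording the singular factors just stripped) --- will give $\mathcal{H}_{S,T}\big|_{\gamma=\alpha,\delta=\beta} = Z(S,T)Z(S^-,T^-)/[Z^\dagger(S,S^-)Z^\dagger(T,T^-)]$, precisely the prefactor appearing in $J^*$. A Leibniz-rule analysis then shows that for a term to survive evaluation at $u=v=0$ each $\partial_{\hat\alpha}$ ($\hat\alpha\in S$) must differentiate its matching $u_{\hat\alpha}$ and each $\partial_{\hat\beta}$ ($\hat\beta\in T$) its matching $v_{\hat\beta}$, contributing $-1$ apiece for a cumulative sign $(-1)^{|S|+|T|}=1$. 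The remaining $|A-S|+|B-T|$ derivatives fall on $\mathcal{H}_{S,T}$.

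For these I would invoke the multivariate Fa\`a di Bruno (cumulant-to-moment) identity
\[
\prod_{\xi}\partial_\xi\mathcal{H}\;=\;\mathcal{H}\cdot\sum_\pi\prod_{U\in\pi}\Bigl(\prod_{\xi\in U}\partial_\xi\Bigr)\log\mathcal{H},
\]
the sum over set partitions $\pi$ of $(A-S)\cup(B-T)$. Because the singular contribution $\sum\log u_{\hat\alpha}+\sum\log v_{\hat\beta}$ is independent of every variable in $A-S$ or $B-T$, the partial derivatives of $\log\mathcal{H}_{S,T}$ at the diagonal agree with the corresponding partials of $\log Z(\overline{S}+T^-,\overline{T}+S^-;C,D)$, and a mechanical term-by-term calculation evaluates these as $H_{S,T}(\{\alpha\})$ from~(\ref{eqn:Hrmt}) for a singleton $\{\alpha\}\subset A-S$, $H_{S,T}(\{\beta\})$ for $\{\beta\}\subset B-T$, $(z'/z)'(\alpha+\beta)$ for a mixed pair $\{\alpha,\beta\}$, and zero for every other block type (since each $\log z(x+y)$ couples only one variable from the first set with one from the second, all third-order mixed partials and same-type second-order partials vanish). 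The partition sum therefore collapses to the inner sum in $J^*(A;B)$, and assembling the pieces completes the proof.

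The main obstacle will be the algebraic bookkeeping of the second step --- verifying the precise cancellation producing $Z(S,T)Z(S^-,T^-)/[Z^\dagger(S,S^-)Z^\dagger(T,T^-)]$, and confirming that exactly the four types of partials listed are non-zero at the diagonal --- both routine but requiring careful tracking of which variables in $A$, $B$ appear inside each of the four $Z$-factors.
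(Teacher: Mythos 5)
Your proposal is correct and follows essentially the same route as the paper: differentiate the Ratios Theorem, note that each $\partial_{\hat\alpha}$, $\partial_{\hat\beta}$ must hit the vanishing factor $1/z(\gamma_{\hat\alpha}-\hat\alpha)$ (resp.\ $1/z(\delta_{\hat\beta}-\hat\beta)$) to survive the diagonal substitution (yielding the prefactor $Z(S,T)Z(S^-,T^-)/[Z^\dagger(S,S^-)Z^\dagger(T,T^-)]$ and sign $(-1)^{|S|+|T|}=1$), and then apply the set-partition/Fa\`a di Bruno identity (the paper's Lemma \ref{lemma:diff}) to the remaining derivatives, with only singleton and mixed-pair blocks surviving. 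The claimed cancellation for $\mathcal{H}_{S,T}$ at the diagonal and the evaluations of the log-derivative blocks all check out against (\ref{eqn:Ha})--(\ref{eq:H3}).
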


\begin{remark}The definitions of $J(A;B)$ and $J^*(A;B)$  make sense
without the restriction that $\Re \alpha_j>0,$ and $ \Re \beta_j
>0$. However, the two are not equal without these
restrictions.\end{remark}

\begin{remark}Note that $J^*(A;B)$  has a pole when an $\alpha\in A$ is
equal to $-\beta$, for some $\beta \in B$. It also appears to have
a pole when two $\alpha$'s are equal, say $\alpha_1=\alpha_2$,
occurring when $\alpha_1\in S$ and $\alpha_2\notin S$, as seen in
the term $\frac{z'}{z}(\alpha-\hat\alpha)$ of (\ref{eqn:Ha}).
However, this is cancelled by a pole with residue of the opposite
sign when $S$ is replaced by $S-\{\alpha_1\}+ \{\alpha_2\}$. The
same phenomenon occurs when two $\beta$'s are equal, as can be
seen in the concrete examples given in (\ref{eqn:Jabb}) and
(\ref{eqn:Jabbb}).\end{remark}

\begin{proof} By (\ref{eq:R}), we have
\begin{eqnarray} \label{eq:Jasderiv}
J(A;B) =\left.\prod_{\alpha\in A \atop \beta \in
B}\frac{d}{d\alpha}\frac{d}{d\beta} \mathcal
R(A,B;C,D)\right|_{C=A \atop D=B}.
\end{eqnarray}
Of course, in this situation $|C|=|A|$ and $|D|=|B|$; so that we
may think of $A=\{\alpha_1,\dots ,\alpha_k\}$ and
$C=\{\gamma_1,\dots,\gamma_k\}$ and then the substitution
``$C=A$''means the substitution $\gamma_i=\alpha_i$ for
$i=1,2,\dots, k$, and similarly for $D$ and $B$.

Recall from Theorem \ref{theo:rat} that $\mathcal  R$ is expressed
as a sum of $Z$ over subsets $S$ and $T$.  In performing the
differentiations in (\ref{eq:Jasderiv}) we will find that the
derivatives with respect to the variables in $S$ and $T$ are fairly
simple to perform (as we will show below, culminating in
(\ref{eq:JSTderiv})), but we will need Lemma \ref{lemma:diff} to
differentiate with respect to the remaining variables.  Hence we
first
 rewrite $Z$ so as to separate these variable types.

 Note first that $Z(A,B)=Z(B,A)$ and that
$Z$ behaves nicely with respect to unions:
\begin{eqnarray}\label{eqn:Zunion}
Z(A_1+ A_2,B)=Z(A_1,B)Z(A_2,B).
\end{eqnarray}
Recall that $A=S+ \overline{S}$ and $B=T+ \overline{T}$ and put
$C=C_S+ C_{\overline{S}}$ and $D=D_T+ D_{\overline{T}}$ where we
think of $C_S$, for example, as being the set that will be
substituted by $S$ when eventually $C$ is substituted by $A$. Then
using (\ref{eqn:Zunion}) repeatedly,  we have
\begin{eqnarray}\label{eq:Zswitched}
&&Z(\overline{S}+ T^-,\overline{T}+ S^-;C,D)\\\nonumber
&&\qquad\qquad=\frac{Z(\overline{S},\overline{T})Z(\overline{S},S^-)Z(T^-,\overline{T})Z(T^-,S^-)Z(C,D)}{Z(\overline{S},D)Z(\overline{T},C)Z(S^-,C_{\overline{S}})Z(T^-,D_{\overline{T}})Z(S^-,C_S)Z(T^-,D_T)}.
\end{eqnarray}
This simplifies further if we make the substitution for $C$ and $D$:
\begin{eqnarray} \label{eqn:nonsense}
  Z(\overline{S}+ T^-,\overline{T}+
S^-;C,D)\big|_{C=A\atop
D=B}=\frac{Z(S,T)Z(S^-,T^-)}{Z(S,S^-)Z(T,T^-)}=Z(S,T;T^-,S^-).
\end{eqnarray}
Note that since  $z(x)$ has a pole at $x=0$, the resulting
expression is 0 unless both $S$ and $T$ are empty.

    We now differentiate (\ref{eq:Zswitched}) with respect
to the variables in $S$ and $T$; these derivatives are easy to
calculate because, anticipating the substitution of each
$\gamma\in C_S$ by an $\hat \alpha$ we see that in differentiating
with respect to $\hat \alpha$ the expression $z(\gamma-\hat
\alpha)$ in the denominator
 (one of the factors of
$Z(S^-,C_S)$) must be differentiated; if not it makes the whole
expression 0 after the substitution is made because $z(x)$ has a
pole at $x=0$. Using the notation
\begin{eqnarray}
Z^{\dagger}(X,Y)=\prod_{x\in X,y\in Y\atop x+y\ne 0}z(x+y),
\end{eqnarray}
and noting that
\begin{eqnarray} \frac{d}{d\hat\alpha}
\frac{1}{z(\gamma-\hat\alpha)} = -e^{\hat\alpha-\gamma},
\end{eqnarray}
we have, for example,
\begin{eqnarray}
\prod_{\hat\alpha\in S}\frac{d}{d\hat\alpha}
\frac{1}{Z(S^-,C_S)}\bigg|_{C_S=S}=\frac{(-1)^{|S|}}{Z^{\dagger}(S^-,S)}.
\end{eqnarray}
In this way we obtain
\begin{eqnarray}\label{eq:JSTderiv}
&& J(A;B)= \sum_{S,T\atop |S|=|T|}e^{-N(\sum \hat \alpha
+\sum\hat\beta)}
\frac{Z(S,T)Z(S^-,T^-)}{Z^{\dagger}(S,S^-)Z^{\dagger}(T,T^-)}
\\&&\qquad  \times \left. \nonumber \prod_{\alpha\in \overline{S}\atop \beta\in
\overline{T}}\frac{d}{d\alpha}\frac{d}{d\beta}
  \left(\frac{Z(\overline{S},\overline{T})Z(\overline{S},S^-)Z(\overline{T},T^-)Z(C,D)}
{Z(S,T)Z(C_{\overline{S}},S^-)Z(D_{\overline{T}},T^-)
Z(\overline{S},D)Z(\overline{T},C)
  }\right)\right|_{C=A\atop D=B}.\end{eqnarray}
Note that the sets $C_{\overline{S}}$ and $D_{\overline{T}}$ vary
from term to term in the sum over $S$ and $T$ since the division of
$C$ into the union of sets $C_S$ and $C_{\overline{S}}$ mimics the
form of $A=S+\overline{S}$, and similarly for $D$.  Also observe
that
\begin{eqnarray}
\frac{Z(\overline{S},\overline{T})Z(\overline{S},S^-)Z(\overline{T},T^-)Z(C,D)}
{Z(S,T)Z(C_{\overline{S}},S^-)Z(D_{\overline{T}},T^-)
Z(\overline{S},D)Z(\overline{T},C)
  }\bigg|_{C=A\atop D=B}=1.
  \end{eqnarray}
  To perform the   differentiations in (\ref{eq:JSTderiv}) we use a
form of logarithmic differentiation expressed in the following.

\begin{lemma}
\label{lemma:diff} Let $H$ be a differentiable function of $w\in
W$. Then
\begin{eqnarray}
\left(\prod_{w\in W}\frac{d}{dw}\right)e^H=  e^{H }
\sum_{W=W_1+\dots+W_r}H(W_1)\dots H(W_r)
\end{eqnarray}
where
\begin{eqnarray}
H(W)=\left(\prod_{w\in W} \frac{d}{dw}\right)H.
\end{eqnarray}
The sum is over all set partitions of     $W$ into disjoint sets
$W_j$.
\end{lemma}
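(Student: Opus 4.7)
The plan is to prove Lemma \ref{lemma:diff} by induction on $|W|$, with the combinatorial content reducing to the recursive definition of a set partition.

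For the base case $|W|=1$, say $W=\{w\}$, there is exactly one set partition (the singleton) and $\frac{d}{dw}e^H = e^H\frac{dH}{dw} = e^H H(W)$, which matches the claim. (One may alternately take $|W|=0$ as the base, with the empty product of derivatives yielding $e^H$ and the empty sum-over-partitions contributing the single empty product equal to $1$.)

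For the inductive step, suppose the formula holds for $|W|=n$ and let $W'=W\cup\{w_0\}$ with $w_0\notin W$. By the inductive hypothesis,
\begin{equation*}
\left(\prod_{w\in W}\frac{d}{dw}\right)e^H = e^H\sum_{W=W_1+\dots+W_r}H(W_1)\cdots H(W_r).
\end{equation*}
Applying $\frac{d}{dw_0}$ and using the product rule, together with the identity $\frac{d}{dw_0}H(W_i)=H(W_i\cup\{w_0\})$ (which follows immediately from the definition of $H(W_i)$), one gets
\begin{equation*}
e^H H(\{w_0\})\sum_{W=W_1+\dots+W_r}\prod_{i=1}^r H(W_i) \;+\; e^H\sum_{W=W_1+\dots+W_r}\sum_{j=1}^r H(W_j\cup\{w_0\})\prod_{i\ne j}H(W_i).
\end{equation*}

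The final step is combinatorial: the first sum enumerates exactly those set partitions of $W'$ in which $\{w_0\}$ appears as its own block, while the second sum enumerates exactly those in which $w_0$ is adjoined to some existing block of a partition of $W$. Every set partition of $W'$ falls into one of these two categories in a unique way (depending on which block contains $w_0$), so the two sums combine to give $e^H\sum_{W'=W'_1+\dots+W'_s}H(W'_1)\cdots H(W'_s)$, completing the induction.

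The only potentially delicate point is ensuring that the bijection in the last step is counted without duplication. This is immediate from the fact that, given a set partition of $W'$, the block containing $w_0$ is uniquely determined, and removing $w_0$ from that block (and discarding it if it was a singleton) produces a unique set partition of $W$ together with the datum of whether $w_0$ was a singleton or was adjoined. Thus the argument is essentially a routine induction, and there is no substantive obstacle beyond this bookkeeping.
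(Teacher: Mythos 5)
Your proof is correct. The paper itself offers no argument for this lemma beyond the remark that it ``is obvious upon working a few examples,'' so your induction on $|W|$ --- with the inductive step organized by whether $w_0$ forms a singleton block or is adjoined to an existing block of a partition of $W$ --- is exactly the natural formalization of what the authors leave implicit, and the bijection between the two product-rule contributions and the set partitions of $W'$ is handled correctly.
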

In words this Lemma says that to perform a derivative with respect
to each variable once, we form all of the set partitions of the
complete set of variables and add up over these set partitions the
product of the partial derivatives of the exponent $H$ with
respect to each variable in each subset of the partition. This
lemma is obvious upon working a few examples.

  We apply this lemma with
\begin{eqnarray} \label{eqn:Hdef} &&
H=H^{A,B,C,D}_{S,T}:=\sum_{\alpha\in \overline{S}\atop \beta\in
\overline{T}} \log z(\alpha+\beta)+\sum_{\alpha\in \overline{S}\atop
\hat \alpha\in S} \log z(\alpha-\hat\alpha) + \sum_{\beta\in
\overline{T}\atop \hat \beta\in T}
  \log z(\beta-\hat\beta)\\
&& \qquad \qquad \qquad \qquad
  -
\sum_{\alpha\in \overline{S}\atop \delta\in D} \log
z(\alpha+\delta)-\sum_{\beta\in \overline{T}\atop \gamma\in C} \log
z(\beta+\gamma)\nonumber
  \end{eqnarray}
and so obtain, with
$H_{S,T}(W):=H^{A,B,C,D}_{S,T}(W)\big|_{C=A\atop
D=B}=\left(\left(\prod_{w\in W}
\frac{d}{dw}\right)H^{A,B,C,D}_{S,T}\right)\big|_{C=A\atop D=B}$,
\begin{eqnarray}
&& J(A;B)= \sum_{S,T\atop |S|=|T|}e^{-N(\sum \hat \alpha
+\sum\hat\beta)}
\frac{Z(S,T)Z(S^-,T^-)}{Z^{\dagger}(S,S^-)Z^{\dagger}(T,T^-)}\sum_{\overline{S}+
\overline{T}\atop = W_1+\dots +W_R}\prod_{r=1}^R H_{S,T}(W_r).
\end{eqnarray}
Strictly speaking, $H_{S,T}(W)$ depends on $A$ and $B$, but from now
on use of $H_{S,T}(W)$ will refer always to the expressions in
(\ref{eqn:Ha})-(\ref{eq:H3}) and these can be used without
specifically referring to $A$ and $B$.

By consideration of (\ref{eqn:Hdef}) it is clear that we can
restrict the subsets $W_r$ to be singletons or else pairs which
have precisely one $\alpha$ and one $\beta$. This follows from
some easy calculations.  Since
\begin{eqnarray}
H^{A,B,C,D}_{S,T}(\{\alpha\})=\sum_{\beta\in \overline{T}}
\frac{z'}{z}(\alpha+\beta)+\sum_{\hat \alpha \in S}
\frac{z'}{z}(\alpha-\hat\alpha) -\sum_{\delta \in D}
\frac{z'}{z}(\alpha+\delta),
\end{eqnarray}
we have
\begin{eqnarray} \label{eqn:Ha}
H_{S,T}(\{\alpha\})&=& \sum_{\hat \alpha\in S}
\frac{z'}{z}(\alpha-\hat\alpha) -\sum_{\hat \beta\in
T}\frac{z'}{z}(\alpha+\hat \beta), \;\;\; \alpha\notin S.
\end{eqnarray}
Similarly,
\begin{eqnarray} \label{eqn:Hb}
H_{S,T}(\{\beta\})&=& \sum_{\hat \beta\in T}
\frac{z'}{z}(\beta-\hat\beta) -\sum_{\hat \alpha\in
S}\frac{z'}{z}(\beta+\hat \alpha),\;\;\; \beta \notin T.
\end{eqnarray}
In addition
\begin{eqnarray} \label{eqn:Hab}
H_{S,T}(\{\alpha,\beta \})=
\left(\frac{z'}{z}\right)'(\alpha+\beta),\;\;\; \alpha,\beta \notin
S {\rm \;or\;} T.
\end{eqnarray}
Also, \begin{equation}\label{eq:H0} H_{S,T}(\emptyset)=1,
\end{equation}
 and
 \begin{equation}\label{eq:Haa}
H_{S,T}(\{\alpha,\alpha'\})=H_{S,T}(\{\beta,\beta'\})=0
\end{equation}
 and
 \begin{equation}\label{eq:H3}
  H_{S,T}(W)=0,\;\;\; {\rm \;if\;} |W|\ge 3.
  \end{equation}
\end{proof}

\subsection{Residue identity}\label{sect:res}

A key ingredient of the proof of $n$-correlation will be the
following residue identity for $J^*(A;B)$:

\begin{lemma} \label{lem:residue} Suppose that $\alpha^*\in A$ and $\beta^*\in B$.  Let $A'=A-\{\alpha^*\}$
and $B'=B-\{\beta^*\}.$ Then $J^*(A;B)$ has a simple pole at
$\alpha^*=-\beta^*$ with
\begin{eqnarray}
\operatornamewithlimits{Res}_{\alpha^*=-\beta^*} J^*(A;B)
=NJ^*(A';B')+J^*(A';B)+J^*(A' +\{-\beta^*\};B').
\end{eqnarray}
\end{lemma}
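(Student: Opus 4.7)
The plan is to compute the residue of $J^*(A;B)$ at $\epsilon := \alpha^* + \beta^* = 0$ by a case analysis on the sum over $(S,T)$ in the definition of $J^*(A;B)$. I will split the sum into four cases according to the membership of $\alpha^*$ in $S$ or $A \setminus S$ and of $\beta^*$ in $T$ or $B \setminus T$. The central analytic identity that drives the argument is $z(x)\,z(-x) = -(z'/z)'(x) = -1/(4\sinh^2(x/2))$, which follows from the product formula $z(x) = e^{x/2}/(2\sinh(x/2))$. This identity will show that two apparent double-pole contributions cancel exactly, so $J^*(A;B)$ has only a simple pole at $\alpha^* = -\beta^*$.

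In case (i) ($\alpha^* \in S$, $\beta^* \in T$), write $S = S' \cup \{\alpha^*\}$ and $T = T' \cup \{\beta^*\}$ with $|S'|=|T'|$; the pole comes from $z(\epsilon)$ in $Z(S,T)$ and $z(-\epsilon)$ in $Z(S^-,T^-)$. In case (iv) with the pair $\{\alpha^*,\beta^*\}$ in the partition of $(A-S)+(B-T)$, the pole comes from $H_{S,T}(\{\alpha^*,\beta^*\}) = (z'/z)'(\epsilon)$. The identity above groups these two, for matched $S',T'$, into $(z'/z)'(\epsilon)\bigl[R_{iv}(\epsilon) - e^{-N\epsilon}R_i(\epsilon)\bigr]$. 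The key simplification is that the extra $z$-factors produced in the $Z$-ratio by enlarging $(S',T')$ to $(S,T)$ cancel pointwise, at $\beta^* = -\alpha^*$, against the matching extra $Z^\dagger$-factors, so $R_i(0) = R_{iv}(0)$ equals the $(S',T')$-summand of $J^*(A';B')$. The double pole cancels, and the simple-pole residue equals $NR_i(0) - R_i'(0)$; summing over $(S',T')$ produces $NJ^*(A';B') - \sum_{(S',T')} R_i'(0)$.

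Cases (ii) and (iii) each contribute a simple pole from the factor $-z'/z(\epsilon) = 1/\epsilon + O(1)$ that appears inside $H_{S,T}(\{\beta^*\})$ or $H_{S,T}(\{\alpha^*\})$ when that element is a singleton of the partition. In case (ii), writing $S = S' \cup \{\alpha^*\}$, the residue matches at $\beta^* = -\alpha^*$ the summand of $J^*(A'+\{-\beta^*\};B')$ indexed by $(S' \cup \{-\beta^*\}, T)$, yielding the $-\beta^* \in S$ part of that sum. Case (iii) symmetrically produces the $\beta^* \in T$ part of $J^*(A';B)$.

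The main obstacle will be to identify the derivative term $-\sum_{(S',T')} R_i'(0)$ with the complementary parts, namely the $\beta^* \notin T$ part of $J^*(A';B)$ together with the $-\beta^* \notin S$ part of $J^*(A'+\{-\beta^*\};B')$. Differentiating $R_i$ with respect to $\beta^*$ at $\beta^* = -\alpha^*$ yields two kinds of contributions: derivatives of the simplified $(z'/z)'$ factors in the $Z$-ratio, and derivatives of the $H_{S\cup\{\alpha^*\}, T\cup\{\beta^*\}}(W_r)$ factors arising from the dependence on $\beta^* \in T$. After using $z(x)z(-x) = -(z'/z)'(x)$ again, these derivative terms rearrange into precisely the new singleton-$\{\beta^*\}$ and pair-$\{\alpha,\beta^*\}$ contributions in the partition of $(A'\setminus S') + (B'\setminus T') + \{\beta^*\}$, and the singleton-$\{-\beta^*\}$ and pair-$\{-\beta^*,\beta\}$ contributions in the partition of $(A'\setminus S') + \{-\beta^*\} + (B'\setminus T')$. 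Verifying this combinatorial match—checking signs, exponents $e^{-N(\ldots)}$, and factors of $(z'/z)'$ across all partition types—is the heart of the calculation and completes the identity.
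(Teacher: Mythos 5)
Your proposal is correct and follows essentially the same route as the paper's proof: the same four-way split according to whether $\alpha^*\in S$ and $\beta^*\in T$, the same cancellation of the double poles between the two extreme cases (the paper uses the Laurent expansions $z(x)z(-x)=-x^{-2}+O(1)$ and $(z'/z)'(x)=x^{-2}+O(1)$ where you invoke the exact identity $z(x)z(-x)=-(z'/z)'(x)$, a clean way to see the same cancellation), and the same identification of the two mixed cases with the $\beta^*\in T$ part of $J^*(A';B)$ and the $-\beta^*\in S$ part of $J^*(A'+\{-\beta^*\};B')$. The combinatorial regrouping you defer does go through exactly as you predict: it is the content of the paper's property P4, where $\tfrac{d}{d\alpha^*}H_{S,T}(W)\big|_{\alpha^*=-\beta^*}=-H_{S',T'}(W+\{\alpha^*\})\big|_{\alpha^*=-\beta^*}-H_{S',T'}(W+\{\beta^*\})$ together with the expansion of $Q_1$ converts the Leibniz-rule sum in $-R_i'(0)$ into the sums over partitions of $\overline{S}+\{-\beta^*\}+\overline{T}$ and of $\overline{S}+\overline{T}+\{\beta^*\}$, i.e.\ the complementary $-\beta^*\notin S$ and $\beta^*\notin T$ parts.
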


\begin{proof}
By Theorem \ref{theo:J} we have
\begin{eqnarray}
J^*(A,B)=\sum_{{S\subset A\atop T\subset B}\atop |S|=|T|}
D_{S,T}(\overline{S},\overline{T})
\end{eqnarray}
where throughout this proof (and this paper) $A=\overline{S}+S$,
$B=\overline{T}+T$, and
\begin{eqnarray}
  D_{S,T}(\overline{S},\overline{T})=Q(S,T)\sum_{\overline{S}+ \overline{T} =\sum W_r}\prod_{r} H_{S,T}(W_r).
\end{eqnarray}
Here the sum is over any collection of non-empty sets $W_1,W_2,\dots
$ which form a partition of  $\overline{S}+ \overline{T}$,
\begin{eqnarray}
   Q(S,T)=e^{-N(\sum_{\hat \alpha\in S} \hat \alpha +\sum_{\hat{\beta}\in T}\hat\beta)}
\frac{Z(S,T)Z(S^-,T^-)}{Z^{\dagger}(S,S^-)Z^{\dagger}(T,T^-)}
\end{eqnarray}
and $H_{S,T}(W)$ is defined in  (\ref{eqn:Ha})-(\ref{eq:H3}). We
claim that $D$, $Q$ and $H$ have the following properties:

\begin{description}

\item[P1]If $\alpha^*\in \overline{S}$ and $\beta^*\in \overline{T}$, then $Q(S,T)$ is
independent of $\alpha^*$ and $\beta^*$ and
\begin{eqnarray}
H_{S,T}(W)=\left\{ \begin{array}{ll}
\frac{1}{(\alpha^*+\beta^*)^2}+O(1)
& \mbox{ if $W=\{\alpha^*,\beta^*\}$ }\\
$O(1)$ & \mbox{ otherwise }
\end{array} \right.
\end{eqnarray}
\item[P2] If $\alpha^*\in S$ and $\beta^*\in \overline{T}$, then $Q(S,T)$ is
regular when $\alpha^*=-\beta^* $ and
\begin{eqnarray}
H_{S,T}(W)=\left\{ \begin{array}{ll}
\frac{1}{\alpha^*+\beta^*}+O(1)
& \mbox{ if $W=\{\beta^*\}$ }\\
$O(1)$ & \mbox{ otherwise }
\end{array} \right.
\end{eqnarray}
\item[P3] If $\alpha^*\in \overline{S}$ and $\beta^*\in T$, then $Q(S,T)$ is
regular when $\alpha^*=-\beta^*$ and
\begin{eqnarray}
H_{S,T}(W)=\left\{ \begin{array}{ll}
\frac{1}{\alpha^*+\beta^*}+O(1)
& \mbox{ if $W=\{\alpha^*\}$ }\\
$O(1)$ & \mbox{ otherwise }
\end{array} \right.
\end{eqnarray}
\item[P4] If $\alpha^*\in S$ and $\beta^*\in T$ and
$S'=S-\{\alpha^*\}$ and $T'=T-\{\beta^*\}$, then
$Q(S,T)=\big(\frac{-1}{(\alpha^*+\beta^*)^2}+O(1)\big)Q_1(S,T)$
where
\begin{eqnarray}
&&Q_1(S,T)=Q(S',T')\big(1-(\alpha^*+\beta^*)\big(N+
H_{S',T'}(\{\alpha^*\})|_{\alpha^*=-\beta^*}+
H_{S',T'}(\{\beta^*\})\big)\nonumber
\\
&&\qquad\qquad\qquad\qquad\qquad\qquad\qquad\qquad\qquad\qquad
\qquad\qquad+O(|\alpha^*+\beta^*|^2)\big)
\end{eqnarray}
and
\begin{eqnarray}
&&H_{S,T}(W)=H_{S',T'}(W)-(\alpha^*+\beta^*)\big(H_{S',T'}(W+\{\alpha^*\})_{\alpha^*=-\beta^*}
+H_{S',T'} (W+\{\beta^*\})\big)\nonumber
\\
&&\qquad\qquad\qquad\qquad\qquad\qquad\qquad\qquad\qquad\qquad
\qquad\qquad\quad+O(|\alpha^*+\beta^*|^2).
\end{eqnarray}
\end{description}
We show that the lemma follows from these four properties and then
prove that these properties hold in this situation. (We will later
demonstrate a proof along very similar lines when we treat the
$n$-correlation of the zeta-zeros.)

From these four properties we obtain four Laurent or Taylor
expansions of $D_{S,T}(\overline{S},\overline{T})$ as a function of
$\alpha^*$ in a neighborhood of $-\beta^*$:
\begin{itemize}
\item
   If $\alpha^*\in \overline{S}$ and $\beta^*\in \overline{T}$, then (with $\overline{S}'=\overline{S}-\{\alpha^*\}$
   and $\overline{T}'=\overline{T}-\{\beta^*\}$)
\begin{eqnarray}\label{eq:item1}
D_{S,T}(\overline{S},\overline{T})&=&
\left(\frac{1}{(\alpha^*+\beta^*)^2}+O(1)\right)
  Q(S,T) \sum_{\overline{S}'+ \overline{T}' =\sum W_r}\prod_{r}
  H_{S,T}(W_r)\nonumber\\
  &=& \left(\frac{1}{(\alpha^*+\beta^*)^2}+O(1)\right)D_{S,T}(\overline{S}',\overline{T}');
\end{eqnarray}
consequently,
$\operatornamewithlimits{Res}_{\alpha^*=-\beta^*}D_{S,T}(\overline{S},\overline{T})=0$.
\item If $\alpha^*\in S$ and $\beta^*\in \overline{T}$, then
\begin{eqnarray}
\operatornamewithlimits{Res}_{\alpha^*=-\beta^*}D_{S,T}(\overline{S},\overline{T})&=&Q(S'+\{-\beta^*\},T)\sum_{\overline{S}+
\overline{T}' =\sum W_r}\prod_{r} H_{S'+\{-\beta^*\},T}(W_r)
\\&=&D_{S'+\{-\beta^*\},T}(\overline{S},\overline{T}').\nonumber
\end{eqnarray}
\item If $\alpha^*\in \overline{S}$ and $\beta^*\in T$, then
\begin{eqnarray}
\operatornamewithlimits{Res}_{\alpha^*=-\beta^*}D_{S,T}(\overline{S},\overline{T})&=&Q(S,T)\sum_{\overline{S}'+
\overline{T} =\sum W_r}\prod_{r} H_{S,T}(W_r)
\\
&=&D_{S,T}(\overline{S}',\overline{T}).\nonumber
\end{eqnarray}
\item If $\alpha^*\in S$ and $\beta^*\in T$, then
\begin{eqnarray}\label{eq:item4} &&
Q_1(S,T)\sum_{\overline{S}+ \overline{T} =\sum W_r}\prod_r
H_{S,T}(W_r)=Q(S',T') \sum_{\overline{S}+ \overline{T} =\sum
W_r}\prod_r H_{S',T'}(W_r)\\
&&\quad\times\bigg(1+(\alpha^*+\beta^*)\Big(N+
H_{S',T'}(\{\alpha^*\})|_{\alpha^*=-\beta^*}+
H_{S',T'}(\{\beta^*\})\nonumber
\\
&&\qquad\qquad+\sum_r\frac{H_{S',T'}(W_r+\{\alpha^*\})|_{\alpha^*=-\beta^*}
+H_{S',T'}(W_r+\{\beta^*\})}{H_{S',T'}(W_r)}\Big)\nonumber\\
&&\qquad\qquad\qquad\qquad\qquad\qquad\qquad\qquad\qquad
+O(|\alpha^*+\beta^*|^2)\bigg).\nonumber
\end{eqnarray}
Therefore in this final case,
\begin{eqnarray}
\operatornamewithlimits{Res}_{\alpha^*=-\beta^*}D_{S,T}(\overline{S},\overline{T})=N
D_{S',T'}(\overline{S},\overline{T})+D_{S',T'}(\overline{S}+
\{-\beta^*\},\overline{T})+D_{S',T'}(\overline{S},\overline{T}+
\{\beta^*\}).
\end{eqnarray}
Note that (\ref{eq:item4}) can be written as
\begin{eqnarray}\label{eq:DDDD}
D_{S',T'}(\overline{S},\overline{T})\Big(1+O\big(|\alpha^*+\beta^*|\big)\Big).
\end{eqnarray}
\end{itemize}

By (\ref{eq:item1}) and (\ref{eq:item4}) the double poles in P1
and P4 cancel because
\begin{eqnarray}
\sum_{S\subset A, T\subset B\atop {|S|=|T|\atop \{\alpha^*\}\in S,
\{\beta^*\}\in T}}D_{S',T'}(\overline{S},\overline{T}) =
\sum_{S\subset A, T\subset B\atop {|S|=|T|\atop \{\alpha^*\}\notin
S, \{\beta^*\}\notin T}}D_{S,T}(\overline{S}',\overline{T}')
=J^*(A',B');
\end{eqnarray}
therefore, the pole at $\alpha^*=-\beta^*$ is simple.

Combining the four bullet-points above, we have (where as usual
the primed notation means that $\alpha^*$ or $\beta^*$ has been
removed from that set)
\begin{eqnarray}
&&\operatornamewithlimits{Res}_{\alpha^*=-\beta^*}
J^*(A;B)\nonumber
\\
&&\qquad=\sum_{S\subset A, T\subset B\atop {|S|=|T|\atop
\{\alpha^*\}\in S, \{\beta^*\}\in T}} (N D_{S',T'}
(\overline{S},\overline{T}))+D_{S',T'}(\overline{S}+
\{-\beta^*\},\overline{T})+D_{S',T'}(\overline{S},\overline{T}+
\{\beta^*\})
\\
&&\qquad \qquad\qquad+\sum_{S\subset A, T\subset B\atop
{|S|=|T|\atop \{\alpha^*\}\notin S, \{\beta^*\}\in T}}
D_{S,T}(\overline{S}',\overline{T})+ \sum_{S\subset A, T\subset
B\atop {|S|=|T|\atop \{\alpha^*\}\in S, \{\beta^*\}\notin T}}
D_{S'+\{-\beta^*\},T}(\overline{S},\overline{T}')\nonumber
\end{eqnarray}
Note that since $\alpha^*$ doesn't appear in any of the summands,
with the temporary convention that $A'=R+\overline{R}$, we can
relabel two of the sums as follows:
\begin{eqnarray}
&&\sum_{S\subset A, T\subset B\atop {|S|=|T|\atop \{\alpha^*\}\in S,
\{\beta^*\}\in T}}D_{S',T'}(\overline{S},\overline{T}+ \{\beta^*\})
+ \sum_{S\subset A, T\subset B\atop {|S|=|T|\atop \{\alpha^*\}\notin
S,
\{\beta^*\}\in T}} D_{S,T}(\overline{S}',\overline{T})\nonumber \\
&&\quad \qquad=\sum_{R\subset A', T\subset B\atop {|R|=|T|\atop
\{\beta^*\}\notin T}}D_{R,T}(\overline{R},\overline{T}) +
\sum_{R\subset A', T\subset B\atop {|R|=|T|\atop
\{\beta^*\}\in T}} D_{R,T}(\overline{R},\overline{T})\nonumber \\
&& \quad \qquad= J^*(A';B).
\end{eqnarray}
Similarly,
\begin{eqnarray}
&&\Bigg(\sum_{S\subset A, T\subset B\atop {|S|=|T|\atop
\{\alpha^*\}\in S, \{\beta^*\}\in T}}D_{S',T'}(\overline{S}+
\{\alpha^*\},\overline{T}) +\sum_{S\subset A, T\subset B\atop
{|S|=|T|\atop \{\alpha^*\}\in S, \{\beta^*\}\notin T}}
D_{S,T}(\overline{S},\overline{T}')\Bigg) \Bigg|_{\alpha^*=-\beta^*}
\nonumber
\\
&&\quad\qquad\quad=J^*(A;B')\big|_{\alpha^*=-\beta^*} =
J^*(A'+\{-\beta^*\};B').
\end{eqnarray}
Thus we have arrived at
\begin{eqnarray}
\operatornamewithlimits{Res}_{\alpha^*=-\beta^*} J^*(A;B)
=NJ^*(A';B')+J^*(A';B)+J^*(A' +\{-\beta^*\};B'),
\end{eqnarray}
which is the statement of the lemma.



Now we verify that properties P1 through P4 are satisfied in the
random matrix situation where we have
\begin{eqnarray}
   Q(S,T)=e^{-N(\sum_{\hat \alpha\in S} \hat \alpha +\sum_{\hat{\beta}\in T}\hat\beta)}
\frac{Z(S,T)Z(S^-,T^-)}{Z^{\dagger}(S,S^-)Z^{\dagger}(T,T^-)}
\end{eqnarray}
    and
\begin{equation}\label{eqn:Heval}
H_{S,T}(W)=\left\{\begin{array}{ll} \sum_{\hat{\alpha}\in
S}\frac{z'}{z}(\alpha-\hat \alpha)-\sum_{\hat \beta\in T}
\frac{z'}{z}(\alpha+\hat \beta)
  &\mbox{ if $W=\{\alpha\}\subset A-S$}\\
\sum_{\hat \beta \in T}\frac{z'}{z}(\beta-\hat \beta )-\sum_{\hat
\alpha \in S} \frac{z'}{z}(\beta+\hat \alpha)
  &\mbox{ if $W=\{\beta\}\subset B-T$}\\
\left(\frac{z'}{z}\right)'(\alpha +\beta) &  {\rm \;if\;} W=\{\alpha,\beta\} {\rm \;with\;}
{{\alpha\in A-S}\atop {\beta\in B-T}}\\
0&\mbox{ otherwise}
\end{array}
\right.
\end{equation}

 We will
start with the simplest case that $\alpha^*\in \overline{S}$,
$\beta^*\in \overline{T}$. The only polar term from
$\alpha^*=-\beta^*$ arises from a situation when one of the
partition parts $W_r=\{\alpha^*,\beta^*\}$ and there is a pole from
$H_{S,T}(W_r)=\left(\frac{z'}{z}\right)'(\alpha^*+\beta^*)$. Since
$\left(\frac{z'}{z}\right)'(x)=1/x^2+O(1)$ and $\alpha^*$ and
$\beta^*$ don't appear in $Q(S,T)$, this completes the proof of P1.

 Next, suppose that $\alpha^*\in
S$ and $\beta^*\in \overline{T}$. The only pole in
$D_{S,T}(\overline{S},\overline{T})$ occurs in the product of the
$H$ for $H_{S,T}(W_r)$ when $W_r=\{\beta^*\}$. We have
\begin{eqnarray}\label{eqn:innotin}
H_{S,T}(\{\beta^*\})=\sum_{\hat\beta \in T}\frac{z'}{z}(\hat
\beta-\beta^*)- \sum_{\hat \alpha\in S}\frac{z'}{z}(\beta^*+\hat
\alpha)
\end{eqnarray}
for which, when $\hat \alpha=\alpha^*$, the term
$-\frac{z'}{z}(\beta^*+\alpha^*)$ has a simple pole at $
\alpha^*=-\beta^*$ with residue 1.  $Q(S,T)$ is clearly regular at
$\alpha^*=-\beta^*$.

Next, when $\alpha^*\in \overline{S}$ and $\beta^*\in T$, the only
pole in the product of the $H$ occurs for $H_{S,T}(\{\alpha^*\})$.
We have
\begin{eqnarray}\label{eqn:innotin}
H_{S,T}(\{\alpha^*\})=\sum_{\hat\alpha \in S}\frac{z'}{z}(\hat
\alpha-\alpha^*)- \sum_{\hat \beta\in T}\frac{z'}{z}(\alpha^*+\hat
\beta)
\end{eqnarray}
for which, when $\hat \beta =\beta^*$, the term
$-\frac{z'}{z}(\alpha^*+\beta^*)$ has a simple pole at
$\alpha^*=-\beta^*$ with residue 1. $Q(S,T)$ does not depend on
$\alpha^*$.

  Finally, we consider the case
$\alpha^*\in S$ and $\beta^*\in T$. We have
\begin{eqnarray}
Q(S,T)=z(\alpha^*+\beta^*)z(-\alpha^*-\beta^*) Q_1(S,T)
\end{eqnarray}
where
\begin{eqnarray}
Q_1(S,T)&=&Q(S',T')\nonumber
\\
&&\qquad \times e^{-N(\alpha^*+\beta^*)}\frac{\prod_{\hat \beta\in
T'}z(\alpha^*+\hat \beta)z(-\alpha^*-\hat \beta)\prod_{\hat
\alpha\in S'} z(\hat \alpha+\beta^*)z(-\hat
\alpha-\beta^*)}{\prod_{\hat \alpha\in S'}z(\alpha^*-\hat
\alpha)z(\hat \alpha-\alpha^*)\prod_{\hat \beta\in T'}
z(\beta^*-\hat \beta)z(\hat \beta-\beta^*)}.
\end{eqnarray}
Note that
\begin{eqnarray}
Q_1(S,T)\big|_{\alpha^*=-\beta^*}=Q(S',T').
\end{eqnarray}
We are well on our way to verifying property P4 because
$z(\alpha^*+\beta^*)z(-\alpha^*-\beta^*)
=\frac{-1}{(\alpha^*+\beta^*)^2}+\frac{1}{12}+O(|\alpha^*+\beta^*|)$
and we have an expansion for $Q_1(S,T)$ in the neighborhood of
$\alpha^*=-\beta^*$:
\begin{eqnarray} \label{eqn:Q} \nonumber
Q_1(S,T)&=&
Q(S',T')\big(1-N(\alpha^*+\beta^*)+O(|\alpha^*+\beta^*|^2\big))\\
&&\qquad \times \bigg(1+(\alpha^*+\beta^*)\bigg( \sum_{\hat\alpha
\in S'}\Big( \frac{z'}{z}(\hat \alpha+\beta^*)-
\frac{z'}{z}(-\beta^*-\hat\alpha)\Big)\nonumber
\\
&&\qquad \qquad +\sum_{\hat \beta\in T'}
\Big(\frac{z'}{z}(-\beta^*+\hat \beta)-\frac{z'}{z}(\beta^*-\hat \beta)\Big)\bigg)
+O(|\alpha^*+\beta^*|^2)\bigg)\nonumber\\
&=&Q(S',T')\bigg(1-(\alpha^*+\beta^*)\big(N+H_{S',T'}(\{\alpha^*\})|_{\alpha^*=-\beta^*}\\
&&\qquad \qquad\qquad+
  H_{S',T'}(\{\beta^*\})\big)+O(|\alpha^*+\beta^*|^2)\bigg).\nonumber
\end{eqnarray}
Now we obtain an expansion for $H_{S,T}(W)$, where we remember that
$W\subset \overline{S}+\overline{T}$ and so $W$ does not contain
$\alpha^*$ or $\beta^*$. By (\ref{eqn:Heval}) we have that
\begin{eqnarray}
H_{S,T}(\{\alpha\})&=&\sum_{\hat{\alpha}\in
S}\frac{z'}{z}(\alpha-\hat \alpha)-\sum_{\hat \beta\in T}
\frac{z'}{z}(\alpha+\hat \beta)\nonumber\\
&=&  H_{S',T'}(\{\alpha\})+ \frac{z'}{z}(\alpha-\alpha^*)-
\frac{z'}{z}(\alpha+ \beta^*);
\end{eqnarray}
\begin{eqnarray}
H_{S,T}(\{\beta\})&=&\sum_{\hat \beta \in
T}\frac{z'}{z}(\beta-\hat \beta )-
\sum_{\hat \alpha \in S} \frac{z'}{z}(\beta+\hat \alpha)\nonumber\\
&=& H_{S',T'}(\{\beta\})+ \frac{z'}{z}(\beta-\beta^*)-
\frac{z'}{z}(\beta+ \alpha^*);
\end{eqnarray}
and
\begin{eqnarray}
H_{S,T}(\{\alpha,\beta\})=\left(\frac{z'}{z}\right)'(\alpha
+\beta)=H_{S',T'}(\{\alpha,\beta\}).
\end{eqnarray}
Thus,
\begin{eqnarray}
H_{S,T}(W)\bigg|_{\alpha^*=-\beta^*}=H_{S',T'}(W)
\end{eqnarray}
and
\begin{equation}
\frac{d}{d\alpha^*} H_{S,T}(W)\bigg|_{\alpha*=-\beta^*}=\left\{
\begin{array}{ll}
  -\left(\frac{z'}{z}\right)'(\alpha+\beta^*)
  & \mbox{ if $W=\{\alpha\}$ }\\
  -\left(\frac{z'}{z}\right)'(\beta-\beta^*)& \mbox{ if $W=\{\beta\}$} \\
0&\mbox{ otherwise}
\end{array}\right\}.
\end{equation}
Note that we can write this as
\begin{equation}
\frac{d}{d\alpha^*}
H_{S,T}(W)\bigg|_{\alpha*=-\beta^*}=-H_{S',T'}(W+\{\alpha^*\})\big|_{\alpha^*=-\beta^*}
-H_{S',T'}(W+\{\beta^*\}),
\end{equation}
where one or both of the terms will be zero.  From this the
expansion of $H_{S,T}(W)$ in P4 follows.

This concludes the proof of Lemma \ref{lem:residue}.
  \end{proof}

\subsection{$n$-correlation via the ratios theorem}

In this section we will prove the following expression for the
$n$-correlation.

\begin{theorem} \label{theo:offtheline}
Let $\mathcal C_-$ denote the path  from $-\delta+\pi i$ down to
$-\delta-\pi i$ and  let $\mathcal C_+$ denote the path  from
$\delta-\pi i$ up to $\delta+\pi i$ and let $f$ be a
$2\pi$-periodic, holomorphic function of $n$ variables. Using the
notation $J(A;B)$ from Theorem \ref{theo:J},
\begin{eqnarray}\label{eq:offtheline}&&\int_{U(N)}\sum_{j_1,\dots ,j_n=1}^N
f(\theta_{j_1},\dots,\theta_{j_n})dX\nonumber
\\
&&\qquad =\frac{1}{(2\pi i)^n} \sum_{K+L+M=
\{1,\dots,n\}}(-1)^{|L|+|M|} N^{|M|} \\
&&\qquad\qquad\qquad \times\int_{\mathcal {C_+}^K} \int_{\mathcal
{C_-}^{L+ M}}J(z_K;-z_L) f(iz_1,\dots,iz_n)~dz_1\dots
~dz_n\nonumber
\end{eqnarray}
where
  $z_K=\{z_k:k\in K\}$,  $-z_L=\{-z_\ell:\ell\in L\}$ and $\int_{\mathcal {C_+}^K} \int_{\mathcal {C_-}^{L+ M}}$
  means that we are integrating all of the variables in $z_K$ along the $\mathcal C_+$
  path  and all of the variables in $z_{L}$ or $z_{M}$ along the $\mathcal C_-$
  path.
  \end{theorem}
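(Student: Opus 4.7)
The plan is to realize each of the $n$ eigenvalue sums as a contour integral of a logarithmic derivative of $\Lambda_X$, split each contour into its $\mathcal{C}_+$ and $\mathcal{C}_-$ pieces, use the functional equation (\ref{eqn:fe}) on $\mathcal{C}_-$ to convert $\Lambda_X$-factors into $\Lambda_{X^*}$-factors, and recognize the resulting Haar averages as $J(z_K;-z_L)$.

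First I would record the single-variable identity
\begin{eqnarray*}
\sum_{n=1}^N g(\theta_n)=\frac{1}{2\pi i}\oint g(iw)\left(-e^{-w}\frac{\Lambda_X'}{\Lambda_X}(e^{-w})\right)dw
\end{eqnarray*}
for any $2\pi$-periodic, holomorphic $g$, with the contour encircling once counterclockwise the imaginary-axis segment containing the $n$ points $w=-i\theta_n$. This is valid because $\Lambda_X(e^{-w})$ has simple zeros at these points, so its logarithmic derivative contributes residues equal to $1$ and $g(iw)|_{w=-i\theta_n}=g(\theta_n)$. Iterating once per index and swapping the Haar and contour integrations by Fubini gives
\begin{eqnarray*}
\int_{U(N)}\sum_{j_1,\dots,j_n=1}^N f(\theta_{j_1},\dots,\theta_{j_n})\,dX = \frac{1}{(2\pi i)^n}\oint\cdots\oint f(iw_1,\dots,iw_n)\,\mathcal{I}(w_1,\dots,w_n)\,dw_1\cdots dw_n,
\end{eqnarray*}
where $\mathcal{I}(w_1,\dots,w_n)$ is the Haar average of $\prod_j(-e^{-w_j})(\Lambda_X'/\Lambda_X)(e^{-w_j})$.

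Next I would deform each $w_j$-contour to $\mathcal{C}_+\cup\mathcal{C}_-$; the connecting horizontal segments at $\pm\pi i$ cancel by the $2\pi i$-periodicity in $w$ of both $f(iw)$ and of the log-derivative. On each $\mathcal{C}_-$ piece, setting $\beta_j=-w_j$ (so $\Re\beta_j=\delta>0$) and applying (\ref{eqn:fe}) at $s=e^{-w_j}$ yields
\begin{eqnarray*}
-e^{-w_j}\frac{\Lambda_X'}{\Lambda_X}(e^{-w_j})=-N-\left(-e^{-\beta_j}\frac{\Lambda_{X^*}'}{\Lambda_{X^*}}(e^{-\beta_j})\right),
\end{eqnarray*}
so each $\mathcal{C}_-$-factor splits in two and expanding the product over $j$ produces $3^n$ terms indexed by ordered partitions $K+L+M=\{1,\dots,n\}$: $K$ collects indices kept on $\mathcal{C}_+$ with the $\Lambda_X$ factor, $L$ those on $\mathcal{C}_-$ taking the $\Lambda_{X^*}$ term (sign $-1$ each), and $M$ those on $\mathcal{C}_-$ taking the $-N$ term. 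Renaming $z_j=w_j$ and performing the Haar integral, the product of $\Lambda_X$-factors over $K$ times the product of $\Lambda_{X^*}$-factors over $L$ (with $\beta_j=-z_j$) is precisely the integrand in the definition of $J(z_K;-z_L)$, and the scalar bookkeeping produces $(-1)^{|L|}(-N)^{|M|}=(-1)^{|L|+|M|}N^{|M|}$. Summing all terms yields (\ref{eq:offtheline}).

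The main technical point to check is the legitimacy of replacing the Haar average by $J(z_K;-z_L)$ as defined in Theorem~\ref{theo:J}: this requires $\Re\alpha>0$ for $\alpha\in z_K$ and $\Re\beta>0$ for $\beta\in-z_L$, which is arranged precisely by $\Re z_k=\delta$ on $\mathcal{C}_+$ and $\Re(-z_\ell)=\delta$ on $\mathcal{C}_-$. The contour deformation crosses no singularities since those of the original integrand lie on the imaginary axis, and on the product of contours the sets $z_K$ and $z_L$ sit on disjoint vertical lines, so no coincidence $z_k+(-z_\ell)=0$ can occur; the apparent poles of $J$ when two $\alpha$'s or two $\beta$'s coincide cancel internally, as noted in the second Remark after Theorem~\ref{theo:J}.
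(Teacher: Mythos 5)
Your proposal is correct and follows essentially the same route as the paper's own proof: the residue representation of the eigenvalue sum via $\frac{d}{dw}\log\Lambda_X(e^{-w})$, cancellation of the horizontal sides of the rectangle by periodicity, the functional equation (\ref{eqn:fe}) applied on $\mathcal C_-$, and the $3^n$-fold expansion into $K+L+M$ with the sign bookkeeping $(-1)^{|L|}(-N)^{|M|}=(-1)^{|L|+|M|}N^{|M|}$. The only cosmetic differences are that you perform the change of variable $z\to -z$ at the outset rather than after averaging, and you expand directly into the three-set partition rather than first over $K$ and then over $L+M$.
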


\begin{proof}
 Since
\begin{eqnarray}
g(z)=\Lambda_X(e^z)=\prod_{j=1}^N\left(1-e^ze^{-i\theta_j}\right)
\end{eqnarray}
has zeros at $z=i\theta_j+2\pi i m$, $m\in\mathbb Z$, by Cauchy's
theorem we can express a sum
\begin{eqnarray}
\sum_{j=1}^Nf(\theta_j)=\frac{1}{2\pi i}\int_{\mathcal
C}\frac{g'}{g}(z)f(z/i)~dz =\frac{1}{2\pi i}\int_{\mathcal C}e^z
\frac{\Lambda_X'}{\Lambda_X}(e^z)f(z/i)~dz
\end{eqnarray}
where $\mathcal C$ is a positively oriented contour which encloses
a subinterval of the imaginary axis of length $2\pi$. We choose a
specific path $\mathcal C$ to be the positively oriented rectangle
that has vertices $\delta-\pi i,\delta+\pi i, -\delta+\pi i,
-\delta-\pi i$ where $\delta $ is a small positive number. More
generally, we have
\begin{eqnarray}
&&\sum_{j_1,\dots
,j_n=1}^Nf(\theta_{j_1},\dots,\theta_{j_n})\nonumber
\\
&&\qquad\qquad=\frac{1}{(2\pi i)^n} \int_{\mathcal C}\dots
\int_{\mathcal C} \prod_{j=1}^n e^{z_j}
\frac{\Lambda_X'}{\Lambda_X}(e^{z_j})
f(z_1/i,\dots,z_n/i)~dz_1\dots dz_n.
\end{eqnarray}
We average this equation over $X\in U(N)$ and,  after a change of
variables $z_j\to -z_j$, we obtain
\begin{eqnarray} \label{eqn:basic}
&&\int_{U(N)}\sum_{j_1,\dots
,j_n=1}^Nf(\theta_{j_1},\dots,\theta_{j_n})dX\nonumber\\
&&\qquad\qquad=\frac{1}{(2\pi i)^n} \int_{\mathcal
C^n}J(z_1,\dots,z_n;) f(iz_1,\dots,iz_n)~dz_1\dots dz_n.
\end{eqnarray}

Let $\mathcal C_-$ denote the path along the left side of
$\mathcal C$ from $-\delta+\pi i$ down to $-\delta-\pi i$ and  let
$\mathcal C_+$ denote the path along the right side of $\mathcal
C$ from $\delta-\pi i$ up to $\delta+\pi i$.  Since the
periodicity of the function $f$ implies that the horizontal
segments of the contours cancel, each variable $z_j$ is on one or
the other of these two vertical paths. Thus, our expression is a
sum of $2^n $ terms, each term being an n-fold integral with each
integral on a vertical line segment either $\mathcal C_-$ or
$\mathcal C_+.$ For each variable $z_j$ which is on $\mathcal C_-$
we use the functional equation (\ref{eqn:fe}) to replace $
e^{-z_j}\frac{\Lambda_X'}{\Lambda_X}(e^{-z_j}) $  by
$N-e^{z_j}\frac{\Lambda_{X^*}'}{\Lambda_{X^*}}(e^{z_j}). $ In this
way we find that
\begin{eqnarray}&&
\frac{1}{(2\pi i)^n}\int_{\mathcal C^n}J(z_1,\dots,z_n;)
f(iz_1,\dots,iz_n)~dz_1\dots dz_n\nonumber\\
&&  \qquad = \frac{1}{(2\pi
i)^n}\sum_{\epsilon_j\in\{-1,+1\}}\int_{\mathcal
C_{\epsilon_1}}\dots \int_{\mathcal C_{\epsilon_n}}\int_{U(N)}
\;(-1)^n\; \prod_{j=1}^n \left(\frac{1-\epsilon_j}{2}N+\epsilon_j
e^{-\epsilon_j z_j}\frac{\Lambda_{X^{\epsilon_j}}'}
{\Lambda_{X^{\epsilon_j}}}
(e^{-\epsilon_j z_j})\right)\\
&&\qquad \qquad \times f(iz_1,\dots,iz_n)~dXdz_1\dots
dz_n.\nonumber
\end{eqnarray}

Another way to write this equation is
\begin{eqnarray}&&
\frac{1}{(2\pi i)^n}\int_{\mathcal C^n}J(z_1,\dots,z_n;)
f(iz_1,\dots,iz_n)~dz_1\dots dz_n\nonumber\\
&&  \qquad= \frac{1}{(2\pi i)^n}\int_{U(N)}\;(-1)^n
\sum_{K\subset\{1,\dots,n\}} \prod_{j\in K} \int_{\mathcal
C_+}e^{-z_j}\frac{\Lambda_X'} {\Lambda_X} (e^{- z_j})
\prod_{j\notin K}\int_{\mathcal C_-}
  \left(N-e^{z_j}\frac{\Lambda_{X^*}'}
{\Lambda_{X^*}}
(e^{ z_j})\right)\\
&& \qquad \qquad \times f(iz_1,\dots,iz_n)dz_1\dots
dz_n~dX.\nonumber
\end{eqnarray}

The expansion of the product over $j\notin K$ can be easily
expressed as a sum over further subsets of $\{1,\ldots,n\}$.  We
have
\begin{eqnarray}&&
\frac{1}{(2\pi i)^n}\int_{\mathcal C^n}J(z_1,\dots,z_n;)
f(iz_1,\dots,iz_n)~dz_1\dots dz_n\nonumber\\
&&  \qquad= \frac{1}{(2\pi i)^n}\int_{U(N)}\;(-1)^n
\sum_{K+L+M=\{1,\dots,n\}} \prod_{j\in K} \int_{\mathcal
C_+}e^{-z_j}\frac{\Lambda_X'} {\Lambda_X} (e^{- z_j})\prod_{j\in
L}\int_{\mathcal C_-}
  (-1)e^{z_j}\frac{\Lambda_{X^*}'}
{\Lambda_{X^*}} (e^{ z_j})
  \\
&& \qquad \qquad \times \prod_{j\in M}\int_{\mathcal C_-}N
  f(iz_1,\dots,iz_n)~dz_1\dots ~dz_n~dX.\nonumber
\end{eqnarray}
Using this last equation, (\ref{eqn:basic}) and the definition of
$J(A;B)$ from Theorem \ref{theo:J}, we have the statement of the
Theorem.

\end{proof}



\subsection{$n$-correlation theorem}

We will now prove our main theorem.

\begin{theorem}   Let $J^*$ be as defined in Theorem \ref{theo:J}. Then
\label{theo:main}
\begin{eqnarray}&&\int_{U(N)}\sideset{}{^*}\sum_{1\le j_1,\dots , j_n\le N} f(\theta_{j_1},\dots,\theta_{j_n})
dX_N\nonumber
\\
&&\qquad =\frac{1}{(2\pi )^n}\int_{[0,2\pi]^n} \sum_{K+L+M=
\{1,\dots,n\}} N^{|M|}
  J^*(-i\theta_K;i\theta_L)
f(\theta_1,\dots,\theta_n)~d\theta_1\dots ~d\theta_n
\end{eqnarray}
where $i\theta_L=\{i\theta_\ell:\ell\in L\}$,
$-i\theta_K=\{-i\theta_k:k\in K\}$ and the star on the sum
indicates summation over distinct indices.  Moreover, the
integrand has no poles on the path of integration.
\end{theorem}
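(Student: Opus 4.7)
I would begin from Theorem \ref{theo:offtheline}, which holds for any small $\delta > 0$, and replace $J$ by $J^*$ in the integrand; this substitution is legitimate because on $\mathcal{C}_+$ the variables $z_k$ have $\Re z_k = \delta > 0$ and on $\mathcal{C}_-$ the variables $-z_\ell$ satisfy $\Re(-z_\ell) = \delta > 0$, so the positivity hypothesis of Theorem \ref{theo:J} is met. Parametrizing $\mathcal{C}_+$ by $z_j = \delta + it_j$ and $\mathcal{C}_-$ by $z_j = -\delta + it_j$, and then setting $\theta_j = -t_j$, the reversed orientation of $\mathcal{C}_-$ absorbs the sign $(-1)^{|L|+|M|}$ in (\ref{eq:offtheline}), the measure $(2\pi i)^{-n}dz$ becomes $(2\pi)^{-n}d\theta$, and $J^*(z_K; -z_L)$ becomes $J^*(-i\theta_K; i\theta_L)$. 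The naive $\delta \to 0$ limit of the RHS of Theorem \ref{theo:offtheline} then has exactly the form of the RHS of Theorem \ref{theo:main}, and what remains is to justify the limit and to account for the passage from the non-distinct sum to the distinct sum on the left.

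\textbf{Regularity of the summed integrand.} I would next verify the parenthetical claim by showing that each simple pole of a single term $J^*(-i\theta_K; i\theta_L)$ along a diagonal $\theta_k = \theta_\ell$ --- which by Lemma \ref{lem:residue} can arise only when $k \in K, \ell \in L$ or vice versa, since only indices in $K \cup L$ appear as arguments of $J^*$ --- cancels in the sum over $K+L+M$. Fixing the pair $(k,\ell)$, Lemma \ref{lem:residue} identifies the residue of the partition with $k \in K, \ell \in L$ as the combination $N J^*(A';B') + J^*(A';B) + J^*(A'+\{-\beta^*\};B')$, and these three pieces match, with opposite sign, residues coming from partitions in which $k$ or $\ell$ has been moved to $M$ (producing the $N^{|M|}$ factor in the integrand) or exchanged between $K$ and $L$. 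A four-way pairing of partitions, analogous to the cases P1--P4 in the proof of Lemma \ref{lem:residue}, produces the cancellation; higher-order coincidences $\theta_{k_1}=\cdots=\theta_{k_r}$ follow by iterating this pairing.

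\textbf{From non-distinct to distinct, and the main obstacle.} Although the summed integrand is regular, the individual contour integrals in Theorem \ref{theo:offtheline} carry opposite $\pm i\delta$ prescriptions at each pole $z_k = z_\ell$, and their $\delta \to 0$ limits differ by Sokhotski--Plemelj residue contributions computed once again by Lemma \ref{lem:residue}. Proceeding by induction on $n$, these residues, summed across all partitions $K+L+M$ and all pairs $(k,\ell)$, reassemble --- via the $n-1$ case of Theorem \ref{theo:main} itself applied to the remaining variables --- into exactly the diagonal contributions that force $\theta_\ell = \theta_k$, i.e., the difference between the non-distinct sum and the distinct sum. Subtracting these yields the distinct sum as the integral of the (regular) summed integrand. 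The hardest step is this inductive combinatorial reassembly: one must track how each of the three residue pieces from Lemma \ref{lem:residue} combines with pieces from partitions of different shape to rebuild the theorem at size $n-1$. The pole-cancellation argument of the second paragraph is a comparatively mechanical four-way pairing, but the residue-to-diagonal identification is a multi-level induction that must be organized carefully to avoid double-counting when three or more indices coincide.
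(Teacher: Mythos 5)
Your route is essentially the paper's: start from Theorem \ref{theo:offtheline} (where $J=J^*$ because all contours are off the axis), deform the contours onto the imaginary axis, collect the residues via Lemma \ref{lem:residue}, and identify them inductively with the diagonal contributions that separate the unrestricted sum from the distinct one. The device you are missing for the ``hardest step'' is the paper's interpolating family: $\sum^{n,R}$ denotes the sum with distinctness imposed only among $j_{R+1},\dots,j_n$, and $I^{n,R}_{f;K,L,M}$ the integral with the first $R$ contours still off the axis; Theorem \ref{theo:main1} asserts the identity for every pair $(n,R)$, and one moves a single contour ($z_R$) at a time. Shifting $z_R$ produces residues at $z_R=z_t$ only for $t>R$ (contours with $t<R$ are still on the opposite side of the axis and yield no pole), and the three terms of Lemma \ref{lem:residue}, after relabelling, reassemble into the full sum over partitions of $\{1,\dots,n-1\}$ applied to $g_t$, which by the induction hypothesis in $n$ equals exactly the diagonal contribution $j_R=j_t$. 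This strict ordering is what forestalls the double counting you worry about when three or more indices coincide. One genuine error in your second paragraph: partitions with $k$ or $\ell$ in $M$ contribute no pole at $\theta_k=\theta_\ell$ whatsoever (those variables simply do not appear as arguments of $J^*$), so they cannot supply cancelling residues, and there is no four-way pairing. The cancellation is the two-way pairing between the configurations $k\in K,\ \ell\in L$ and $k\in L,\ \ell\in K$, whose residues cancel by $\operatorname{Res}_{s=x}f(s,x)=-\operatorname{Res}_{s=x}f(x,s)$; higher-order coincidences are not handled by iterating this pairing but by noting that the putative singular set would then have complex dimension less than $n-1$, which forces it to be empty (Krantz, Corollary 7.3.2).
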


Note the similar forms of Theorem \ref{theo:offtheline} and
Theorem \ref{theo:main}.  In the former the sum is over all
indices and the integrals are on paths slightly shifted away from
the imaginary axis and in the latter the sum is over distinct
indices and the integration is along the imaginary axis.  Moving
the integrals onto the imaginary axis results in some principal
value terms, and surprisingly these cancel exactly with extra
terms in the sum in \ref{theo:offtheline}.

We actually prove a more general theorem (Theorem \ref{theo:main1}
below). We start with a little notation: For a given $n$ and $0\leq
R\leq n$, let the sum $\sum_{j_1,\ldots,j_n=1}^N$ with the
additional condition that $j_m\neq j_{\ell}$ if $m>R$ {\em and}
$\ell>R$ be denoted by $\sum^{n,R}$.  If we additionally fix three
disjoint sets $K$, $L$ and $M$ whose union is $\{1,2,\ldots,n\}$,
then we introduce the following notation for the familiar integral
\begin{eqnarray}\label{eq:Jfdef}
&&\int_{-\pi i}^{\pi i} \cdots \int_{-\pi i}^{\pi i}
\int_{C_+^{K\cap \{1,\ldots,R\}}} \int_{C_-^{(L + M)\cap
\{1,\ldots,R\}}} J^*(z_K;-z_L)f(iz_1,\ldots,iz_n) dz_1\cdots
dz_R\;dz_{R+1} \ldots dz_{n}\nonumber \\
&&\qquad =:I_{f;K,L,M}^{n,R}.
\end{eqnarray}
Once again, the integrals on the imaginary axis are principal
value integrals.

We have already derived equation (\ref{eq:offtheline}).  In the
new notation this is written as
\begin{eqnarray}
\label{eq:offtheline2} &&(2\pi i)^n \int_{U(N)} \sum\!^{n,n}
f(\theta_{j_1},\ldots,\theta_{j_n}) dX = \sum_{K+L+M=\{1,\ldots,n\}}
(-1)^{|L + M|}N^{|M|}I^{n,n}_{f;K,L,M}.
\end{eqnarray}
Note that (\ref{eq:offtheline}) features $J$ whereas
$I^{n,R}_{f;K,L,M}$ is defined in terms of $J^*$.  However, when
$R=n$ (that is, all the integrals are off the imaginary axis)
Theorem \ref{theo:J} says that $J$ and $J^*$ are equal.


With the help of Lemma \ref{lem:residue} we will prove the
following:
\begin{theorem}
\label{theo:main1} Using the notation of (\ref{eq:Jfdef}) and the
preceding paragraph, with $0\leq R \leq n$,
\begin{eqnarray}
\label{eq:main1} &&(2\pi i)^n \int_{U(N)} \sum\!^{n,R}
f(\theta_{j_1},\ldots,\theta_{j_n}) dX = \sum_{K+L+M=\{1,\ldots,n\}}
(-1)^{|(L + M)\cap\{1,\ldots,R\}|}N^{|M|}I^{n,R}_{f;K,L,M}.
\end{eqnarray}
\end{theorem}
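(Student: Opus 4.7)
The plan is to prove Theorem \ref{theo:main1} by downward induction on $R$, moving one variable at a time from an off-axis contour onto the imaginary axis. The base case $R=n$ is exactly Theorem \ref{theo:offtheline}, after noting that $J=J^*$ under the positivity conditions that hold when every integration contour is $\mathcal{C}_\pm$.

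For the inductive step from $R$ to $R-1$, I would, inside each summand of the right-hand side at level $R$, deform the $z_R$-contour onto the imaginary axis as a principal-value integration. Three subcases arise. If $R\in M$ then $J^*(z_K;-z_L)$ is independent of $z_R$ and no poles are crossed; the deformation merely reverses orientation on $\mathcal{C}_-$, producing exactly the sign flip that converts $(-1)^{|(L+M)\cap\{1,\dots,R\}|}$ into $(-1)^{|(L+M)\cap\{1,\dots,R-1\}|}$. If $R\in K$ (the case $R\in L$ being symmetric), the integrand has simple poles at each $z_R=z_\ell$ with $\ell\in L$; only those with $\ell>R$ are actually crossed by the deformation (for $\ell\le R-1$ the partner $z_\ell$ still sits off axis), so they contribute half-residues of size $\pm\pi i\,\mathrm{Res}\,J^*$. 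By Lemma \ref{lem:residue} each such residue decomposes as $NJ^*(A';B')+J^*(A';B)+J^*(A'+\{-\beta^*\};B')$.

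The heart of the argument, and the step I expect to be the main obstacle, is to verify that when these three pieces are summed over all partitions $(K,L,M)$ and all $\ell>R$, they reassemble into precisely the diagonal correction $(2\pi i)^n\int_{U(N)}[\sum^{n,R}-\sum^{n,R-1}]f\,dX$, rewritten via (\ref{eqn:basic}) in the $n-1$ variables that remain after identifying $\theta_{j_R}=\theta_{j_\ell}$. Viewing the result as a partition $(K',L',M')$ of $\{1,\dots,n\}\setminus\{R\}$, the intended matching sends the $NJ^*(A';B')$ piece to configurations with $\ell\in M'$ (explaining the new factor of $N$), the $J^*(A';B)$ piece to $\ell\in L'$, and the $J^*(A'+\{-\beta^*\};B')$ piece to $\ell\in K'$. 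The tricky bookkeeping is to reconcile all signs simultaneously --- $(-1)^{|(L+M)\cap\{1,\dots,R\}|}$, the $\pm\pi i$ from each half-residue, the orientation reversal on $\mathcal{C}_-$, and the $(2\pi i)^{-n}$ prefactor --- and to check that the double sum over partitions and over $\ell$ genuinely telescopes into the new $R-1$ formula. Once this combinatorial identity is secured, the induction closes at $R=0$; the ancillary ``no poles on the path'' claim then follows from the observation that the simple poles of individual $J^*(-i\theta_K;i\theta_L)$ at coincidences $\theta_k=\theta_\ell$ cancel across different partitions by the same three-term decomposition of Lemma \ref{lem:residue}, in the spirit of the cancellation noted in Remark 2.
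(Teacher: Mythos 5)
Your plan follows the paper's proof almost exactly: downward induction in $R$ from the base case $R=n$ supplied by Theorem \ref{theo:offtheline} (where $J=J^*$), half-residue contributions $\pi i\operatorname{Res}$ at $z_R=z_t$ for $t>R$ precisely when $R$ and $t$ lie one in $K$ and one in $L$, the three-term decomposition of each residue via Lemma \ref{lem:residue}, and the reassembly that routes the $NJ^*(A';B')$, $J^*(A';B)$ and $J^*(A'+\{-\beta^*\};B')$ pieces to $t\in M'$, $t\in L'$ and $t\in K'$ respectively; the sign and orientation bookkeeping you flag also works out exactly as you describe.

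The one point where your sketch has a gap is the claim that the reassembled residue sum is identified with the diagonal correction ``via (\ref{eqn:basic})''. After relabelling, the residue contributions take the form $2\pi i\sum_{t>R}\sum_{K+L+M=\{1,\ldots,n-1\}}(-1)^{|(L+M)\cap\{1,\ldots,R-1\}|}N^{|M|}I^{n-1,R-1}_{g_t;K,L,M}$, i.e.\ they are the right-hand side of (\ref{eq:main1}) for $n-1$ variables at level $R-1$ --- a mixed expression with some contours already on the imaginary axis --- whereas (\ref{eqn:basic}) covers only the all-off-axis situation. To equate this with $(2\pi i)^{n-1}\int_{U(N)}\sum^{n-1,R-1}g_t\,dX$, which is exactly the diagonal correction $\sum^{n,R}-\sum^{n,R-1}$, you must invoke the theorem itself in $n-1$ variables at level $R-1$. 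Hence the induction cannot be on $R$ alone for fixed $n$: it must be a double induction, assuming the statement for $n-1$ and all $0\le R\le n-1$, with base cases $R=n$ for each $n$ and the directly verifiable case $n=1$. With that structural adjustment your argument is the paper's.
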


\begin{proof}  We will prove this by induction.  Assume that Theorem
\ref{theo:main1} holds for $n-1$ and any $0\leq R\leq n-1$.

We start with the right side of (\ref{eq:main1}) and move the
$z_R$ integral onto the imaginary axis, resulting in a principal
value integral and a residue at $z_R=z_t$, for $t>R$, in any term
where $R\in K$, $t\in L$ {\em or} $R\in L, t\in K$. A close
inspection of the integral and the form of $J^*(z_K;-z_L)$ reveals
that there is no pole unless $R$ and $t$ are in one of these two
configurations (see the comment in the final paragraph of Section
\ref{sect:ratiolambda}). Also, if $t<R$ then the contour on which
$z_t$ is integrated has not yet been moved and so it remains on
the far side of the imaginary axis from the $z_R$ contour and
hence does not yield a pole. Each residue contribution comes in
the form of the three terms in Lemma \ref{lem:residue}, multiplied
by $\pi i$. (It is $\pi i$ rather than $2\pi i$ because the $z_R$
contour is moving precisely onto the imaginary axis, where $z_t$
lies, yielding half the contribution of a contour completely
encircling the pole.) Thus
\begin{eqnarray}
\label{eq:indproof1} &&\sum_{K+L+M=\{1,\ldots,n\}} (-1)^{|(L+
M)\cap\{1,\ldots,R\}|}N^{|M|}I^{n,R}_{f;K,L,M}\nonumber \\
&&= \sum_{K+L+M=\{1,\ldots,n\}} (-1)^{|(L+
M)\cap\{1,\ldots,R-1\}|}N^{|M|}I^{n,R-1}_{f;K,L,M}\\
&&\qquad+2\times\sum_{t=R+1}^n \pi i \Big[
\sum_{K'+L'+M=\{1,\ldots,n\}-\{R,t\}} (-1)^{|(L' +
M)\cap\{1,\ldots,R-1\}|}N^{|M|} \nonumber \\
&&\qquad\qquad\times\int_{-\pi i}^{\pi i} \cdots \int_{-\pi
i}^{\pi i} \int_{C_+^{K\cap \{1,\ldots,R-1\}}} \int_{C_-^{(L+
M)\cap \{1,\ldots,R-1\}}}
\Big(J^*(z_{K'+\{t\}};-z_{L'})\nonumber\\
&&\qquad\qquad\qquad\qquad+J^*(z_{K'};-z_{L'+\{t\}})+NJ^*(z_{K'};-z_{L'})\Big)\nonumber\\
&&\qquad\qquad\times
f(iz_1,\ldots,iz_{R-1},iz_t,iz_{R+1},\ldots,iz_n) dz_1\cdots
dz_{R-1}\;dz_{R+1} \ldots dz_{n}\Big]\nonumber
\end{eqnarray}
The final sum above contains the two identical contributions from
the case $R\in K, t\in L$ and the case $R\in L, t\in K$.  To
confirm the sign of each term, if $R\in K$, the residue is
multiplied by $+i\pi$ because the contour of integration moves in
from the right of the imaginary axis (skirting the pole in the
positive direction) and the argument $z_R$ in $J^*(z_K;-z_L)$
occurs with a plus sign. Note that $(-1)^{|(L'+
M)\cap\{1,\ldots,R-1\}|}=(-1)^{|(L+ M)\cap\{1,\ldots,R\}|}$ if
$R\in K$ and $L=L'+\{t\}$.  On the other hand, if $R\in L$ then
the $z_R$ contour comes from the left of the imaginary axis, but
as $C_-$ is directed downwards, the pole is still circled in the
positive direction.  However, $z_R$ appears in $J^*(z_K;-z_L)$
with a minus sign, so the residue acquires an extra minus sign,
which is captured above because $(-1)^{|(L'+
M)\cap\{1,\ldots,R-1\}|}=(-1)\times(-1)^{|(L+
M)\cap\{1,\ldots,R\}|}$ if $L=L'+\{R\}$.

In the integrals in the final sum above we now relabel the
integration variables \linebreak
$z_1,z_2,\ldots,z_{R-1},z_{R+1},\ldots,z_n$ by
$z_1,z_2,\ldots,z_{n-1}$ so that
$f(z_1,\ldots,z_{R-1},z_t,z_{R+1},\ldots,z_t,\ldots,z_n)$ is
replaced by
$f(z_1,\ldots,z_{R-1},z_{t-1},z_R,\ldots,z_{t-1},\ldots,z_{n-1})$
$=:g_t(z_1,\ldots,z_{n-1})$. In addition, for some function $h$ of
sets $K,L$ and $M$,
\begin{eqnarray}
&&\sum_{K+L+M=\{1,\ldots,m-1\}}
(h(K+\{m\},L,M)+h(K,L+\{m\},M)+h(K,L,M+\{m\}))\nonumber
\\
&&\qquad\qquad\qquad=\sum_{K+L+M=\{1,\ldots,m\}} h(K,L,M),
\end{eqnarray}
so we now rewrite the three $J^*$ terms in the final sum in
(\ref{eq:indproof1}) as a sum over partitions of
$\{1,\ldots,n-1\}$. Thus (\ref{eq:indproof1}) equals
\begin{eqnarray}
\label{eq:indproof2} && \sum_{K+L+M=\{1,\ldots,n\}} (-1)^{|(L+
M)\cap\{1,\ldots,R-1\}|}N^{|M|}I^{n,R-1}_{f;K,L,M} \nonumber\\
&&\quad+2\pi i\sum_{t=R+1}^n  \sum_{K+L+M=\{1,\ldots,n-1\}}
(-1)^{|(L+ M)\cap\{1,\ldots,R-1\}|}N^{|M|}I_{g;K,L,M}^{n-1,R-1}.
\end{eqnarray}

By the induction hypothesis, this equals
\begin{eqnarray}
&& \sum_{K+L+M=\{1,\ldots,n\}} (-1)^{|(L+
M)\cap\{1,\ldots,R-1\}|}N^{|M|}I^{n,R-1}_{f;K,L,M} \nonumber\\
&&\quad+2\pi i\sum_{t=R+1}^n  (2\pi i)^{n-1}\int_{U(N)}
\sum\!^{n-1,R-1} g_t(\theta_{j_1},\ldots,\theta_{j_{n-1}})dX.
\end{eqnarray}

Note that the left side of (\ref{eq:main1}) can be written as
\begin{eqnarray}
&&(2\pi i)^n \int_{U(N)}\sum\!^{n,R-1}
f(\theta_{j_1},\ldots,\theta_{j_n})dX \nonumber\\
&&\qquad\qquad+ (2\pi i)^n\int_{U(N)}\sum_{t=R+1}^n
\sum\!^{n-1,R-1} g_t(\theta_{j_1},\ldots,\theta_{j_{n-1}})dX,
\end{eqnarray}
where the second sum incorporates all the terms where
$\theta_{j_R}=\theta_{j_t}$, $t>R$, and then uses the same
relabelling of the variables $\theta_{j_1},\theta_{j_2},\ldots,
\theta_{j_{R-1}},\theta_{j_{R+1}},\ldots,\theta_{j_n}$ and the
definition of $g_t$ as described before (\ref{eq:indproof2}).
Therefore
\begin{eqnarray}
&&(2\pi i)^n \int_{U(N)} \sum\!^{n,R-1}
f(\theta_{j_1},\ldots,\theta_{j_n}) dX \nonumber \\
&&\qquad\qquad= \sum_{K+L+M=\{1,\ldots,n\}} (-1)^{|(L+
M)\cap\{1,\ldots,R-1\}|}N^{|M|}I^{n,R-1}_{f;K,L,M}
\end{eqnarray}
and so, using the induction hypothesis, we have used
(\ref{eq:main1}) for a given $n$ and $R$ to deduce the same
expression for $n$ and $R-1$. Since in (\ref{eq:offtheline2}) we
have derived the expression for $R=n$ for any $n$, we have shown
that if (\ref{eq:main1}) is true for $n-1$, it is also true for $n$.
To justify the induction hypothesis in $n$, we consider $n=1$.
Equation (\ref{eq:offtheline}) states
\begin{equation}
2\pi i\int_{U(N)} \sum\!^{1,1}f(\theta_{j_1}) dX=
\sum_{K+L+M=\{1\}}(-1)^{|(L+ M)\cap \{1\}|} N^{|M|}
I^{1,1}_{f;K,L,M}=-NI_{f;\emptyset,\emptyset,\{1\}}^{1,1}.
\end{equation}
The final step above follows by remembering that
$J^*(\emptyset;z_A)=0$ for any nonempty set $A$. Since
$\sum^{1,1}=\sum^{1,0}$ and
$I_{f;\emptyset,\emptyset,\{1\}}^{1,1}=-I_{f;\emptyset,\emptyset,\{1\}}^{1,0}$,
it is immediate that
\begin{equation}
2\pi i\int_{U(N)} \sum\!^{1,0}f(\theta_{j_1}) dX=
\sum_{K+L+M=\{1\}}(-1)^{|(L+ M)\cap \emptyset|} N^{|M|}
I^{1,0}_{f;K,L,M}.
\end{equation}
This completes the proof of Theorem \ref{theo:main1}.
\end{proof}

It remains to verify that the integrand in Theorem \ref{theo:main}
has no poles on the path of integration.  We have already
confirmed in Lemma \ref{lem:residue} that each
$J^*(-i\theta_K;i\theta_L)$ has only a simple pole at
$\theta_k=-\theta_\ell$ for $\theta_k\in \theta_K$ and
$\theta_\ell\in \theta_L$.

We check that
\begin{eqnarray}\label{eq:singset}
\sum_{K+L+M=\{1,2,\ldots,n\}} N^M J^*(-i\theta_K;i\theta_L)
\end{eqnarray}
has no pole at $\theta_1=\theta_2$ for generic values of the
remaining variables.  A given $J^*(-i\theta_K;i\theta_L)$ only has
a pole when $\theta_1\in \theta_L$ and $\theta_2 \in \theta_K$, or
vice versa, so
\begin{eqnarray}
&&\operatornamewithlimits{Res}_{\theta_1=\theta_2}
\sum_{K+L+M=\{1,2,\ldots,n\}} N^M
J^*(-i\theta_K;i\theta_L) \nonumber \\
&&\qquad= \sum_{K+L+M=\{3,\ldots,n\}} N^M
\operatornamewithlimits{Res}_{\theta_1=\theta_2}\Big(J^*(-i\theta_K+\{-i\theta_1\}
;\{i\theta_2\}+i\theta_L) \nonumber \\
&&\qquad\qquad\qquad\qquad+ J^*(-i\theta_K+\{-i\theta_2\}
;\{i\theta_1\}+i\theta_L)\Big)=0;
\end{eqnarray}
this is zero because $\operatornamewithlimits{Res}_{s=x}f(s,x)=-
\operatornamewithlimits{Res}_{s=x}f(x,s)$.  Thus if
(\ref{eq:singset}) had a singular set it would be of complex
dimension less than $n-1$ and this implies that there is no
singular set (see for example \cite{kn:krantz}, Corollary 7.3.2).

Our new proof of $n$-correlation in the case of random matrix
theory is now complete.

\section{Correlations of the Riemann zeros}

Now we turn to the Riemann zeta-function. The goal is to obtain a
precise conjecture for the $n$-correlation of its zeros and we do
this following the method of the previous section for the random
matrix case.

\subsection{The ratios conjecture for the zeta-function}

 We derive our formula rigorously from the ratios
conjecture for the zeta-function, which we now state.

\begin{conjecture}[Ratios Conjecture \cite{kn:cfz2}]\label{conj:ratiozeta}
Let $Z_\zeta(A,B)=\prod_{\alpha\in A\atop\beta\in B}
\zeta(1+\alpha+\beta)$ and
\begin{eqnarray}Z_\zeta(A,B;C,D):=
\frac{Z_\zeta(A,B)Z_\zeta(C,D)}
{Z_\zeta(A,D)Z_\zeta(B,C)}.\end{eqnarray}
  Further, let
  \begin{eqnarray}\label{eq:Azeta}
  \mathcal{A}_\zeta(A,B;C,D)=\prod_p Z_{p}(A,B;C,D)\int_0^1\mathcal{A}_{p,\theta}(A,B;C,D)~d\theta
  \end{eqnarray}
  where
$z_p(x):=(1-p^{-x})^{-1}$, $Z_p(A,B)=\prod_{\alpha\in
A\atop\beta\in B} z_p(1+\alpha+\beta)^{-1}$ and
\begin{eqnarray}Z_p(A,B;C,D):=
\frac{Z_p(A,B)Z_p(C,D)} {Z_p(A,D)Z_p(B,C)}\end{eqnarray}
  and
\begin{eqnarray}
\label{eq:Aptheta} \mathcal{A}_{p,\theta}(A,B;C,D):= \frac
{\prod_{\alpha\in A} z_{p,-\theta}(\frac 12 +\alpha)
\prod_{\beta\in B}z_{p,\theta}(\frac 12 +\beta)}{ \prod_{\gamma\in
C}z_{p,-\theta}(\frac 12 +\gamma) \prod_{\delta\in
D}z_{p,\theta}(\frac 12 +\delta)}
\end{eqnarray}
with $z_{p,\theta}(x):=(1-e(\theta)p^{-x})^{-1}$. Then, provided
that $-\frac{1}{4}<\Re \alpha,\Re \beta<\frac{1}{4}$,
$\frac{1}{\log T} \ll \Re \gamma,\Re \delta<\frac{1}{4}$ and $\Im
\alpha,\Im\beta,\Im\gamma,\Im\delta\ll T$, we conjecture that,
with $s=\frac12+it$, for any interval $I\subset [-T,T]$,
\begin{eqnarray}
&&\int_I \frac{\prod_{\alpha\in A}\zeta(s+\alpha)\prod_{\beta\in
B} \zeta(1-s+\beta)}{\prod_{\gamma\in C}\zeta(s+\gamma)
\prod_{\delta\in D}\zeta(1-s+\delta)} ~dt  = \int_I \mathcal
R_{\zeta,t}(A,B;C,D) ~dt +O(|I|^{1/2+\epsilon})
\end{eqnarray}
where
\begin{eqnarray}
\mathcal R_{\zeta,t}(A,B;C,D)=\sum_{S\subset A,T\subset B\atop
|S|=|T|} X_t(S,T) Z_\zeta \mathcal{A}_\zeta(\overline{S}+
T^-,\overline{T}+ S^-;C,D).
\end{eqnarray}
Here $T^-$  means the set of all of the negatives of elements of
$T$ (i.e. $T^-:=\{-t:t\in T\}$), $A=S+\overline{S}$,
$B=T+\overline{T}$ and
\begin{eqnarray}
X_t(S,T)=\prod_{\hat{\alpha}\in
S}\chi(s+\hat{\alpha})\prod_{\hat{\beta}\in
T}\chi(1-s+\hat{\beta}),
\end{eqnarray}
where $\chi(1-s)=\chi(s)^{-1}=2(2\pi)^{-s}\Gamma(s)\cos \frac {\pi
s}2$ is the factor from the functional equation
$\zeta(s)=\chi(s)\zeta(1-s)$.
\end{conjecture}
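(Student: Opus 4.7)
The plan is to derive the formula via the \emph{CFKRS recipe} (see \cite{kn:cfz1,kn:cfz2}) adapted to ratios: expand each zeta-factor into a Dirichlet series, perform the $t$-average to extract the diagonal contributions, and identify what remains as the Euler product $Z_\zeta \mathcal{A}_\zeta$ weighted by the combinatorial factor $X_t(S,T)$. The structural match with the random matrix Theorem \ref{theo:rat} (under the dictionary $\zeta(1+x)\leftrightarrow z(x)$) is the guiding principle, and in fact the answer is essentially forced if one believes the recipe gives an answer of the same shape as in the unitary group case.

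More concretely, first I would replace each $\zeta(s+\alpha)$ with $\alpha\in A$ by a two-piece approximate functional equation, schematically $\sum_m m^{-s-\alpha}+\chi(s+\alpha)\sum_m m^{-(1-s-\alpha)}$, and similarly for each $\zeta(1-s+\beta)$ with $\beta\in B$. The denominator factors $1/\zeta(s+\gamma)$ and $1/\zeta(1-s+\delta)$, which are well-defined in the usual Dirichlet half-plane because $\Re\gamma,\Re\delta\gg 1/\log T$, are expanded directly as M\"obius-weighted Dirichlet series. Multiplying everything out produces $2^{|A|+|B|}$ sub-sums, indexed by the subsets $S\subset A$ and $T\subset B$ of variables on which the functional-equation swap has been applied. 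The phase collected on each sub-sum is exactly $X_t(S,T)$, while the swapped variables are effectively negated, which is how the sets $\overline{S}+T^-$ and $\overline{T}+S^-$ appear in the statement.

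I would then integrate over $t\in I$. The oscillating content of each sub-sum is a product of factors $(\cdots)^{\pm it}$ together with the $t$-dependence of $X_t(S,T)$, which by Stirling is of size $(t/2\pi)^{|S|-|T|}$ times a pure phase. Only the \emph{balanced} choices $|S|=|T|$ avoid producing an unbalanced power of $t$ that the recipe absorbs into the conjectured error, and only the diagonal terms in the Dirichlet sums survive the $t$-average. For a fixed balanced $(S,T)$, what remains is a multiplicative arithmetic sum over squarefree-weighted integers; a local calculation at each prime $p$ identifies the local Euler factor with $Z_p(A,B;C,D)\,\int_0^1\mathcal{A}_{p,\theta}(A,B;C,D)\,d\theta$ (the $\theta$-integral being the local Haar average over the unit circle at $p$, i.e.\ an average over a random local Satake-type phase), while the divergent product over primes supplies the global $\zeta$-factor $Z_\zeta$ coming from the poles at $1+\alpha+\beta=1$. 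Summing over balanced pairs $(S,T)$ gives the stated $\mathcal R_{\zeta,t}(A,B;C,D)$.

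The hard part, and the reason the statement is a conjecture rather than a theorem, is to control the discarded material by $O(|I|^{1/2+\epsilon})$. Two sources of error must be tamed: (i) the genuine off-diagonal terms in the Dirichlet expansions, whose smallness after $t$-averaging would require input at the level of the Hardy--Littlewood $k$-tuple conjectures for the primes appearing in the M\"obius expansions of the denominators; and (ii) the unbalanced sub-sums with $|S|\neq|T|$, which the recipe predicts are absorbed into the error but whose smallness cannot be established by any known method. In practice one does not attempt to bound these terms from first principles; one rather verifies internal consistency of the output (symmetry under the natural swap $A\leftrightarrow B$, $C\leftrightarrow D$, correct pole structure as $\alpha\to-\beta$ or $\gamma\to-\delta$, agreement with the rigorously proven random matrix analogue of Theorem \ref{theo:rat}, and agreement with the few known rigorous small-$|A|,|B|,|C|,|D|$ cases). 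It is in this sense that the conjectured formula is justified, and for the purposes of the present paper it is taken as a hypothesis from which the $n$-correlation formula will be deduced in exactly the same way as in the random matrix argument above.
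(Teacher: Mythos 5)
The statement is a conjecture, and the paper offers no proof of it --- it is taken as a hypothesis, with the derivation deferred to \cite{kn:cfz2} via the same recipe used for moment conjectures in \cite{kn:cfkrs}. Your sketch of that recipe (approximate functional equation on the numerator factors, M\"obius expansion of the denominators, retention of the balanced $|S|=|T|$ diagonal terms after $t$-averaging, identification of the local Euler factors, and the honest admission that the error term $O(|I|^{1/2+\epsilon})$ is unprovable by current methods) is exactly the approach of the cited source, so there is nothing to correct beyond the minor point that the unbalanced terms are discarded because the surviving $\chi$-factors oscillate in $t$ rather than because they carry an unbalanced power of $t$.
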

\begin{remark}Note
that since $|S|=|T|$, for small shifts $\hat{\alpha}$ and
$\hat{\beta}$ we have
\begin{eqnarray}\label{eq:chiapprox}
X_t(S,T)= e^{-\ell(\sum_{\hat\alpha\in S} \hat \alpha +\sum_{\hat
\beta\in T}\hat\beta)}\bigg(1+O(1/(1+|t|)\bigg),
\end{eqnarray}
where $\ell =\log\frac{t}{2\pi}$, which can sometimes be used to
simplify formulae.
\end{remark}

The method for constructing the ratios conjecture is detailed in
\cite{kn:cfz2} and is based on the same principles as the recipe
for generating conjectures for moments (see \cite{kn:cfkrs}) of
zeta and $L$-functions.  (Moments cover just the case where
$C=\{\emptyset\}$ and $D=\{\emptyset\}$.)

\begin{corollary}\label{cor:chis}
With the same conditions on $\alpha,\beta,\gamma$ and $\delta$ as
in Conjecture \ref{conj:ratiozeta}, and with conditions on $\mu$
the same as those on $\alpha$ and $\beta$, we have
\begin{eqnarray}
&&\int_I \frac{\prod_{\alpha\in A}\zeta(s+\alpha)\prod_{\beta\in
B} \zeta(1-s+\beta)}{\prod_{\gamma\in C}\zeta(s+\gamma)
\prod_{\delta\in D}\zeta(1-s+\delta)} \prod_{\mu \in  U}
\frac{\chi'}{\chi}(s+\mu)~dt \nonumber \\
&&\qquad= \int_I \mathcal R_{\zeta,t}(A,B;C,D)\prod_{\mu \in  U}
\frac{\chi'}{\chi}(s+\mu) ~dt +O(|I|^{1/2+\epsilon})
\end{eqnarray}
as a consequence of Conjecture \ref{conj:ratiozeta}.
\end{corollary}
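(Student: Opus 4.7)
The plan is to obtain the factors $\chi'/\chi(s+\mu)$ by applying Conjecture \ref{conj:ratiozeta} with augmented shift sets and then differentiating with respect to auxiliary parameters. The essential identity, obtained from logarithmic differentiation of the functional equation $\zeta(s) = \chi(s)\zeta(1-s)$, is
\begin{equation*}
\frac{\chi'}{\chi}(s+\mu) \;=\; \frac{\zeta'}{\zeta}(s+\mu) + \frac{\zeta'}{\zeta}(1-s-\mu).
\end{equation*}

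For each $\mu \in U$ one proceeds in two complementary steps, with the general case $|U|>1$ following by induction. To produce the first summand $\zeta'/\zeta(s+\mu)$, adjoin $\mu+h$ to $A$ and $\mu$ to $C$: the augmented integrand equals the original times $\zeta(s+\mu+h)/\zeta(s+\mu)$, which equals $1$ at $h=0$. Apply the conjecture uniformly for $h$ in a small disk (legitimate when $\Re\mu > 1/\log T$, so that $\mu$ is an admissible $C$-shift) and differentiate both sides at $h=0$, controlling the derivative of the error $O(|I|^{1/2+\epsilon})$ by a Cauchy estimate on the disk. On the left this produces the original integrand times $\zeta'/\zeta(s+\mu)$; on the right one obtains the derivative of $\mathcal{R}_{\zeta,t}(A+\{\mu+h\},B;C+\{\mu\},D)$. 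A parallel calculation on this derivative --- in each summand of $\mathcal{R}_{\zeta,t}$, the $Z_\zeta\mathcal{A}_\zeta$ piece exhibits the same cancellation at $h=0$ as the integrand, while summands with $\mu+h$ adjoined to $S$ contribute only through the pole--zero interaction between $-\mu \in S^-$ and $\mu \in \tilde{C}$ (via the $\zeta(1-h)\cdot\zeta(1-h)^{-1}$ behaviour) --- collapses the derivative to $\mathcal{R}_{\zeta,t}(A,B;C,D)\cdot\zeta'/\zeta(s+\mu)$. Symmetrically, adjoining $-\mu+h$ to $B$ and $-\mu$ to $D$ (valid for $\Re\mu < -1/\log T$) produces $\zeta'/\zeta(1-s-\mu)$ times $\mathcal{R}_{\zeta,t}(A,B;C,D)$. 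Summing the two identities, via the key functional-equation identity above, yields the desired $\chi'/\chi(s+\mu)$ factors on both sides.

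The main obstacles are: (i) the two augmentations are valid in disjoint regions $\Re\mu > 1/\log T$ and $\Re\mu < -1/\log T$, so the two partial identities must be analytically continued in $\mu$ to a common subregion of $-1/4<\Re\mu<1/4$; both sides are meromorphic in $\mu$ with matched singular loci (poles only at $\mu=-\alpha$ or $-\beta$ for $\alpha\in A,\beta\in B$), so the continuation is formal, but the uniformity of the error $O(|I|^{1/2+\epsilon})$ on compact subfamilies of shifts requires a separate check; and (ii) verifying the collapse of the derivative on the right-hand side of the conjecture to $\mathcal{R}_{\zeta,t}(A,B;C,D)$ times the relevant $\zeta'/\zeta$ factor, which requires organizing the sum over subsets $S\subset\tilde{A}$ according to whether $\mu+h\in S$ and carefully tracking the zero-of-$Z_\zeta(B,C)^{-1}$ at $h=0$ against the $\chi$-derivative produced by $X_t$. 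This latter step is the most technically delicate part of the argument, but it parallels exactly the cancellation mechanism already used on the integrand side, so its validity is assured by the matched structure of the two sides of the conjecture.
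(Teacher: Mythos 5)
Your approach is far more elaborate than the statement requires, and it rests on a step that I do not believe can be repaired. The factor $\prod_{\mu\in U}\frac{\chi'}{\chi}(s+\mu)$ is a \emph{deterministic}, slowly varying function of $t$ alone, satisfying $\frac{\chi'}{\chi}(s)\ll\log(2+|s|)$ and $\frac{d}{ds}\frac{\chi'}{\chi}(s)\ll 1/(1+|s|)$. Since Conjecture \ref{conj:ratiozeta} is asserted for \emph{every} interval $I\subset[-T,T]$, the function $E(u):=\int_{I\cap(-\infty,u]}\big(F(t)-\mathcal R_{\zeta,t}(A,B;C,D)\big)\,dt$ (where $F$ denotes the ratio of zeta functions in the integrand) satisfies $E(u)\ll|I|^{1/2+\epsilon}$ uniformly in $u$, and a single integration by parts, $\int_I(F-\mathcal R_{\zeta,t})\,\phi\,dt=E\phi\big|_{\text{endpoints}}-\int_I E\,\phi'\,dt$ with $\phi(t)=\prod_{\mu}\frac{\chi'}{\chi}(s+\mu)$, yields the corollary immediately from the two displayed bounds. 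That is the paper's entire proof; no representation of $\chi'/\chi$ in terms of $\zeta'/\zeta$ is needed, because $\phi$ is not a random quantity that has to be correlated against the integrand.

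The gaps in your route are these. First, your two augmentations are valid only in the disjoint regions $\Re\mu\gg 1/\log T$ and $\Re\mu\ll-1/\log T$, and an asymptotic identity carrying an error term $O(|I|^{1/2+\epsilon})$ does not analytically continue: the hypothesis $\Re\gamma,\Re\delta\gg1/\log T$ in Conjecture \ref{conj:ratiozeta} exists precisely because outside that region the displayed main term is no longer expected to approximate the integral (further swap terms involving the denominator shifts would enter), so there is no common subregion of $\mu$ in which both partial identities are simultaneously available; note also that the left-hand integrals are not even continuous in $\mu$ across $\Re\mu=0$, where poles of $\zeta'/\zeta(s+\mu)$ cross the contour. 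Moreover the corollary is claimed for all $-\frac14<\Re\mu<\frac14$, including $\Re\mu\le 0$, where even your first augmentation (placing $\mu$ into $C$) is inadmissible. Second, the asserted ``collapse'' of $\frac{d}{dh}\mathcal R_{\zeta,t}(A+\{\mu+h\},B;C+\{\mu\},D)\big|_{h=0}$ to $\mathcal R_{\zeta,t}(A,B;C,D)\cdot\frac{\zeta'}{\zeta}(s+\mu)$ is false term by term: the summands with $\mu+h\in S$ produce new $X_t$ factors and new $Z_\zeta$ pairings between $-\mu-h$ and the elements of $B$ --- this is exactly the computation of Theorem \ref{theo:Jzeta}, which shows that such derivatives have genuinely more structure than a product. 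Any cancellation could occur only after summing the two partial identities, and you offer no verification of it beyond an appeal to ``matched structure,'' which is where all of the actual content of your argument would lie.
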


\begin{proof}
This follows immediately by integration by parts using the fact
that
\begin{eqnarray}
&&\frac{\chi'}{\chi} (s) \ll \log(2+|s|)\quad \mbox{ and } \quad
\frac{d}{ds} \frac{\chi'}{\chi}(s)\ll \frac{1}{1+|s|}.
\end{eqnarray}
\end{proof}

\subsection{Averages of logarithmic derivatives of the Riemann
zeta function}

To determine the correlations of the Riemann zeros, we will need a
result about averaging logarithmic derivatives of the zeta
function:

\begin{theorem} \label{theo:Jzeta} Assuming the Ratios Conjecture, if $\Re \alpha_i,\Re
\beta_j>0$ for $\alpha_i\in A$ and $\beta_j \in B$ then
$J_{\zeta,I}(A;B;U)=J^*_{\zeta,I}(A;B;U)+O(|I|^{1/2+\epsilon})$ where for an interval $I$
\begin{eqnarray} \label{eq:defJzeta}
&&J_{\zeta,I}(A;B;U)\\&&\qquad :=\int_I \prod_{\alpha \in A}
\frac{\zeta'}{\zeta}(\tfrac 12 +it+\alpha)\:\prod_{\beta\in
B}\frac{\zeta'}{\zeta}(\tfrac 12 -it+\beta)\prod_{\mu\in
U}\Big(-\frac{\chi'}{\chi}(\tfrac 12 +it + \mu)\Big)~dt\nonumber
\end{eqnarray}
and
\begin{eqnarray}
&& J_{\zeta,I}^*(A;B;U) :=\int_I J_{\zeta,t}^*(A;B;U)~dt,
\end{eqnarray}
where
\begin{eqnarray}
 &&J_{\zeta,t}^*(A;B;U):=\sum_{S\subset A,T\subset B\atop
|S|=|T|}X_t(S,T)   \nonumber
\frac{Z_\zeta(S,T)Z_\zeta(S^-,T^-)}{{Z_\zeta}^{\dagger}(S,S^-){Z_\zeta}^{\dagger}(T,T^-)}\mathcal{A}_\zeta(T^-,S^-;S,T)
\\&& \qquad \qquad \qquad\qquad  \times
\sum_{{\overline{S}+ \overline{T}\atop = W_1+\dots + W_R}
}\prod_{r=1}^R \mathcal{H}_{S,T}(W_r) \times \prod_{\mu\in
U}\Big(-\frac{\chi'}{\chi}(1/2+it+\mu)\Big).
\end{eqnarray}
Here we use the notation $Z_{\zeta}$ as in Conjecture
\ref{conj:ratiozeta} and $Z^{\dagger}_\zeta(A,B)=\prod_{{\alpha\in
A\atop\beta\in B} \atop{\alpha+\beta\neq 0}}
\zeta(1+\alpha+\beta)$.   In addition,
$T^-:=\{-t:t\in T\}$, $A=S+\overline{S}$, $B=T+\overline{T}$ and
\begin{eqnarray}\label{eq:HHHH}
\mathcal H_{S,T} (W_r)= H_{\zeta;S,T}(W_r)-\sum_p
H_{p,1;S,T}(W_r)+\sum_p H_{p,2;S,T}(W_r).
\end{eqnarray}
Further, we have
\begin{equation}\label{eqn:H}
H_{\zeta,S,T}(W)=\left\{\begin{array}{ll} \sum_{\hat \alpha\in
S}\frac{\zeta'}{\zeta}(1+\alpha-\hat{\alpha})-\sum_{\hat\beta\in
T} \frac{\zeta'}{\zeta}(1+\alpha +\hat \beta) &\mbox{ if
$W=\{\alpha\}\subset \overline{S}$}
   \\
\sum_{\hat\beta\in T}\frac{\zeta'}{\zeta}(1+\beta-\hat
\beta)-\sum_{\hat\alpha\in S} \frac{\zeta'}{\zeta}
(1+\beta+\hat\alpha) &\mbox{ if  $W=\{\beta\}\subset \overline{T}$}\\
\left(\frac{\zeta'}{\zeta}\right)'(1+\alpha+\beta) & \mbox{ if
$W=\{\alpha,\beta\}$ with $
{\alpha \in \overline{S}, \atop \beta\in \overline{T}}$}\\
0&\mbox{ otherwise};
\end{array}
\right.
\end{equation}
\begin{equation}\label{eqn:HH}
H_{p,1;S,T}(W)=\left\{\begin{array}{ll} \sum_{\hat \alpha\in
S}\frac{z_p'}{z_p}(1+\alpha-\hat{\alpha})-\sum_{\hat\beta\in T}
\frac{z_p'}{z_p} (1+\alpha+\hat \beta) &\mbox{ if
$W=\{\alpha\}\subset \overline{S}$}
   \\
\sum_{\hat\beta\in T}\frac{z_p'}{z_p}(1+\beta-\hat
\beta)-\sum_{\hat\alpha\in S} \frac{z_p'}{z_p}
(1+\beta+\hat\alpha) &\mbox{ if  $W=\{\beta\}\subset \overline{T}$}\\
\left(\frac{z_p'}{z_p}\right)'(1+\alpha+\beta) & \mbox{ if
$W=\{\alpha,\beta\}$ with $
{\alpha \in \overline{S}, \atop \beta\in \overline{T}}$}\\
0&\mbox{ otherwise};
\end{array}
\right.
\end{equation}
and
\begin{eqnarray}
   H_{p,2,S,T}(W)
&&  =\sum_{W =\sum_{j=1}^J X_j}(-1)^{J-1}(J-1)!\prod_{j=1}^J
c_{S,T}(X_j)
\end{eqnarray}
with
\begin{eqnarray}
c_{S,T}(X) :=  \frac{\int_0^1  \mathcal
A_{p,\theta}(S,T)\prod_{\alpha\in \overline{S}\cap
X}\frac{z_{p,-\theta}'}{z_{p,-\theta}}(\frac
12+\alpha)\prod_{\beta\in \overline{T}\cap X}
\frac{z_{p,\theta}'}{z_{p,\theta}}(\frac 12+ \beta)
   ~d\theta}{\int_0^1 \mathcal A_{p,\theta}(S,T)~d\theta}
\end{eqnarray}
and the notation
\begin{eqnarray}
\mathcal A_{p,\theta}(S,T):=\mathcal A_{p,\theta}(T^-,S^-;S,T),
\end{eqnarray}
with $A_{p,\theta}(A,B;C,D)$ as in Conjecture
\ref{conj:ratiozeta}.
  \end{theorem}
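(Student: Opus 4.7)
The plan is to derive Theorem \ref{theo:Jzeta} from the Ratios Conjecture by differentiation, in direct analogy with the proof of Theorem \ref{theo:J} in the random matrix setting. Specifically, observe that
\begin{eqnarray*}
J_{\zeta,I}(A;B;U) = \left.\prod_{\alpha\in A}\frac{d}{d\alpha}\prod_{\beta\in B}\frac{d}{d\beta}\int_I \frac{\prod_{\alpha}\zeta(s+\alpha)\prod_{\beta}\zeta(1-s+\beta)}{\prod_{\gamma}\zeta(s+\gamma)\prod_{\delta}\zeta(1-s+\delta)}\prod_{\mu\in U}\Bigl(-\tfrac{\chi'}{\chi}(s+\mu)\Bigr)dt\right|_{C=A,\,D=B},
\end{eqnarray*}
so Corollary \ref{cor:chis} lets me replace the integrand with $\mathcal R_{\zeta,t}(A,B;C,D)\prod_\mu(-\chi'/\chi)(s+\mu)$, up to the $O(|I|^{1/2+\epsilon})$ error that will propagate through. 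The differentiations commute with the sum over $S\subset A$, $T\subset B$ in $\mathcal R_{\zeta,t}$, so it suffices to carry them out term-by-term.

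Following the approach in the proof of Theorem \ref{theo:J}, for each pair $(S,T)$ I would split $C=C_S+C_{\overline S}$ and $D=D_T+D_{\overline T}$ so that the variables destined to collide with $S$, $T$ are separated from those destined to collide with $\overline S$, $\overline T$. The derivatives in the $S$ and $T$ variables are forced to hit the $1/Z_\zeta(S^-,C_S)$ and $1/Z_\zeta(T^-,D_T)$ factors, since $\zeta(1+x)$ has a simple pole at $x=0$ and any other placement of the derivatives yields zero after substitution. This produces the Ratios-analog prefactor $X_t(S,T) Z_\zeta(S,T)Z_\zeta(S^-,T^-)/[Z_\zeta^\dagger(S,S^-)Z_\zeta^\dagger(T,T^-)]$, together with the corresponding arithmetic factor $\mathcal A_\zeta(T^-,S^-;S,T)$ coming from the Euler-product part of $\mathcal A_\zeta$ (where the same ``pole-matching'' mechanism operates at each prime, since $z_p(1+x)^{-1}$ vanishes at $x=0$).

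For the remaining derivatives, in the variables of $\overline S$ and $\overline T$, I would apply Lemma \ref{lemma:diff} to the logarithm of the quantity in parentheses that evaluates to $1$ after substitution $C=A$, $D=B$. That logarithm decomposes as a sum of three pieces: a zeta piece $\log Z_\zeta(\cdots)$ giving $H_{\zeta;S,T}$ in \eqref{eqn:H}, an Euler-factor piece $\sum_p \log Z_p(\cdots)$ giving $-H_{p,1;S,T}$ in \eqref{eqn:HH}, and the more delicate piece $\sum_p \log \int_0^1 \mathcal A_{p,\theta}(\cdots)\,d\theta$. By Lemma \ref{lemma:diff}, the result of the differentiations is a sum over set partitions of $\overline S+\overline T$, with each part evaluated against the relevant partial derivative of the exponent; after recording that $H_\zeta$ and $H_{p,1}$ vanish on parts of size $\geq 3$, this produces the claimed sum $\sum_{\overline S+\overline T = W_1+\cdots+W_R}\prod_r \mathcal H_{S,T}(W_r)$.

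The main obstacle is identifying the $H_{p,2}$ piece with the cumulant expression given in the statement. Partial derivatives of $\log\int_0^1 \mathcal A_{p,\theta}\,d\theta$ with respect to a set $W$ of variables are precisely the joint cumulants of the random variables $\{\prod_{\alpha\in \overline S\cap X} z_{p,-\theta}'/z_{p,-\theta}(\tfrac12+\alpha)\prod_{\beta\in \overline T\cap X} z_{p,\theta}'/z_{p,\theta}(\tfrac12+\beta)\}$ under the measure $\mathcal A_{p,\theta}(S,T)\,d\theta / \int_0^1 \mathcal A_{p,\theta}(S,T)\,d\theta$, since each factor $\mathcal A_{p,\theta}$ in the integrand is differentiated once per variable to yield $c_{S,T}(X_j)$'s. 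The standard moment-to-cumulant inversion formula $\kappa_n = \sum_{W=\sum X_j}(-1)^{J-1}(J-1)!\prod_j c_{S,T}(X_j)$ then gives exactly the stated form of $H_{p,2;S,T}(W)$. Once this cumulant identity is in hand, the verification that the summands assemble into $J^*_{\zeta,t}(A;B;U)$ is combinatorial bookkeeping identical to that of Theorem \ref{theo:J}.
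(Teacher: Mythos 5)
Your proposal is correct and follows essentially the same route as the paper: differentiate the Ratios Conjecture (via Corollary \ref{cor:chis}), force the $S,T$-derivatives onto the vanishing factors $1/Z_\zeta(S^-,C_S)$ and $1/Z_\zeta(T^-,D_T)$ to produce the prefactor, and then apply Lemma \ref{lemma:diff} to the three-part logarithm, with the $\log\int_0^1\mathcal A_{p,\theta}\,d\theta$ piece yielding the moment-to-cumulant expression for $H_{p,2;S,T}$ exactly as in the paper's Lemma \ref{lem:lemma3}. One small correction: the arithmetic factor $\mathcal A_\zeta$ is regular on the diagonal (no pole-matching occurs at the primes), so it is not hit by the $S,T$-derivatives but simply evaluates, via the identity (\ref{eqn:Aid}) together with the $e^H$ prefactor of Lemma \ref{lemma:diff}, to $\mathcal A_\zeta(T^-,S^-;S,T)$.
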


To prove this we want to differentiate $\mathcal R_{\zeta,t}$ with
respect to all of the $\alpha\in A$ and $\beta\in B$ and then
replace each $\gamma$ by an $\alpha$ and each $\delta$ by a
$\beta$. In what follows $A$ and $C$ have the same cardinality, as
do $B$ and $D$. Thus, after differentiation, when we want to set
the $\gamma$ equal to the $\alpha$ in some order, and the $\delta$
equal to the $\beta$ in some order, we can abbreviate this by
$C=A$ and $D=B$. With the definition of $J_{\zeta,I}(A;B;U)$ as in
(\ref{eq:defJzeta}), and using Corollary \ref{cor:chis}, we have
\begin{eqnarray}
&&J_{\zeta,I}(A;B;U)\\&&\qquad=\int_I\prod_{\alpha\in A \atop
\beta \in B}\frac{d}{d\alpha}\frac{d}{d\beta} \mathcal
R_{\zeta,t}(A,B;C,D)\bigg|_{C=A\atop D=B}\prod_{\mu\in
U}\Big(-\frac{\chi'}{\chi}(1/2+it+\mu)\Big)~dt+O(|I|^{1/2+\epsilon}).\nonumber
\end{eqnarray}

The situation is much as in the random matrix theory case, except
that now we have to understand how to include the arithmetical
factor $\mathcal A_\zeta$. For a start, we can differentiate with
respect to the $\hat\alpha\in S$ and $\hat \beta\in T$ as before.
$C_{\overline{S}}$ and $D_{\overline{T}}$ are defined as in the
proof of Theorem \ref{theo:J}. We have
\begin{eqnarray}\label{eq:halfderiv}
&& J_{\zeta,I}(A;B;U) = \int_I \prod_{\mu\in
U}\Big(-\frac{\chi'}{\chi}(1/2+it+\mu)\Big)\sum_{S\subset
A,T\subset B\atop |S|=|T|}X_t(S,T)
\frac{Z_\zeta(S,T)Z_\zeta(S^-,T^-)}{{Z_\zeta}'(S,S^-){Z_\zeta}'(T,T^-)}
\\&& \qquad \times  \nonumber
\prod_{\alpha\in \overline{S}\atop \beta\in
\overline{T}}\frac{d}{d\alpha}\frac{d}{d\beta}\Bigg(
\frac{Z_\zeta(\overline{S},\overline{T})Z_\zeta(\overline{S},S^-)Z_\zeta(\overline{T},T^-)Z(C,D)}
{Z_\zeta(S,T)Z_\zeta(C_{\overline{S}},S^-)Z_\zeta(D_{\overline{T}},T^-)
Z_\zeta(\overline{S},D)Z_\zeta(\overline{T},C)
  }\nonumber \\
  &&\qquad\qquad\qquad\qquad\qquad\times\mathcal{A}_\zeta(\overline{S}+ T^-,\overline{T}+ S^-;C,D)
  \Bigg)\bigg|_{C=A\atop D=B}~dt +O(|I|^{1/2+\epsilon}) .\nonumber\end{eqnarray}

In anticipation of applying Lemma \ref{lemma:diff}, we note that a
brief calculation shows that
\begin{eqnarray}  \label{eqn:Aid} \nonumber
&&\frac{Z_\zeta(\overline{S},\overline{T})Z_\zeta(\overline{S},S^-)
Z_\zeta(\overline{T},T^-)Z(C,D)}
{Z_\zeta(S,T)Z_\zeta(C_{\overline{S}},S^-)Z_\zeta(D_{\overline{T}},T^-)
Z_\zeta(\overline{S},D)Z_\zeta(\overline{T},C)
  }\mathcal{A}_\zeta(\overline{S}+ T^-,\overline{T}+ S^-;C,D)\bigg|_{C=A\atop
  D=B}\\&&  \qquad \qquad
=\mathcal{A}_\zeta(T^-,S^-;S,T).
\end{eqnarray}

The remainder of the proof of Theorem \ref{theo:Jzeta} consists of
applying Lemma \ref{lemma:diff} with (keeping just the factors
from the big brackets in (\ref{eq:halfderiv}) that depend on
$\overline{S}$ and $\overline{T}$)
\begin{eqnarray}
H&=&\log Z_\zeta(\overline{S},\overline{T})+\log
Z_\zeta(\overline{S},S^-)+\log Z_\zeta(\overline{T},T^-)-\log
Z_\zeta(\overline{S},D) -\log Z_\zeta(\overline{T},C) \nonumber
\\
&&+\sum_p \big(\log Z_p(\overline{S},\overline{T})+\log
Z_p(\overline{S},S^-)+\log Z_p(\overline{T},T^-)-\log
Z_p(\overline{S},D) -\log Z_p(\overline{T},C) \\
&&\qquad\qquad+ \log \int_0^1 \mathcal
A_{p,\theta}(\overline{S}+T^-,\overline{T}+S^{-};C,D)d \theta
.\nonumber
\end{eqnarray}
Note the exponent $-1$ in the definition of $Z_p(A,B)$ in
Conjecture \ref{conj:ratiozeta} which accounts for the minus sign
in front of $H_{1,p,S,T}(W_r)$ in the definition in
(\ref{eq:HHHH}) of $H_{S,T}(W_r)$.

Now it just remains to prove:
\begin{lemma} \label{lem:lemma3}
Let $W\subset \overline{S}+ \overline{T}$. Then
\begin{eqnarray}\label{eq:hp2}
  \nonumber H_{p,2,S,T}(W)&:=& \left.\prod_{w\in W}\frac{d}{dw} \log\left(
\int_0^1\mathcal A_{p,\theta}(\overline{S}+ T^-,\overline{T}+ S^-
;C,D)~d\theta\right)\right|_{C=\overline{S}+ S
\atop D=\overline{T}+ T}\\
&  =&\sum_{W =\sum_{j=1}^J X_j}(-1)^{J-1}(J-1)!\prod_{j=1}^J
c_{S,T}(X_j);
\end{eqnarray}
here
\begin{eqnarray}\label{eq:cST}
c_{S,T}(X) :=  \frac{\int_0^1  \mathcal
A_{p,\theta}(S,T)\prod_{\alpha\in \overline{S}\cap
X}\frac{z_{p,-\theta}'}{z_{p,-\theta}}(\frac
12+\alpha)\prod_{\beta\in \overline{T}\cap X}
\frac{z_{p,\theta}'}{z_{p,\theta}}(\frac 12+ \beta)
   ~d\theta}{\int_0^1 \mathcal A_{p,\theta}(S,T)~d\theta}
\end{eqnarray}
where we have adopted the notation
\begin{eqnarray}
\mathcal A_{p,\theta}(S,T):=\mathcal A_{p,\theta}(T^-,S^-;S,T).
\end{eqnarray}
Further,   we have, for $\alpha^*\in S$, $\beta^*\in T$,
$S'=S-\{\alpha^*\}$, $T'=T-\{\beta^*\}$ and $W\subset
\overline{S}+\overline{T}$,
   \begin{eqnarray}&&\label{eq:cSTderiv}
\nonumber\frac{d}{d\alpha^*}c_{S,T}(W)\bigg|_{\alpha^*=-\beta^*}=-c_{S',T'}(W+\{\alpha^*\})|_{\alpha^*=-\beta^*}
-c_{S',T'}(W+
\{\beta^*\})\\
&&\quad \qquad
+c_{S',T'}(W)c_{S',T'}(\{\alpha^*\})|_{\alpha^*=-\beta^*}+c_{S',T'}(W)c_{S',T'}(\{\beta^*\})
\end{eqnarray}
and
\begin{eqnarray} \label{eqn:dHp2}
 && \frac{d}{d\alpha^*}H_{p,2;S,T}(W)\bigg|_{\alpha^*=-\beta^*}= \nonumber \\
  &&\qquad\qquad-H_{p,2,S',T'}(W+
\{\alpha^*\})|_{\alpha^*=-\beta^*}-H_{p,2,S',T'}(W+ \{\beta^*\}).
\end{eqnarray}
\end{lemma}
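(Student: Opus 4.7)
The plan is to handle the three displays (\ref{eq:hp2}), (\ref{eq:cSTderiv}), (\ref{eqn:dHp2}) in order, treating (\ref{eq:hp2}) as an exponential/cumulant formula, (\ref{eq:cSTderiv}) as a direct quotient-rule computation, and (\ref{eqn:dHp2}) as a partition-combinatorial identity that follows from combining the first two.

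For (\ref{eq:hp2}), set $F(C,D) = \int_0^1 \mathcal{A}_{p,\theta}(\overline{S}+T^-,\overline{T}+S^-;C,D)\,d\theta$, so that the left side is $\prod_{w\in W}\frac{d}{dw}\log F\big|_{C=A,D=B}$. Lemma \ref{lemma:diff} applied to $F=e^{\log F}$ gives $\frac{1}{F}\prod_{w\in X}\frac{d}{dw}F = \sum_{\pi\text{ of }X}\prod_{B\in\pi}(\log F)^{(B)}$. Möbius inversion on the partition lattice then yields $(\log F)^{(W)} = \sum_{\pi\text{ of }W}(-1)^{|\pi|-1}(|\pi|-1)!\prod_{X\in\pi}\frac{F^{(X)}}{F}$. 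It remains to identify $F^{(X)}/F|_{C=A,D=B}$ with $c_{S,T}(X)$: differentiation passes under the integral and, before the substitution, $\alpha\in\overline{S}$ appears only in the numerator factor $z_{p,-\theta}(\tfrac12+\alpha)$ (its matching denominator factor indexed by $\gamma\in C_{\overline{S}}$ is independent of $\alpha$), so $\frac{d}{d\alpha}\mathcal{A}_{p,\theta}= \mathcal{A}_{p,\theta}\frac{z_{p,-\theta}'}{z_{p,-\theta}}(\tfrac12+\alpha)$. After evaluating at $C=A,D=B$ the excess factors cancel and one is left precisely with the numerator of $c_{S,T}(X)$ in (\ref{eq:cST}).

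For (\ref{eq:cSTderiv}), write $c_{S,T}(W) = N_W/D$ with $N_W = \int_0^1\mathcal{A}_{p,\theta}(S,T)P_W(\theta)\,d\theta$ and $P_W(\theta)$ the indicated product of $z_{p,\pm\theta}'/z_{p,\pm\theta}$ factors; note $\alpha^*\notin W$, so $P_W$ is independent of $\alpha^*$. A short calculation from the explicit form of $\mathcal{A}_{p,\theta}(S,T)$ gives $\frac{d}{d\alpha^*}\log\mathcal{A}_{p,\theta}(S,T) = -\frac{z_{p,\theta}'}{z_{p,\theta}}(\tfrac12-\alpha^*) - \frac{z_{p,-\theta}'}{z_{p,-\theta}}(\tfrac12+\alpha^*)$, and the crucial cancellation $\mathcal{A}_{p,\theta}(S,T)\big|_{\alpha^*=-\beta^*} = \mathcal{A}_{p,\theta}(S',T')$ holds because the offending $\alpha^*$-factors are inverses of the $\beta^*$-factors at $\alpha^*=-\beta^*$. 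At $\alpha^*=-\beta^*$ the two terms in the logarithmic derivative are exactly the factors that promote $P_W$ to $P_{W+\{\beta^*\}}$ and to $P_{W+\{\alpha^*\}}\big|_{\alpha^*=-\beta^*}$, so applying quotient rule to $N_W/D$ delivers the four terms of (\ref{eq:cSTderiv}) with the stated signs. The main thing to watch here is the sign bookkeeping, which is where I expect most of the computational mistakes to come from.

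For (\ref{eqn:dHp2}), differentiate the partition sum in (\ref{eq:hp2}) by the product rule, evaluate at $\alpha^*=-\beta^*$ using (\ref{eq:cSTderiv}), and observe that (\ref{eq:cSTderiv}) has the shape $\phi'(X_0) = -\psi(X_0) + \phi(X_0)\psi(\emptyset)$ where $\phi(X)=c_{S',T'}(X)$ and $\psi(X)=c_{S',T'}(X+\{\alpha^*\})|_{\alpha^*=-\beta^*}+c_{S',T'}(X+\{\beta^*\})$. The linear part in $\psi(X_0)$ collapses, via the standard bijection between partitions $\pi$ of $W+\{v\}$ and pairs $(Y,\pi'')$ with $Y\subseteq W$ and $\pi''$ a partition of $W\setminus Y$ (where $Y+\{v\}$ is the block containing $v$), into $-\big(H_{p,2,S',T'}(W+\{\alpha^*\})|_{\alpha^*=-\beta^*} + H_{p,2,S',T'}(W+\{\beta^*\})\big)$. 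The $\psi(\emptyset)$-term on the right cancels the $Y=\emptyset$ contribution from that bijection, because the bijection's $Y=\emptyset$ piece multiplies a partition of $W$ by an extra singleton block, giving a factor $(-1)^{|\pi|}|\pi|!$ that exactly matches the extra derivative on $\phi(X_0)$. This combinatorial matching is the main obstacle, and it is essentially a rewriting lemma in the partition lattice, not a new analytic input. Once verified, (\ref{eqn:dHp2}) is immediate and the proof of Lemma \ref{lem:lemma3} is complete.
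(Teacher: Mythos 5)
Your proposal is correct and follows essentially the same route as the paper's (very terse) proof: logarithmic differentiation of the $\theta$-integral for (\ref{eq:hp2}), direct quotient-rule differentiation of $c_{S,T}$ using the cancellation $\mathcal A_{p,\theta}(S,T)|_{\alpha^*=-\beta^*}=\mathcal A_{p,\theta}(S',T')$ for (\ref{eq:cSTderiv}), and the product rule plus the partition-block bijection for (\ref{eqn:dHp2}) — the last of which the paper dismisses as ``careful combinatorial accounting'' and you actually carry out, correctly matching the $\psi(\emptyset)$ contribution with the $Y=\emptyset$ blocks. The only quibble is the word ``cancels'' in the last step, where you mean (and subsequently verify) that the two contributions are equal rather than that they vanish.
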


\begin{proof}
The proof is simple, involving only differentiation.  The first
line, (\ref{eq:hp2}), follows from logarithmic differentiation of
the integral of $\mathcal A_{p,\theta}$, where each variable $w\in
W$ appears in just one place.  The equation (\ref{eq:cSTderiv})
also arises immediately by following the rules of differentiation.
Note that $\alpha^*$ appears in $\mathcal A_{p,\theta}(S,T)$ in
the numerator of (\ref{eq:Aptheta}) in a factor
$z_{p,\theta}(\frac 12 -\alpha^*)$ and in the denominator in a
factor of type $z_{p,-\theta}(\frac 12 +\alpha^*)$ and
$\alpha^*\notin W$.  The first term in (\ref{eq:cSTderiv}) comes
from the $z_{p,-\theta}(\frac 12 +\alpha^*)$ factor in the
numerator of (\ref{eq:cST}), the second term in
(\ref{eq:cSTderiv}) from the $z_{p,\theta}(\frac 12 -\alpha^*)$
and the final two terms in (\ref{eq:cSTderiv}) from the integral
in the denominator of (\ref{eq:cST}).  To obtain (\ref{eqn:dHp2})
note that the $\alpha^*$ appears in each factor of $c_{S,T}(X_j)$,
$j=1,\ldots,J$, in (\ref{eq:hp2}). Using the product rule on each
term in (\ref{eq:hp2}) we differentiate each $c_{s,T}(X_j)$ in
turn and sum the results. Note that
$c_{S,T}(X_j)\big|_{\alpha^*=-\beta^*}= c_{S',T'}(X_j)$. Using
(\ref{eq:cSTderiv}) and careful combinatorial accounting
(\ref{eqn:dHp2}) can be obtained.
\end{proof}

\begin{remark} We have allowed here some loose use of notation in
writing $H_{p,2,S',T'}(W+\{\alpha^*\})$.  The lemma starts out by
defining $H_{p,2,S,T}(W)$ where
$W\subset\overline{S}+\overline{T}$, but of course
$W+\{\alpha^*\}\notin \overline{S}+\overline{T}$ when $\alpha^*\in
S$. However to understand the notation
$H_{p,2,S',T'}(W+\{\alpha^*\})$ simply replace $S$ with $S'$ and
$T$ with $T'$ in the definition of $H_{p,2,S,T}$ and replace
$\overline{S}$ with $A-S'$ and $\overline{T}$ with $B-T'$.  A
similar comment applies to $c_{S',T'}(W+\{\alpha^*\})$ in
(\ref{eq:cSTderiv}).
\end{remark}

\subsection{Residue identity revisited}

Using Theorem \ref{theo:Jzeta},  which is an application of Lemma
\ref{lemma:diff} along much the same lines as Theorem
\ref{theo:J}, we now prove the analogue for $\zeta$ of Lemma
\ref{lem:residue}.
\begin{lemma} \label{lem:residuezeta} Suppose that $\alpha^*\in A$ and $\beta^*\in B$.  Let $A'=A-\{\alpha^*\}$
and $B'=B-\{\beta^*\}$ and $\ell=\log \frac{t}{2\pi}$. Then
$J^*_{\zeta,t}(A;B;U)$ (defined in Theorem \ref{theo:Jzeta}) has a
simple pole at $\alpha^*=-\beta^*$ with
\begin{eqnarray}
&&\operatornamewithlimits{Res}_{\alpha^*=-\beta^*}
J^*_{\zeta,t}(A;B;U) =-\frac{\chi'}{\chi}(s-\beta^*)
J^*_{\zeta,t}(A';B';U)\nonumber
\\
&&\qquad\qquad\qquad\qquad\qquad\qquad+J^*_{\zeta,t}(A';B;U)+J^*_{\zeta,t}(A'+\{-\beta^*\};B';U).
\end{eqnarray}
\end{lemma}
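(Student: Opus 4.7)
The plan is to mirror the proof of Lemma \ref{lem:residue}, replacing the four properties P1--P4 by zeta-function analogues. Write
\begin{eqnarray*}
J^*_{\zeta,t}(A;B;U) = \prod_{\mu\in U}\Bigl(-\tfrac{\chi'}{\chi}(\tfrac12+it+\mu)\Bigr)\sum_{S,T}D^\zeta_{S,T}(\overline S,\overline T),
\end{eqnarray*}
with $D^\zeta_{S,T}(\overline S,\overline T) = Q_\zeta(S,T)\sum_{\overline S+\overline T=\sum W_r}\prod_r \mathcal H_{S,T}(W_r)$ and
\begin{eqnarray*}
Q_\zeta(S,T) = X_t(S,T)\,\frac{Z_\zeta(S,T)Z_\zeta(S^-,T^-)}{Z_\zeta^{\dagger}(S,S^-)Z_\zeta^{\dagger}(T,T^-)}\,\mathcal{A}_\zeta(T^-,S^-;S,T).
\end{eqnarray*}
The $U$-product is independent of $\alpha^*,\beta^*$, and the Euler pieces $H_{p,1},H_{p,2}$ of $\mathcal H$ are regular at $\alpha^*=-\beta^*$ (the $z_p$ and $z_{p,\theta}$ functions have no poles there), so they contribute only to the $O(1)$ remainders in the local analysis.

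Cases P1--P3, in which at least one of $\alpha^*,\beta^*$ lies in $\overline S\cup\overline T$, transcribe essentially verbatim from the RMT proof: the single pole arises only from the $H_\zeta$ piece of $\mathcal H$, using $\zeta'/\zeta(1+u)=-1/u+O(1)$ and $(\zeta'/\zeta)'(1+u)=1/u^2+O(1)$, while $Q_\zeta$ is regular at $\alpha^*=-\beta^*$. Case P4 ($\alpha^*\in S,\,\beta^*\in T$) is where the genuinely new work lies: $Z_\zeta(S,T)Z_\zeta(S^-,T^-)$ supplies the double pole through $\zeta(1+u)\zeta(1-u)=-1/u^2+O(1)$ with $u=\alpha^*+\beta^*$, and the substitute for the RMT expansion $e^{-N(\alpha^*+\beta^*)}=1-N(\alpha^*+\beta^*)+\cdots$ comes from the piece of $X_t(S,T)$ that carries $\alpha^*$ and $\beta^*$. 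The functional equation $\chi(s)\chi(1-s)=1$, together with a one-variable Taylor expansion, gives
\begin{eqnarray*}
\chi(s+\alpha^*)\chi(1-s+\beta^*) = 1 + (\alpha^*+\beta^*)\,\tfrac{\chi'}{\chi}(s-\beta^*) + O\bigl((\alpha^*+\beta^*)^2\bigr).
\end{eqnarray*}
Combined with the $-1/u^2$ prefactor, the linear coefficient $\chi'/\chi(s-\beta^*)$ produces the $-\tfrac{\chi'}{\chi}(s-\beta^*)\,J^*_{\zeta,t}(A';B';U)$ term of the lemma. The remaining linear Taylor coefficients of $Q_\zeta$---from the $Z_\zeta,Z_\zeta^\dagger,Z_p$ ratios and the $\theta$-integral in $\mathcal{A}_\zeta$---are then packaged into $\mathcal H_{S',T'}(\{\alpha^*\})|_{\alpha^*=-\beta^*}+\mathcal H_{S',T'}(\{\beta^*\})$ using the derivative identities (\ref{eq:cSTderiv}) and (\ref{eqn:dHp2}) of Lemma \ref{lem:lemma3}, in exact parallel with (\ref{eqn:Q}).

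With P1--P4 in hand, the combinatorial reassembly of the residue is identical to the RMT case: the P1 and P4 double poles cancel (so the pole is simple); the $(\overline S,T)$ and $(S,T)$ residues pair into $J^*_{\zeta,t}(A';B;U)$; the $(S,\overline T)$ and $(S,T)$ residues pair into $J^*_{\zeta,t}(A'+\{-\beta^*\};B';U)$; and the $\chi'/\chi(s-\beta^*)$ Taylor coefficient contributes the $-\tfrac{\chi'}{\chi}(s-\beta^*)\,J^*_{\zeta,t}(A';B';U)$ piece. The main obstacle is bookkeeping the arithmetic factor $\mathcal{A}_\zeta(T^-,S^-;S,T)$ through case P4: one has to verify that $\mathcal{A}_\zeta(T^-,S^-;S,T)|_{\alpha^*=-\beta^*}=\mathcal{A}_\zeta(T'^-,S'^-;S',T')$ via pairwise cancellation of $z_{p,\pm\theta}(\tfrac12+\alpha^*)$-type factors inside each prime, and then correctly extract the linear Taylor coefficient of $\mathcal{A}_\zeta$ in $\alpha^*+\beta^*$. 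Lemma \ref{lem:lemma3} is tailor-made for exactly this extraction and carries the only nontrivial arithmetic content; everything else is RMT-style bookkeeping.
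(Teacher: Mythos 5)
Your proposal follows the paper's proof essentially verbatim: the same decomposition into $Q_\zeta(S,T)\mathcal{A}_\zeta(T^-,S^-;S,T)$ times the partition sum over $\mathcal H_{S,T}$, the same four local properties (the paper's Q1--Q4) with $\zeta(1+u)\zeta(1-u)=-1/u^2+O(1)$ and $X_t(\{\alpha^*\},\{\beta^*\})=1+(\alpha^*+\beta^*)\frac{\chi'}{\chi}(s-\beta^*)+O(|\alpha^*+\beta^*|^2)$ replacing $z(u)z(-u)$ and $e^{-N(\alpha^*+\beta^*)}$, the same appeal to Lemma \ref{lem:lemma3} to expand the arithmetic factor, and the same combinatorial reassembly inherited from Lemma \ref{lem:residue}. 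This is correct and is the paper's own argument.
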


\begin{proof}  We use Lemma \ref{lem:residue}.
First we remember the convention that $A=S+\overline{S}$ and
$B=T+\overline{T}$ and we write Theorem \ref{theo:Jzeta} as
\begin{eqnarray}
J^*_{\zeta,t}(A,B;U)=\prod_{\mu\in U}\frac{\chi'}{\chi}(1/2+it+\mu)\sum_{{S\subset A\atop T\subset B}\atop
|S|=|T|} D_{\zeta; S,T}(\overline{S},\overline{T})
\end{eqnarray}
where, with the abbreviation $\mathcal A(S,T):=\mathcal
A_{\zeta}(T^-,S^-;S,T)$, we have
\begin{eqnarray}
  D_{\zeta; S,T}(\overline{S},\overline{T})=Q_\zeta(S,T)\mathcal{A}(S,T)\sum_{\overline{S}+ \overline{T} =\sum W_r}\prod_{r=1}^R
\mathcal H_{S,T}(W_r)
\end{eqnarray}
and
\begin{eqnarray}
Q_\zeta(S,T):=X_t(S,T)
\frac{Z_\zeta(S,T)Z_\zeta(S^-,T^-)}{{Z_\zeta}^\dagger(S,S^-){Z_\zeta}^\dagger(T,T^-)}.
\end{eqnarray}
Now we let $Q_\zeta(S,T)\mathcal{A}(S,T)$ play the role of
$Q(S,T)$ in the proof of Lemma \ref{lem:residue} and $\mathcal
H_{S,T}(W)$ plays the role of $H_{S,T}(W)$.  Thus we need to prove
the four conditions below, describing the behaviour of the various
components of the formula as $\alpha^*$ approaches $-\beta^*$, and
then the rest of the proof is identical to that of Lemma
\ref{lem:residue}.

\begin{description}

\item[Q1] If $\alpha^*\in \overline{S}$ and $\beta^*\in \overline{T}$, then
$Q_\zeta(S,T)\mathcal{A}(S,T)$ is independent of $\alpha^*$ and
$\beta^*$ and
\begin{eqnarray}
\mathcal H_{S,T}(W)=\left\{ \begin{array}{ll}
\frac{1}{(\alpha^*+\beta^*)^2}+O(1)
& \mbox{ if $W=\{\alpha^*,\beta^*\}$ }\\
$O(1)$ & \mbox{ otherwise }
\end{array} \right.
\end{eqnarray}
\item[Q2] If $\alpha^*\in S$ and $\beta^*\in \overline{T}$, then
$Q_\zeta(S,T)\mathcal{A}(S,T)$ is regular when $\alpha^*=-\beta
^*$ and
\begin{eqnarray}
\mathcal H_{S,T}(W)=\left\{ \begin{array}{ll}
\frac{1}{\alpha^*+\beta^*}+O(1)
& \mbox{ if $W=\{\beta^*\}$ }\\
$O(1)$ & \mbox{ otherwise }
\end{array} \right.
\end{eqnarray}
\item[Q3] If $\alpha^*\in \overline{S}$ and $\beta^*\in T$, then
$Q_\zeta(S,T)\mathcal{A}(S,T)$ is regular when $\alpha^*=-\beta^*$
and
\begin{eqnarray}
\mathcal H_{S,T}(W)=\left\{ \begin{array}{ll}
\frac{1}{\alpha^*+\beta^*}+O(1)
& \mbox{ if $W=\{\alpha^*\}$ }\\
$O(1)$ & \mbox{ otherwise }
\end{array} \right.
\end{eqnarray}
\item[Q4] If $\alpha^*\in S$ and $\beta^*\in T$ and
$S'=S-\{\alpha^*\}$ and $T'=T-\{\beta^*\}$, then
$Q_\zeta(S,T)=\big(\frac{-1}{(\alpha^*+\beta^*)^2}+O(1)\big)Q_{\zeta;1}(S,T)$
and
\begin{eqnarray}
&&Q_{\zeta;1}(S,T)\mathcal A(S,T)=Q_\zeta(S',T')\mathcal
A(S',T')\bigg(1\nonumber\\
&& \qquad-(\alpha^*+\beta^*)\big(-\frac{\chi'}{\chi}(s-\beta^*)
+\mathcal H_{S',T'}(\{\alpha^*\})|_{\alpha^*=-\beta^*}+\mathcal
H_{S',T'}(\{\beta^*\})\big)\bigg)  +O(1)
\end{eqnarray}
and
\begin{eqnarray}&&\label{eq:Hexpa}
\mathcal H_{S,T}(W)= \mathcal H_{S',T'}(W)-
(\alpha^*+\beta^*)(\mathcal
H_{S',T'}(W+\{\alpha^*\})|_{\alpha^*=-\beta^*}+\mathcal
H_{S',T'}(W+\{\beta^*\}))\\
&&\qquad \qquad \qquad \qquad +O(|\alpha^*+\beta^*|^2) .\nonumber
\end{eqnarray}
\end{description}

We have just to prove these four conditions to complete the proof.
We start with the first case where $\alpha^*\notin S$,
$\beta^*\notin T$. Then $\alpha^*\in \overline{S}$ and $\beta^*\in
\overline{T}$. The terms $H_{p,1}$ and $H_{p,2}$ have no poles
because of the conditions on the real parts of $\alpha$ and
$\beta$.  The only polar term from $\alpha^*=-\beta^*$ arises from
a situation when one of the partition parts is
$W_r=\{\alpha^*,\beta^*\}$ and there is a pole from
$H_{\zeta;S,T}(W_r)=\left(\frac{\zeta'}{\zeta}\right)'(1+\alpha^*+\beta^*)$.
Since $\left(\frac{\zeta'}{\zeta}\right)'(1+x)=1/x^2+O(1)$ and
$Q_\zeta(S,T)\mathcal{A}(S,T)$ is clearly independent of
$\alpha^*$ and $\beta^*$, the first condition is satisfied.

Next, suppose that $\alpha^*\in S$ and $\beta^*\notin T$. The only
pole in $D_{\zeta;S,T}(\overline{S},\overline{T})$ occurs in the
product of the $H$ for $H_{\zeta;S,T}(W_r)$ when $W_r=\{\beta^*\}$.
 We have
\begin{eqnarray}\label{eqn:innotinz}
H_{\zeta;S,T}(\{\beta^*\})=\sum_{\hat\beta \in
T}\frac{\zeta'}{\zeta}(1+\hat \beta-\beta^*)- \sum_{\hat \alpha\in
S}\frac{\zeta'}{\zeta}(1+\beta^*+\hat \alpha)
\end{eqnarray}
for which, when $\hat \alpha=\alpha^*$, the term
$-\frac{\zeta'}{\zeta}(1+\beta^*+\alpha^*)$ has a simple pole at $
\alpha^*=-\beta^*$ with residue 1.  $Q_\zeta(S,T)\mathcal{A}(S,T)$
depends on $\alpha^*$  and not $\beta^*$, so it is regular when
$\alpha^*=- \beta^*$.

Similarly, when $\alpha^*\notin S$ and $\beta^*\in T$, the only
pole in the product of the $H$ occurs for
$H_{\zeta;S,T}(\{\alpha^*\})$. We have
\begin{eqnarray}\label{eqn:innotinz}
H_{\zeta;S,T}(\{\alpha^*\})=\sum_{\hat\alpha \in
S}\frac{\zeta'}{\zeta}(1+\hat \alpha-\alpha^*)- \sum_{\hat
\beta\in T}\frac{\zeta'}{\zeta}(1+\alpha^*+\hat \beta)
\end{eqnarray}
for which, when $\hat \beta =\beta^*$, the term
$-\frac{\zeta'}{\zeta}(1+\alpha^*+\beta^*)$ has a simple pole at
$\alpha^*=-\beta^*$ with residue 1.

  Finally, we consider the case
$\alpha^*\in S$ and $\beta^*\in T$. Let $S'=S-\{\alpha^*\}$ and
$T'=T-\{\beta^*\}$. We have
\begin{eqnarray}
Q_\zeta(S,T)=\zeta(1+\alpha^*+\beta^*)\zeta(1-\alpha^*-\beta^*)
Q_{\zeta;1}(S,T)
\end{eqnarray}
where
\begin{eqnarray}
Q_{\zeta;1}(S,T)&=&Q_\zeta(S',T')X_t(\{\alpha^*\},\{\beta^*\})\nonumber
\\
&&\qquad \times \frac{\prod_{\hat \beta\in
T'}\zeta(1+\alpha^*+\hat \beta) \zeta(1-\alpha^*-\hat
\beta)\prod_{\hat \alpha\in S'} \zeta(1+\hat
\alpha+\beta^*)\zeta(1-\hat \alpha-\beta^*)}{\prod_{\hat \alpha\in
S'} \zeta(1+\alpha^*-\hat \alpha)\zeta(1+\hat
\alpha-\alpha^*)\prod_{\hat \beta\in T'} \zeta(1+\beta^*-\hat
\beta)\zeta(1+\hat \beta-\beta^*)}.
\end{eqnarray}

Note that $\zeta(1+\alpha^*+\beta^*)\zeta(1-\alpha^*-\beta^*)
=\frac{-1}{(\alpha^*+\beta^*)^2}+O(1)$. Also, remembering that
$\chi(s-\beta^*)\chi(1-s+\beta^*)=1$,
\begin{eqnarray}
Q_{\zeta;1}(S,T)\big|_{\alpha^*=-\beta^*}=Q_\zeta(S',T'),
\end{eqnarray}
which gives us an expansion for $Q_{\zeta;1}(S,T)$ in the
neighborhood of $\alpha^*=-\beta^*$:
\begin{eqnarray} \label{eqn:Qz} \nonumber &&
Q_{\zeta;1}(S,T)=
Q_\zeta(S',T')(1+\frac{\chi'}{\chi}(s-\beta^*)(\alpha^*+\beta^*)+O(|\alpha^*+\beta^*|^2)\\
&&\qquad \times \bigg(1+(\alpha^*+\beta^*)\bigg( \sum_{\hat\alpha
\in S'}\Big( \frac{\zeta'}{\zeta}(1+\hat \alpha+\beta^*)-
\frac{\zeta'}{\zeta}(1-\beta^*-\hat\alpha)\Big)\nonumber\\
&&\qquad \qquad +\sum_{\hat \beta\in T'}
\Big(\frac{\zeta'}{\zeta}(1-\beta^*+\hat
\beta)-\frac{\zeta'}{\zeta}(1+\beta^*-\hat \beta)\Big)\bigg)
  +O(|\alpha^*+\beta^*|^2)\bigg)\nonumber\\
  &&\qquad =
Q_\zeta(S',T')\Big(1-(\alpha^*+\beta^*)\big(-\frac{\chi'}{\chi}(s-\beta^*)+H_{\zeta;S',T'}(\{\alpha^*\})|_{\alpha^*=-\beta^*}\nonumber
\\
&&\qquad\qquad\qquad+
  H_{\zeta;S',T'}(\{\beta^*\})\big)+O(|\alpha^*+\beta^*|^2)\Big).
\end{eqnarray}
Since $\left. \mathcal A(S,T)\right|_{\alpha^*=-\beta^*}=\mathcal
A (S',T')$, we have the expansion around $\alpha^*=-\beta^*$:
\begin{eqnarray} \label{eqn:Az} \nonumber &&
\mathcal A(S,T)= \mathcal A(S',T')
  \bigg(1+(\alpha^*+\beta^*)\sum_p\bigg( \sum_{\hat\alpha \in S'}\big(- \frac{z_p'}{z_p}(1+\hat
  \alpha+\beta^*)+
\frac{z_p'}{z_p}(1-\beta^*-\hat\alpha)\big)\\
&&\qquad \qquad +\sum_{\hat \beta\in T'}
\big(-\frac{z_p'}{z_p}(1-\beta^*+\hat
\beta)+\frac{z_p'}{z_p}(1+\beta^*-\hat \beta)\big)\nonumber\\
&&\qquad -\frac{\int_0^1\mathcal A_{p,\theta}(S',T')\big(\frac
{z_{p,-\theta}'}{z_{p,-\theta}}(\frac 12
-\beta^*)+\frac{z_{p,\theta}'}{z_{p,\theta}}(\frac 12
+\beta^*)\big)~d\theta}{\int_0^1\mathcal
A_{p,\theta}(S',T')~d\theta}\bigg)
  +O(|\alpha^*+\beta^*|^2)\bigg)\nonumber\\
  &&=\mathcal A(S',T')
\bigg(1+(\alpha^*+\beta^*)\sum_p\big(H_{p,1;S',T'}(\{\alpha^*\})|_{\alpha^*=-\beta^*}+
  H_{p,1;S',T'}(\{\beta^*\})\nonumber\\
  &&\qquad \qquad \qquad \quad
  -H_{p,2;S',T'}(\{\alpha^*\})|_{\alpha^*=-\beta^*}-
  H_{p,2;S',T'}(\{\beta^*\})\big)\bigg) +O(|\alpha^*+\beta^*|)^2,
\end{eqnarray}
where the first line is a result of differentiating
(\ref{eq:Azeta}), and the second line from the definitions of
$H_{p,1;S,T}(W)$ (in Theorem \ref{theo:Jzeta})  and
$H_{p,2;S,T}(W)$ (in (\ref{eq:hp2})).

Thus we have
\begin{eqnarray}
&&Q_{\zeta;1}(S,T)\mathcal A(S,T)=Q_\zeta(S',T')\mathcal
A(S',T')\bigg(1\nonumber\\
&&\qquad -(\alpha^*+\beta^*)\big(-\frac{\chi'}{\chi}(s-\beta^*) +\mathcal
H_{S',T'}(\{\alpha^*\})|_{\alpha^*=-\beta^*}+\mathcal
H_{S',T'}(\{\beta^*\})\big)\bigg)  +O(1)
\end{eqnarray}
as $\alpha^*\to -\beta^*$.

Now we obtain an expansion for the product of $H$ term. By the
definition of $H_\zeta$ in Theorem \ref{theo:Jzeta} we have that
\begin{eqnarray}
H_{\zeta;S,T}(\{\alpha\})&=&\sum_{\hat{\alpha}\in
S}\frac{\zeta'}{\zeta}(1+\alpha-\hat \alpha)-\sum_{\hat \beta\in
T}
\frac{\zeta'}{\zeta}(1+\alpha+\hat \beta)\nonumber\\
&=&  H_{\zeta;S',T'}(\{\alpha\})+
\frac{\zeta'}{\zeta}(1+\alpha-\alpha^*)-
\frac{\zeta'}{\zeta}(1+\alpha+ \beta^*);
\end{eqnarray}
\begin{eqnarray}
H_{\zeta;S,T}(\{\beta\})&=&\sum_{\hat \beta \in
T}\frac{\zeta'}{\zeta}(1+\beta-\hat \beta )-
\sum_{\hat \alpha \in S} \frac{\zeta'}{\zeta}(1+\beta+\hat \alpha) \nonumber\\
&=& H_{\zeta;S',T'}(\{\beta\})+
\frac{\zeta'}{\zeta}(1+\beta-\beta^*)-
\frac{\zeta'}{\zeta}(1+\beta+ \alpha^*);
\end{eqnarray}
and
\begin{eqnarray}
H_{\zeta;S,T}(\{\alpha,\beta\})=\left(\frac{\zeta'}{\zeta}\right)'(1+\alpha
+\beta)=H_{\zeta;S',T'}(\{\alpha,\beta\}).
\end{eqnarray}
Thus,
\begin{eqnarray} \label{eqn:HzST}
  H_{\zeta;S,T}(W)\bigg|_{\alpha^*=-\beta^*}= H_{\zeta;S',T'}(W)
\end{eqnarray}
and
\begin{eqnarray} \label{eqn:dHzeta}
\frac{d}{d\alpha^*}H_{\zeta;S,T}(W)\big|_{\alpha^*=-\beta^*}&=&
\left\{\begin{array}{ll}
-\left(\frac{\zeta'}{\zeta}\right)'(1+\alpha+\beta^*) &\mbox{ if $W=\{\alpha\}\subset \overline{S}$}\\
   -\left(\frac{\zeta'}{\zeta}\right)'(1+\beta-\beta^*)&\mbox{ if $W=\{\beta\}\subset \overline{T}$}\\
0 &\mbox{ otherwise}
\end{array}\right.\\
&=& -H_{\zeta,S',T'}(W+
\{\alpha^*\})|_{\alpha^*=-\beta^*}-H_{\zeta,S',T'}(W+
\{\beta^*\}).\nonumber
\end{eqnarray}
In exactly the same way,
\begin{eqnarray} \label{eqn:Hp1ST}
  H_{p,1;S,T}(W)\bigg|_{\alpha^*=-\beta^*}= H_{p,1;S',T'}(W)
\end{eqnarray}
and
\begin{eqnarray} \label{eqn:dHp1}
&&\frac{d}{d\alpha^*}H_{p,1;S,T}(W)\big|_{\alpha^*=-\beta^*}\nonumber
\\
&&\qquad\qquad= -H_{p,1,S',T'}(W+
\{\alpha^*\})|_{\alpha^*=-\beta^*}-H_{p,1,S',T'}(W+ \{\beta^*\}).
\end{eqnarray}

Also, by Lemma \ref{lem:lemma3} we have
\begin{eqnarray} \label{eqn:Hp2ST}
  H_{p,2;S,T}(W)\bigg|_{\alpha^*=-\beta^*}= H_{p,2;S',T'}(W)
\end{eqnarray}
and
\begin{eqnarray}\label{eqn:dHp2a}
&&
\frac{d}{d\alpha^*}H_{p,2;S,T}(W)\bigg|_{\alpha^*=-\beta^*}\nonumber
\\
&&\qquad\qquad= -H_{p,2,S',T'}(W+
\{\alpha^*\})|_{\alpha^*=-\beta^*}-H_{p,2,S',T'}(W+ \{\beta^*\}).
\end{eqnarray}

Combining these results we have exactly equation (\ref{eq:Hexpa}).

The  rest of the proof proceeds exactly as before.
  \end{proof}

  \subsection{$n$-correlation via the ratios conjecture}

Now we proceed to $n$-correlation.  Let $f$ satisfy the conditions
\begin{eqnarray} \label{eq:fconditions}
&&f(x_1,\ldots,x_n) \text{ is holomorphic for }  |\Im x_j| <2, \text{ with } j=1,\ldots,n,\\
&& \text{is translation invariant, ie. } f(x_1+t,\ldots,x_n+t)=f(x_1,\ldots,x_n) \nonumber\\
 && \text{and satisfies } f(0,x_2,\ldots,x_n)\ll
1/(1+|x_2|^2+\cdots+|x_n|^2) \text{ as } |x_j| \to
\infty,\nonumber \\
&&\qquad\qquad\qquad\qquad\qquad\qquad\qquad\qquad \text{ with }
j=2,\ldots,n.\nonumber
\end{eqnarray}

\begin{theorem} \label{theo:zetaofftheline}
Let $\mathcal C_-$ denote the path  from $-\delta+ iT$ down to
$-\delta-iT$ and let $\mathcal C_+$ denote the path  from
$\delta-iT$ up to $\delta+iT$ and let $f$ be as in
(\ref{eq:fconditions}). Using the notation $J_{\zeta,t}(A;B;C)$
from Theorem ~\ref{theo:Jzeta},
\begin{eqnarray}\label{eq:zetaofftheline}
&&\sum_{0<\gamma_{j_1},\dots , \gamma_{j_n}\le T}
f(\gamma_{j_1},\dots,\gamma_{j_n})\nonumber
\\
&&\qquad =\frac{1}{(2\pi i)^n} \sum_{K+L+M=
\{1,\dots,n\}}(-1)^{|L|+|M|}  \\
&&\qquad\qquad\qquad \times\int_{\mathcal {C_+}^K} \int_{\mathcal
{C_-}^{L+ M}}\frac{1}{T}\int_{I^*}J_{\zeta,t}(z_K;-z_L;-z_M) ~dt
~f(iz_1,\dots,iz_n)~dz_1\dots ~dz_n\nonumber
\end{eqnarray}
where
  $z_K=\{z_k:k\in K\}$,  $-z_L=\{-z_\ell:\ell\in L\}$ and
  $\int_{\mathcal {C_+}^K} \int_{\mathcal {C_-}^{L+ M}}$
  means that we are integrating all of the variables in $z_K$ along the $\mathcal C_+$
  path  and all of the variables in $z_{L}$ or $z_{M}$ along the $\mathcal C_-$
  path; and  $I^*$ is the interval which has lower endpoint  $\max\{0,-\Im z_1,\ldots,-\Im z_n\}$
and upper endpoint
$\min\{T,T-\Im z_1,\ldots,T-\Im z_n\}$.
  \end{theorem}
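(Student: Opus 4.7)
Our plan follows the proof of Theorem~\ref{theo:offtheline} closely, with three substitutions appropriate to the zeta setting. First, the Haar integration over $U(N)$ is replaced by integration in a real shift parameter $t$, exploited via the translation invariance of $f$. Second, the identity $sN = s\Lambda_X'/\Lambda_X(s) + s^{-1}\Lambda_{X^*}'/\Lambda_{X^*}(s^{-1})$ is replaced by the functional equation $\zeta'/\zeta(s) = \chi'/\chi(s) - \zeta'/\zeta(1-s)$. Third, the periodicity of $f$ used in the RMT proof to kill horizontal contour pieces is here replaced by the decay assumption on $f$ in~(\ref{eq:fconditions}).

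Concretely, fix a real $t$ and set $s_k = 1/2 + it + z_k$. The nontrivial zero $\tfrac12+i\gamma_{j_k}$ then corresponds to $z_k = i(\gamma_{j_k}-t)$, a point on the imaginary axis. By the argument principle, a shifted sum of the form $\sum f(\gamma_{j_1}-t,\ldots,\gamma_{j_n}-t)$ over zeros in an appropriate range equals (after a change of variables converting $f(-iz_k)$ to $f(iz_k)$) an $n$-fold contour integral of $\prod_k \zeta'/\zeta(1/2+it+z_k) \cdot f(iz_1,\ldots,iz_n)$ around rectangular contours in each $z_k$-plane enclosing a segment of the imaginary axis.

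Translation invariance of $f$ gives
\begin{eqnarray*}
T\,\sum_{0<\gamma_{j_k}\le T} f(\gamma_{j_1},\ldots,\gamma_{j_n}) = \int_0^T \sum f(\gamma_{j_1}-t,\ldots,\gamma_{j_n}-t)\,dt,
\end{eqnarray*}
and after exchanging the order of integration, the $t$-range forced by the requirement that the rectangular $z_k$-contours enclose exactly the zeros with $0<\gamma_{j_k}\le T$ yields the interval $I^*$ with its $\Im z_k$-dependent endpoints. Decomposing each rectangular contour into its vertical sides $\mathcal C_\pm$, the horizontal pieces are negligible by the decay of $f$ together with standard bounds on $\zeta'/\zeta$ along vertical lines. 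On $\mathcal C_-$ the functional equation rewrites $\zeta'/\zeta(1/2+it+z_k) = \chi'/\chi(1/2+it+z_k) - \zeta'/\zeta(1/2-it-z_k)$, and expanding the product over all $n$ variables decomposes the integrand into a sum over ordered partitions $\{1,\ldots,n\} = K+L+M$: indices in $K$ remain on $\mathcal C_+$ with factor $\zeta'/\zeta(1/2+it+z_k)$; indices in $L$ sit on $\mathcal C_-$ contributing $-\zeta'/\zeta(1/2-it-z_\ell)$; and indices in $M$ sit on $\mathcal C_-$ contributing the $\chi'/\chi$ term. Reorganizing and collecting the signs coming from $-\zeta'/\zeta$ and from the $-\chi'/\chi$ convention in the definition of $J_{\zeta,t}(A;B;U)$ (Theorem~\ref{theo:Jzeta}) produces the factor $(-1)^{|L|+|M|}$ and identifies the integrand as $J_{\zeta,t}(z_K;-z_L;-z_M)$.

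The main technical obstacle is careful bookkeeping of the interplay between the $t$-integration range and the contour positions, which is what produces the precise endpoints $\max\{0,-\Im z_1,\ldots,-\Im z_n\}$ and $\min\{T, T-\Im z_1, \ldots, T-\Im z_n\}$ of $I^*$, and the verification that both the horizontal contour pieces and the boundary contributions in the $t$-integral are negligible (both handled by the decay in~(\ref{eq:fconditions})). Apart from these details, the argument runs parallel to the proof of Theorem~\ref{theo:offtheline}.
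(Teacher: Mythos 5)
Your proposal follows essentially the same route as the paper: Cauchy's theorem for the contour representation, translation invariance of $f$ to introduce the $t$-average and the interval $I^*$ after exchanging the order of integration, the functional equation $\frac{\zeta'}{\zeta}(s)=\frac{\chi'}{\chi}(s)-\frac{\zeta'}{\zeta}(1-s)$ on $\mathcal C_-$, and the expansion over partitions $K+L+M$ producing the sign $(-1)^{|L|+|M|}$. The only small correction is that the horizontal contour segments are not killed by the decay of $f$ alone but by choosing their heights to avoid zeros so that $\frac{\zeta'}{\zeta}$ is controlled there (the Davenport argument), which contributes an $O(T^{\epsilon})$ error rather than vanishing exactly.
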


\begin{proof}
By Cauchy's theorem we can express the sum over zeros as
\begin{eqnarray}\label{eq:nfold}
&&\sum_{0<  \gamma_1,\dots ,\gamma_n\leq T}
f(\gamma_1,\dots,\gamma_n)\nonumber
\\
&&\qquad\qquad=\frac{1}{(2\pi i)^n} \int_{\mathcal C}\dots
\int_{\mathcal C} f(-iz_1,\dots,-iz_n)\prod_{j=1}^n
\frac{\zeta'}{\zeta}(1/2+z_j)~dz_1\dots dz_n,
\end{eqnarray}
where $\mathcal C$ is a positively oriented contour which encloses
a subinterval of the imaginary axis from zero to $T$. We choose a
specific path $\mathcal C$ to be the positively oriented rectangle
that has vertices $\delta,\delta+iT, -\delta+iT, -\delta$ where
$\delta$ is a small positive number.

Due to the translation invariance of $f$, (\ref{eq:nfold}) equals
\begin{eqnarray}
&&\frac{1}{T}\int_0^T\frac{1}{(2\pi i)^n} \int_{\mathcal C}\dots
\int_{\mathcal C}
f(-iz_1-t,\dots,-iz_n-t)\nonumber \\
&&\qquad\qquad\qquad\qquad\qquad\qquad\qquad\qquad
\times\prod_{j=1}^n
\frac{\zeta'}{\zeta}(1/2+z_j)~dz_1\dots dz_n~dt\nonumber \\
&&=\frac{1}{T}\int_0^T\frac{1}{(2\pi i)^n} \int_{\mathcal
C_{-it}}\dots \int_{\mathcal C_{-it}}
f(-iz_1,\dots,-iz_n)\nonumber
\\
&& \qquad\qquad\qquad\qquad\qquad\qquad\qquad\times\prod_{j=1}^n
\frac{\zeta'}{\zeta}(1/2+it+z_j)~dz_1\dots
dz_n~dt\nonumber \\
&&=\frac{1}{(2\pi i)^n}
\sum_{\epsilon_j\in\{-1,+1\}}\int_{\mathcal C_{\epsilon_n}}\dots
\int_{\mathcal C_{\epsilon_1}}\frac{1}{T}\int_{I^*}
f(-iz_1,\dots,-iz_n)\nonumber
\\
&& \qquad\qquad\qquad\qquad\qquad\qquad\qquad\times\prod_{j=1}^n
\frac{\zeta'}{\zeta}(1/2+it+z_j)~dt~dz_1\dots
dz_n+O(T^{\epsilon})\label{eq:qxvz}
\end{eqnarray}
where the range of the innermost
integral is the interval $I^*$ which has lower endpoint  $\max\{0,-\Im z_1,\ldots,-\Im z_n\}$
and upper endpoint
$\min\{T,T-\Im z_1,\ldots,T-\Im z_n\}$.
In the second line we made a change of variables $z_j\rightarrow
z_j+it$. The contour $\mathcal C_{-it}$ is $\mathcal C$ shifted
down by $-it$; that is, it runs from $\delta-it,\delta+i(T-t),
-\delta+i(T-t), -\delta-it$.  In progressing to the third line, we
note that the horizontal portions of the contour of integration
can be chosen so that the integral along them is $O(T^{\epsilon})$
(following the identical argument to Davenport \cite{kn:dav80},
page 108), so we concentrate on the vertical sides of the
contours.  When we now exchange the order of integration to move
the $t$ integral to the inside, the integration over
$z_1,\ldots,z_n$ becomes the sum of $2^n$ integrals, each on one
of the contours $\mathcal C_+$ or $\mathcal C_-$ defined in
Theorem \ref{theo:zetaofftheline}.

\begin{remark}The main integral is of size $\approx T \log ^n T$.  The $T$
is a result of $\int_{[-T,T]^n} f \approx T$; the power of the log
comes from the moment of the logarithmic derivative and will
become clear from the examples at the end of the paper.
\end{remark}


For each variable $z_j$ in (\ref{eq:qxvz}) which is on $\mathcal
C_-$ we use the functional equation
\begin{equation}
\label{eq:logderivfe}\frac{\zeta'}{\zeta}(s)=\frac{\chi'}{\chi}(s)-\frac{\zeta'}{\zeta}(1-s)
\end{equation}
to replace $\frac{\zeta'}{\zeta}(s+z_j)$, where $s=1/2+it$.
 In this
way we find that (\ref{eq:qxvz}) equals
\begin{eqnarray}
&& \frac{1}{(2\pi i)^n}\sum_{\epsilon_j\in\{-1,+1\}}\int_{\mathcal
C_{\epsilon_n}}\dots \int_{\mathcal
C_{\epsilon_1}}\frac{1}{T}\int_{I^*} \; \prod_{j=1}^n
\left(\frac{1-\epsilon_j}{2}\frac{\chi'}{\chi}(s+z_j)+\epsilon_j
\frac{\zeta'} {\zeta}
(1/2+\epsilon_j(it+z_j))\right)\\
&&\qquad \qquad \times f(iz_1,\dots,iz_n)~dt~dz_1\dots
dz_n.\nonumber
\end{eqnarray}

Another way to write this equation is
\begin{eqnarray}&& \frac{1}{(2\pi i)^n}
\sum_{K\subset\{1,\dots,n\}} \prod_{j\in K} \int_{\mathcal
C_+} \prod_{j\notin K}\int_{\mathcal
C_-}\frac{1}{T}\int_{I^*}\frac{\zeta'} {\zeta} (s+z_j)
  \left(\frac{\chi'}{\chi}(s+z_j)-\frac{\zeta'}
{\zeta}
(1-s-z_j)\right)\\
&& \qquad \qquad \times f(iz_1,\dots,iz_n)~dt~dz_1\dots
dz_n.\nonumber
\end{eqnarray}
(Note that the $dz$'s are no longer in order
 but this
should not cause confusion.) The expansion of the product over
$j\notin K$ can be easily expressed as a sum over subsets of $K$.
This yields
\begin{eqnarray}
&&   \frac{1}{(2\pi i)^n} \sum_{K+L+M=\{1,\dots,n\}}(-1)^{|L|+|M|}
\prod_{k\in K} \int_{\mathcal C_+}\prod_{\ell\in L}\int_{\mathcal
C_-}\frac{1}{T}\int_{I^*}\frac{\zeta'} {\zeta} (s+z_k)
  \frac{\zeta'}
{\zeta} (1-s-z_{\ell})
  \\
&& \qquad \qquad \times \prod_{m\in M}\int_{\mathcal
C_-}\Big(-\frac{\chi'}{\chi}(s+z_m)\Big)\;\;
  f(iz_1,\dots,iz_n)~dt~dz_1\dots ~dz_n.\nonumber
\end{eqnarray}

\begin{remark}
We note the asymptotic for $\frac{\chi'}{\chi}$:
\begin{eqnarray}
\label{eq:chichi}
\frac{\chi'}{\chi}(1/2+it)=-\log\frac{|t|}{2\pi}\left(1+O\left(\frac
1 {|t|}\right)\right).
\end{eqnarray}
In some applications $|z_m|$ is small relative to $t$ and it
simplifies the formulae to replace $\frac{\chi'}{\chi}(s+z_m)$
with $-\log \tfrac{|t|}{2\pi}$.   However, here where $z_k$ can be
the same size as $t$ we will not use this approximation.
\end{remark}

We have the statement of Theorem
\ref{theo:zetaofftheline}.

\end{proof}



\subsection{$n$-correlation for the Riemann zeros}

We will now state our main theorem.

\begin{theorem}   Assume the Ratios Conjecture
\ref{conj:ratiozeta}. Let $J_{\zeta,t}^*$ be as defined in Theorem
\ref{theo:Jzeta}. Then \label{theo:mainzeta}
\begin{eqnarray}&&\sum_{0<\gamma_1\neq\cdots\neq\gamma_n\leq T} f(\gamma_1,\dots,\gamma_n)
\nonumber
\\
&&\qquad =\frac{1}{(2\pi )^n}\int_{[-T,T]^n} \frac{1}{T}\int_{I^*}
  \sum_{K+L+M=
\{1,\dots,n\}}J_{\zeta,t}^*(-iz_K;iz_L;iz_M)~dt~\\
&&\qquad\qquad\qquad\qquad\qquad\times f(z_1,\dots,z_n)~dz_1\dots
~dz_n+ O(T^{1/2+\epsilon})\nonumber
\end{eqnarray}
where  $-iz_K=\{-iz_k:k\in K\}$, $iz_L=\{iz_\ell:\ell\in L\}$,  and
$iz_M=\{iz_m:m\in M\}$. Moreover, the integrand has no poles
on the path of integration.
\end{theorem}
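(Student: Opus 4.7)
The plan is to imitate, essentially verbatim, the induction scheme of the proof of Theorem \ref{theo:main1}, now using Theorem \ref{theo:zetaofftheline} and Lemma \ref{lem:residuezeta} in place of Theorem \ref{theo:offtheline} and Lemma \ref{lem:residue}. The observation that makes the adaptation work is that Lemma \ref{lem:residuezeta} differs from Lemma \ref{lem:residue} only in that the factor $N$ is replaced by $-\frac{\chi'}{\chi}(s-\beta^*)$, and this is precisely the quantity that, under the identifications dictated by Theorem \ref{theo:zetaofftheline}, gets absorbed into the $U$ argument of $J^*_{\zeta,t}$ when an index migrates into the set $M$. The combinatorial structure of the RMT proof therefore transplants cleanly, with the only new ingredient being the $O(T^{1/2+\epsilon})$ error supplied by each application of the Ratios Conjecture.

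Concretely, I would introduce the hybrid sum $\sum^{n,R}$ over tuples $(\gamma_{j_1},\dots,\gamma_{j_n})$ with $0<\gamma_{j_k}\le T$ subject to $j_m\neq j_\ell$ for $m,\ell>R$, together with the hybrid integral $I^{n,R}_{f;K,L,M,\zeta}$ in which $R$ of the variables are integrated along the imaginary axis as principal values and the remaining $n-R$ traverse $\mathcal{C}_{\pm}$ exactly as in Theorem \ref{theo:zetaofftheline}. The target intermediate identity is
\begin{eqnarray*}
&& (2\pi i)^n \sum\!^{n,R} f(\gamma_{j_1},\dots,\gamma_{j_n}) \\
&&\qquad = \sum_{K+L+M=\{1,\dots,n\}} (-1)^{|(L+M)\cap\{1,\dots,R\}|} I^{n,R}_{f;K,L,M,\zeta} + O(T^{1/2+\epsilon}),
\end{eqnarray*}
which at $R=n$ is exactly Theorem \ref{theo:zetaofftheline} and at $R=0$ is the statement of the theorem. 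The induction proceeds on $(n,R)$, stepping $R\to R-1$ by moving the $z_R$ contour onto the imaginary axis. Inspection of the pole structure of $J^*_{\zeta,t}$ (established in Lemma \ref{lem:residuezeta}) shows that poles occur only at $z_R=z_t$ for $t>R$ in the configurations $R\in K,\; t\in L$ or $R\in L,\; t\in K$; for $t<R$ the $z_t$ contour is still off-axis, so no pole is crossed. Each residue contributes $\pi i$ times the three terms of Lemma \ref{lem:residuezeta}, and after relabeling $z_1,\dots,z_{R-1},z_{R+1},\dots,z_n\to z_1,\dots,z_{n-1}$ the combinatorial identity
\begin{eqnarray*}
&& \sum_{K+L+M=\{1,\dots,m-1\}} \bigl(h(K+\{m\},L,M)+h(K,L+\{m\},M)+h(K,L,M+\{m\})\bigr) \\
&&\qquad = \sum_{K+L+M=\{1,\dots,m\}} h(K,L,M)
\end{eqnarray*}
repackages the three residue families into a single sum over partitions of $\{1,\dots,n-1\}$, with the $-\frac{\chi'}{\chi}(s-\beta^*)$ factor corresponding to the absorbed index migrating into $M$. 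Applying the inductive hypothesis at $(n-1,R-1)$ matches this contribution to the difference $\sum^{n,R-1}-\sum^{n,R}$ on the left-hand side.

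The base cases are $R=n$ for all $n$ (which is Theorem \ref{theo:zetaofftheline}) and $n=1$ for all $R$ (using $J^*_{\zeta,t}(\emptyset;B;U)=0$ for nonempty $B$). The no-poles claim at the end is the zeta analog of the cancellation argument at the end of the proof of Theorem \ref{theo:main}: the antisymmetry $\operatornamewithlimits{Res}_{s=x}f(s,x)=-\operatornamewithlimits{Res}_{s=x}f(x,s)$ annihilates any potential simple pole along $\gamma_k=\gamma_\ell$, and singularities of complex codimension two or higher are automatically empty by \cite{kn:krantz}. The main obstacle is purely combinatorial bookkeeping: tracking the sign $(-1)^{|(L+M)\cap\{1,\dots,R\}|}$ as indices migrate in and out of the restricted range, handling the asymmetry between $R\in K$ and $R\in L$ (the two cases yield $\pm\pi i$ that combine into an overall factor of $2$ absorbed into the normalizations, with the sign flip coming from the $-z_L$ in the argument of $J$), and verifying that the $-\frac{\chi'}{\chi}(s-\beta^*)$ factor produced by Lemma \ref{lem:residuezeta} is consistent with the $-\frac{\chi'}{\chi}(s+\mu)$ factors in the $U$-slot of $J^*_{\zeta,t}$ after the coordinate identifications of Theorem \ref{theo:zetaofftheline}. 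A secondary technical issue is that the averaging interval $\frac{1}{T}\int_{I^*}dt$ is not exactly $[0,T]$, but the discrepancy near the endpoints is absorbed into the $O(T^{1/2+\epsilon})$ error by the same boundary estimate already used to control the horizontal portions of $\mathcal C$ in the proof of Theorem \ref{theo:zetaofftheline}; since the induction depth is bounded in terms of $n$ alone, the aggregated Ratios Conjecture error remains $O(T^{1/2+\epsilon})$.
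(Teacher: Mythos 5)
Your proposal is correct and follows essentially the same route as the paper, which itself says only that the proof is ``nearly identical'' to that of Theorem \ref{theo:main}: the same induction on the number of on-axis contours, with Theorem \ref{theo:zetaofftheline} as the base case and Lemma \ref{lem:residuezeta} supplying the residues, and the same antisymmetry-plus-Krantz argument for the absence of poles. The one point the paper singles out --- possible poles sitting exactly at the endpoints of the vertical contours, absent in the RMT case because of periodicity, and removed by slightly extending the paths at the cost of $O(T^{\epsilon})$ --- is essentially the endpoint issue you flag, handled by the same boundary estimate.
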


The proof is nearly identical to that of Theorem \ref{theo:main}.
The only difference is that some care is needed with regard to
endpoints of intervals when we move each new path of integration
onto the imaginary axis. The (slight) difficulty is with poles
that may lie at the very endpoints; this point did not arise in
the random matrix theory context because of the periodicity of the
integrand. However by extending the paths slightly we can
circumvent this difficulty; an argument like that used to handle
the horizontal segments in the proof of Theorem
\ref{theo:zetaofftheline} will work in this case, too, and
introduces an error term of size only $O(T^{\epsilon})$.

It remains to verify that the integrand in Theorem
\ref{theo:mainzeta} has no poles on the path of integration.  We
have already confirmed in Lemma \ref{lem:residue} that each
$J^*(-i\theta_K;i\theta_L)$ has only a simple pole at
$\theta_k=-\theta_\ell$ for $\theta_k\in \theta_K$ and
$\theta_\ell\in \theta_L$.

We check that
\begin{eqnarray}\label{eq:singsetzeta}
\sum_{K+L+M=\{1,2,\ldots,n\}}J_{\zeta,t}^*(-i\theta_K;i\theta_L;i\theta_M)
\end{eqnarray}
has no pole at $\theta_1=\theta_2$ when the values of the remaining $\theta_j$ are unequal to $\theta_1$ or $\theta_2$.
  A given $J_{\zeta,t}^*(-i\theta_K;i\theta_L;i\theta_M)$ only has
a pole when $\theta_1\in \theta_L$ and $\theta_2 \in \theta_K$, or
vice versa, so
\begin{eqnarray}
&&\operatornamewithlimits{Res}_{\theta_1=\theta_2}
\sum_{K+L+M=\{1,2,\ldots,n\}}
J_{\zeta,t}^*(-i\theta_K;i\theta_L;i\theta_M) \nonumber \\
&&\qquad= \sum_{K+L+M=\{3,\ldots,n\}}
\operatornamewithlimits{Res}_{\theta_1=\theta_2}\Big(J_{\zeta,t}^*(-i\theta_K+\{-i\theta_1\}
;\{i\theta_2\}+i\theta_L;i\theta_M) \nonumber \\
&&\qquad\qquad\qquad\qquad+ J_{\zeta,t}^*(-i\theta_K+\{-i\theta_2\}
;\{i\theta_1\}+i\theta_L;i\theta_M)\Big)=0;
\end{eqnarray}
this is zero because $\operatornamewithlimits{Res}_{s=x}f(s,x)=-
\operatornamewithlimits{Res}_{s=x}f(x,s)$.

Thus if (\ref{eq:singsetzeta}) had a singular set it would be of
complex dimension less than $n-1$ and by standard results in the
theory of several complex variables, this implies that there is no
singular set (see for example \cite{kn:krantz}, Corollary 7.3.2).

Our  proof of $n$-correlation in the case of $\zeta$-zeros is now complete.

\begin{corollary}
By rearranging the integrals, now that we know the integrand has
no singularities, and using the fact that $f$ is translation
invariant we have that the ratios conjecture implies that
\begin{eqnarray}&&\sum_{0<\gamma_1\neq\cdots\neq\gamma_n\leq T} f(\gamma_1,\dots,\gamma_n)
\nonumber
\\
&&\qquad =\frac{1}{T}\int_0^T \frac{1}{(2\pi )^n}\int_{[-T,T]^n} \sum_{K+L+M=
\{1,\dots,n\}}
  J_{\zeta,t}^*(-iz_K+it;iz_L-it;iz_M-it) \\
&&\qquad\qquad\qquad\qquad\qquad\times f(z_1,\dots,z_n)~dz_1\dots
~dz_n~dt+ O(T^{1/2+\epsilon})\nonumber
\end{eqnarray}
\end{corollary}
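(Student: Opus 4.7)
The strategy is simply Fubini followed by a translation of the integration variables, so the corollary is essentially a rewriting of Theorem \ref{theo:mainzeta}.

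Start from Theorem \ref{theo:mainzeta}. Since the variables $z_1,\ldots,z_n$ there range over the real interval $[-T,T]$, we have $\Im z_j=0$ for every $j$, so the interval
\begin{eqnarray*}
I^*=[\max\{0,-\Im z_1,\ldots,-\Im z_n\},\min\{T,T-\Im z_1,\ldots,T-\Im z_n\}]
\end{eqnarray*}
degenerates to $[0,T]$ independently of $z$. Because the integrand has no singularities on the path of integration (verified at the end of the proof of Theorem \ref{theo:mainzeta}) and because the Ratios Conjecture, together with the standard bounds on $\chi$, $\chi'/\chi$ and $\zeta'/\zeta$ near the critical line, provides an integrable majorant, Fubini's theorem applies and lets us interchange $\frac{1}{(2\pi)^n}\int_{[-T,T]^n}dz$ with $\frac{1}{T}\int_0^T dt$.

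Next, for each fixed $t\in[0,T]$, perform the substitution $w_j=z_j+t$ inside the inner integral. Under this shift the arguments of $J^*_{\zeta,t}$ transform as
\begin{eqnarray*}
-iz_K\;\longmapsto\; -iw_K+it,\qquad iz_L\;\longmapsto\; iw_L-it,\qquad iz_M\;\longmapsto\; iw_M-it,
\end{eqnarray*}
which is exactly the form prescribed by the corollary. The translation invariance assumption in (\ref{eq:fconditions}) gives $f(z_1,\ldots,z_n)=f(w_1-t,\ldots,w_n-t)=f(w_1,\ldots,w_n)$, so after relabelling $w\to z$ the integrand is precisely the one appearing in the corollary. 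What has changed is only the domain: the region of integration for $w$ is $[-T+t,T+t]^n$ rather than $[-T,T]^n$.

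Finally, we trade $[-T+t,T+t]^n$ for $[-T,T]^n$. The symmetric difference consists of $2n$ slabs, in each of which one coordinate lies in an interval of length $t$ at the boundary while the other $n-1$ coordinates range over intervals of length $2T$. Using the decay of $f$ from (\ref{eq:fconditions}) to integrate out the non-slab variables and the polynomial-in-$\log T$ bound on $J^*_{\zeta,t}$ supplied by the Ratios Conjecture and the standard bounds on $\chi,\chi'/\chi,\zeta'/\zeta$, each slab contributes an error that, after the $\frac{1}{T}\int_0^T dt$ averaging, is absorbed into the $O(T^{1/2+\epsilon})$ already present.

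The main obstacle is this last step. Inside the slabs all $n$ coordinates can be simultaneously large, and the nominal decay of $f$ only controls differences of its arguments; consequently the bookkeeping requires that one use the uniform bounds on $J^*_{\zeta,t}$ in combination with the $1/T$ outer average in $t$ to secure the required saving. Every other ingredient in the proof is just Fubini, a change of variables and translation invariance of $f$.
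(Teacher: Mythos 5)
The paper offers no argument for this corollary beyond the clause embedded in its statement, so your skeleton --- Fubini, the shift $w_j=z_j\pm t$, and translation invariance of $f$ --- is exactly the intended route. The genuine problem is your opening step: the claim that $I^*$ degenerates to $[0,T]$ because $\Im z_j=0$. In Theorem \ref{theo:mainzeta} the real variables $z_j\in[-T,T]$ are the parametrizations of points that, in Theorem \ref{theo:zetaofftheline}, sat on the contours $\mathcal C_\pm$ with imaginary parts running over $[-T,T]$; the ``$\Im z_j$'' in the definition of $I^*$ refers to those original imaginary parts, which correspond to $\mp z_j$ in the new real variables. So $I^*$ genuinely depends on $z$ (it is the set of $t\in[0,T]$ for which the shifted contour $\mathcal C_{-it}$ still covers the point in question), and this dependence is the entire content of ``rearranging the integrals'': the coupled region $\{(z,t):z\in[-T,T]^n,\ t\in I^*(z)\}$ is, up to boundary slivers, precisely the image under the shift of the product region $[0,T]\times[-T,T]^n$ appearing in the corollary. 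If $I^*$ were constant there would be nothing to rearrange, and the authors could have pulled $\tfrac1T\int_0^T dt$ outside from the start.

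This misreading then breaks your final estimate. Under your reading, after the shift you must trade $[-T+t,T+t]^n$ for $[-T,T]^n$, and you claim each boundary slab is absorbed into $O(T^{1/2+\epsilon})$. It is not: in the slab where one coordinate lies in an interval of length $t$ at the edge, the decay hypothesis (\ref{eq:fconditions}) controls only the \emph{differences} of the arguments of $f$, so the transverse $n-1$ integrals are $O(1)$ but the longitudinal one contributes a full factor of $t$, and $J^*_{\zeta,t}$ is of size $\log^nT$ there, not small. Each slab therefore contributes $\asymp t\log^nT$, and
\begin{equation*}
\frac1T\int_0^T t\log^nT\,dt\asymp T\log^nT,
\end{equation*}
which is the order of the main term, not $O(T^{1/2+\epsilon})$. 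The resolution is that these slabs are not error terms at all: the $z$-dependence of $I^*$ removes exactly the corresponding pieces from the left-hand side before Fubini, so the two main-term-sized quantities cancel identically rather than needing to be bounded. Once $I^*$ is read correctly, the remaining discrepancy between the Fubini'd region and $[0,T]\times[-T,T]^n$ is confined to corners where the argument of the type you give (decay of $f$ plus the $O(T^\epsilon)$ horizontal-segment argument from the proof of Theorem \ref{theo:zetaofftheline}) does suffice.
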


\section{Examples} In this section we explicitly write out
all of the terms in our expressions for $n$-correlations for RMT
eigenvalues and for $\zeta$-zeros for $2\le n\le 4$. In the case
of the $\zeta$ correlations, we simplify the terms $X_t(S,T)$ and
$\frac{\chi'}{\chi}(s+\alpha)$ using the approximations involving
$\ell=\log \tfrac{t}{2\pi}$ mentioned at (\ref{eq:chiapprox}) and
(\ref{eq:chichi}).

To proceed, we  calculate $J^*(A;B)$ and
$J^*_{\zeta,t}(A;B):=J^*_{\zeta,t}(A;B;U)/\prod_{\mu \in U}
\frac{\chi'}{\chi}(\tfrac{1}{2}+it+\mu)$ for sets $A$ and $B$ with
4 or fewer elements.  $D_{S,T}$ and $D_{\zeta,S,T}$ are defined in
the proofs of Lemma \ref{lem:residue} and Lemma
\ref{lem:residuezeta}, respectively. In the following sections we
first compile $D_{S,T}$, $D_{\zeta,S,T}$, then evaluate $J^*$ and
$J_{\zeta,t}^*$ and then assemble these into the correlation
formulas, $R_{N,n}(x_1,\ldots,x_n)$ and
$R_{\zeta,t,n}(x_1,\ldots,x_n)$. Here we define $R$ via
\begin{eqnarray}
&&\int_{U(N)} \sideset{}{^*}\sum_{1\leq j_1,\ldots,j_n\leq N}
f(\theta_1,\ldots,\theta_n) dX_N\nonumber \\
&&\qquad\qquad = \frac{1}{(2\pi)^n} \int_{[0,2\pi]^n} R_{N,n}
(x_1,\ldots,x_n)f(x_1,\ldots,x_n) dx_1\cdots dx_n
\end{eqnarray}
and
\begin{eqnarray}
&&\sum_{0<\gamma_1\neq \cdots \neq \gamma_n\leq T}
f(\gamma_1,\ldots,\gamma_n) = \frac{1}{(2\pi)^n} \int_{[-T,T]^n}
\frac{1}{T}\int_{I^*} R_{\zeta,t,n}(x_1,\ldots,x_n) dt \nonumber
\\
&&\qquad\qquad\qquad\qquad \times f(x_1,\ldots,x_n)dx_1\cdots
dx_n+O(T^{1/2+\epsilon}),
\end{eqnarray}
(compare these with Theorems \ref{theo:main} and
\ref{theo:mainzeta}).

 Recall that
 \begin{eqnarray}
z(x)=\frac{1}{(1-e^{-x})},
\end{eqnarray}
\begin{eqnarray}
S(x)=S_N(x)=\frac{\sin \frac{Nx}{2}}{\sin \frac x 2},
\end{eqnarray}
\begin{eqnarray}z_p(x):=(1-p^{-x})^{-1},
\end{eqnarray}
 and
\begin{eqnarray}Z_p(A,B)=\prod_{\alpha\in A\atop\beta\in B}
z_p(1+\alpha+\beta)^{-1}.\end{eqnarray}

We also will introduce, as needed below, a number of other
expressions $A(x)$, $B(x)$, etc.; for convenience, these are
listed in Section \ref{sect:auxfunc}.

 \subsection{Pair correlation, RMT}

Suppose that the sets $A$ and $B$ have just one element:
$A=\{a\},B=\{b\}$. We have
\begin{eqnarray}
&&J^*(a;b)= D_{\phi,\phi}+D_{a,b},
\end{eqnarray}
where
\begin{eqnarray}
D_{\phi,\phi}=\left(\frac{z'}{z}\right)'(a+b),
\end{eqnarray}
and
\begin{eqnarray}
D_{a,b}=e^{-N(a+b)}z(a+b)z(-a-b).
\end{eqnarray}
Thus,
\begin{eqnarray} \label{eqn:Jab}
&&J^*(a;b)=
 \left(\frac{z'}{z}\right)'(a+b)
+e^{-N(a+b)}z(a+b)z(-a-b),
\end{eqnarray}

Then the 2-point correlation function is
\begin{eqnarray}
R_{N,2}(u,v) = N^2+J^*(iu;-iv)+J^*(-iu;iv) =\det
\left(\begin{array}{cc}N&S(u-v)\\S(v-u)& N\end{array}\right).
\end{eqnarray}

\subsection{Pair correlation, $\zeta$}
We have
\begin{eqnarray}
\label{eq:I1ab} J^*_{\zeta,t}(a;b)&=& \Big(\frac{\zeta'}{\zeta}\Big)'(1+a+b)- B(a+b) \nonumber\\
&&\qquad+e^{-\ell(a+b)} \zeta(1+a+b)\zeta(1-a-b) A(a+b)
\end{eqnarray}
where $\ell=\log \frac{t}{2\pi}$ and
\begin{eqnarray}
\label{eq:Aprime} A(x)&=&\prod_p
\frac{(1-\tfrac{1}{p^{1+x}})(1-\tfrac{2}{p}+\tfrac{1}{p^{1+x}})}
{(1-\tfrac{1}{p})^2},
\end{eqnarray}
\begin{eqnarray}
\label{eq:Bprime} B(x)&=&\sum_p\left(\frac{\log
p}{p^{1+x}-1}\right)^2,
\end{eqnarray}
and conjecture that
\begin{eqnarray} R_{\zeta,t,2}(u,v)=\ell^2+J^*_\zeta(iu;-iv)+J^*_\zeta(-iu;iv).
\end{eqnarray}
Further, letting
\begin{eqnarray}
P_1(x)=e^{-\ell x}A(x)\zeta(1+x)\zeta(1-x)
\end{eqnarray}
and
\begin{eqnarray}
P_2(x)=\Big(\frac{\zeta'}{\zeta}\Big)'(1+x) - B(x),
\end{eqnarray}
we have
\begin{eqnarray}
J^*_{\zeta,t}(a;b)=P_1(a+b)+P_2(a+b).
\end{eqnarray}

\subsection{Triple correlation, RMT}

In this case we have
\begin{eqnarray}
&&J^*(a;b_1,b_2)=  D_{\phi,\phi}+ D_{a,b_1}+D_{a,b_2},
\end{eqnarray}
\begin{eqnarray}
D_{\phi,\phi}=0,
\end{eqnarray}
\begin{eqnarray}
D_{a,b_1}=e^{-N(a+b_1)}z(a+b_1)z(-a-b_1)
\left(\frac{z'}{z}(b_2-b_1)-\frac{z'}{z}(b_2+a) \right),
\end{eqnarray}
and
\begin{eqnarray}
D_{a,b_2}=e^{-N(a+b_2)}z(a+b_2)z(-a-b_2)
\left(\frac{z'}{z}(b_1-b_2)-\frac{z'}{z}(b_1+a) \right),
\end{eqnarray}

Thus,
\begin{eqnarray}  \label{eqn:Jabb}
J^*(a;b_1,b_2)
  &=&
e^{-N(a+b_1)}z(a+b_1)z(-a-b_1)
\left(\frac{z'}{z}(b_2-b_1)-\frac{z'}{z}(b_2+a) \right)\\
&&  +  \nonumber e^{-N(a+b_2)}z(a+b_2)z(-a-b_2)
\left(\frac{z'}{z}(b_1-b_2)-\frac{z'}{z}(b_1+a) \right) .
\end{eqnarray}

Then
\begin{eqnarray}R_{N,3}(u,v,w)&=&
 N^3 + N\big(J^*(i u; -i v) + J^*(i v; -i u) + J^*(i u; -i
w)\nonumber\\&&\qquad  + J^*(i w; -i u) +
         J^*(i w; -i v) + J^*(i v; -i w)\big)\nonumber\\
         && \qquad + \big(J^*(-i w; i u, i v) +
   J^*(-i v; i w, i u) + J^*(-i u; i w, i v)\nonumber\\
   && \qquad \qquad  + J^*(i u; -i w, -i v) +
   J^*(i v; -i w, -i u) + J^*(i w; -i u, -i v)\big)\\
   &=& \nonumber
   \det\left(\begin{array}{ccc}
   N& S(u - v)& S(u - w)\\ S(v - u)& N& S(v - w)\\ S(w - u)& S(w -
   v)&        N\end{array}\right).
\end{eqnarray}

\subsection{Triple correlation for $\zeta$}

The analogue for $\zeta$ is
\begin{eqnarray}
\label{eqn:Jzabb} &&J^*_{\zeta,t}(a;b_1,b_2)= e^{-\ell(a+b_1)}
A(a+b_1)\zeta(1+a+b_1)
\zeta(1-a-b_1)\nonumber \\
&&\qquad\times\bigg(\frac{\zeta'}{\zeta}(1+b_2-b_1)-\frac
{\zeta'}{\zeta}(1+a+b_2)-B_1(a+b_1,a+b_2)\bigg)\nonumber
\\
&& +e^{-\ell(a+b_2)}A(a+b_2) \zeta(1+a+b_2)
\zeta(1-a-b_2)\nonumber \\
&&\qquad\times\bigg(\frac{\zeta'}{\zeta}(1+b_1-b_2)-\frac
{\zeta'}{\zeta}(1+a+b_1)-B_1(a+b_2,a+b_1)\bigg)\\
&&\qquad\qquad\qquad\qquad+Q(a+b_1,a+b_2),\nonumber
\end{eqnarray}
where
\begin{eqnarray}
\label{eq:Qprime} Q(x,y)&=&-\sum_p \frac{\log^3 p}
{p^{2+x+y}(1-\frac{1}{p^{1+x}})( 1-\frac{1}{p^{1+y}})}
\end{eqnarray}
and
\begin{eqnarray}
\label{eq:Pprime} B_1(x,y)=  \sum_p\frac{\Big(1-\frac{1}{p^x}\Big)
\Big(1-\frac{1}{p^x}-\frac{1}{p^y} +\frac{1}{p^{1+y}}\Big) \log p}
{\Big(1-\frac{1}{p^{1-x+y}}\Big) \Big(1-\frac{1}{p^{1+y}}\Big)
\Big( 1-\frac{2}{p}+\frac{1}{p^{1+x}}\Big)p^{2-x+y}}.\nonumber
\end{eqnarray}
Then we conjecture that
\begin{eqnarray} \nonumber R_{\zeta,t,3}(u,v,w)&=& \ell^3 + \ell\big(J^*_{\zeta,t}(i u;
-i v) + J^*_{\zeta,t}(i v; -i u)+ J^*_{\zeta,t}(i u; -i w)
 \\ \nonumber
&&\qquad + J^*_{\zeta,t}(i w; -i u) +
         J^*_{\zeta,t}(i w; -i v) + J^*_{\zeta,t}(i v; -i w)\big)\\
         && \qquad + \big(J^*_{\zeta,t}(-i w; i u, i v) +
   J^*_{\zeta,t}(-i v; i w, i u) + J^*_{\zeta,t}(-i u; i w, i v)\\ \nonumber
   && \qquad \qquad  + J^*_{\zeta,t}(i u; -i w, -i v) +
   J^*_{\zeta,t}(i v; -i w, -i u) + J^*_{\zeta,t}(i w; -i u, -i v).
\end{eqnarray}

It is convenient to introduce the function
\begin{eqnarray}
P_3(a,b,c)=B_1(a+b,a+c)+\frac{\zeta'}{\zeta}(1+a+c)-\frac{\zeta'}{\zeta}(1+c-b).
\end{eqnarray}
In terms of this, we have
\begin{eqnarray}\nonumber
J^*_{\zeta,t}(\{a\};\{b_1,b_2\})=Q(a+b_1,a+b_2)-P_1(a+b_1)P_3(a,b_1,b_2)-P_1(a+b_2)P_3(a,b_2,b_1).
\end{eqnarray}

\subsection{Quadruple correlation, RMT}

With $A=\{a\},B=\{b_1,b_2,b_3\}$,  we have that
\begin{eqnarray}  \label{eqn:Jabbb}
&& J^*(a;b_1,b_2,b_3)=D_{\phi,\phi}+
D_{a,b_1}+D_{a,b_2}+D_{a,b_3}\nonumber
\\
&&  \qquad = e^{-N(a+b_1)}z(a+b_1)z(-a-b_1)
\left(\frac{z'}{z}(b_2-b_1)-\frac{z'}{z}(b_2+a) \right)
\left(\frac{z'}{z}(b_3-b_1)-\frac{z'}{z}(b_3+a) \right)  \nonumber\\
&& \qquad \quad +    e^{-N(a+b_2)}z(a+b_2)z(-a-b_2)
\left(\frac{z'}{z}(b_1-b_2)-\frac{z'}{z}(b_1+a)
   \right) \left(\frac{z'}{z}(b_3-b_2)-\frac{z'}{z}(b_3+a) \right)
\\
&& \qquad \quad +   \nonumber e^{-N(a+b_3)}z(a+b_3)z(-a-b_3)
\left(\frac{z'}{z}(b_1-b_3)-\frac{z'}{z}(b_1+a)
   \right) \left(\frac{z'}{z}(b_2-b_3)-\frac{z'}{z}(b_2+a) \right)
\end{eqnarray}

With  $A=\{a_1, a_2\},B=\{b_1,b_2\}$,  we have
\begin{eqnarray}
&&J^*(a_1,a_2;b_1,b_2)=  D_{\phi,\phi}+ D_{a_1,b_1}+D_{a_1,b_2}+
D_{a_2,b_1}+D_{a_2,b_2}+D_{a_1,a_2,b_1,b_2}.
\end{eqnarray}
where
\begin{eqnarray}
D_{\phi,\phi}=\left(\frac{z'}{z}\right)'(a_1+b_1)\left(\frac{z'}{z}\right)'(a_2+b_2)
+\left(\frac{z'}{z}\right)'(a_1+b_2)\left(\frac{z'}{z}\right)'(a_2+b_1)
\end{eqnarray}
and
\begin{eqnarray}
D_{a_1,b_1}&=&e^{-N(a_1+b_1)}z(a_1+b_1)z(-a_1-b_1)\\
&&\qquad\nonumber\times\big(H_{\{a_1\},\{b_1\}}(\{a_2\},\{b_2\})
+H_{\{a_1\},\{b_1\}}(\{a_2\})
H_{\{a_1\},\{b_1\}}(\{b_2\})\big)\\
&=& \nonumber e^{-N(a_1+b_1)}z(a_1+b_1)z(-a_1-b_1)
\left(\left(\frac{z'}{z}\right)'(a_2+b_2)\right.
\\
&&\qquad \nonumber\left.+\left(\frac{z'}{z}(a_2-a_1)-\frac{z'}{z}(a_2+b_1)
\right) \left(\frac{z'}{z}(b_2-b_1)-\frac{z'}{z}(b_2+a_1) \right)
\right);
\end{eqnarray}
the other $D_{a_i,b_j}$ are similar. Also,
\begin{eqnarray} &&
D_{a_1,a_2,b_1,b_2}=e^{-N(a_1+a_2+b_1+b_2)}\times \nonumber\\
\nonumber &&\quad \frac{z(a_1+b_1)z(-a_1-b_1)z(a_1+b_2)z(-a_1-b_2)
z(a_2+b_1)z(-a_2-b_1)z(a_2+b_2)z(-a_2-b_2)}
{z(a_1-a_2)z(a_2-a_1)z(b_1-b_2)z(b_2-b_1)}.
\end{eqnarray}
Thus,
\begin{eqnarray}&&  \nonumber
J^*(a_1, a_2; b_1, b_2) :=
   \left(\frac{z'}{z}\right)'(a_1 + b_1) \left(\frac{z'}{z}\right)'(a_2 + b_2)
   + \left(\frac{z'}{z}\right)'(a_1 + b_2) \left(\frac{z'}{z}\right)'(a_2 + b_1)\\
   && \qquad  +  \nonumber
     e^{-N (a_1 + b_1)} z(a_1 + b_1)
       z(-a_1 - b_1) \\
             &&\qquad \qquad \times \bigg(\left(\frac{z'}{z}\right)'(
             a_2 + b_2) + \big(\frac{z'}{z}(a_2 - a_1) - \frac{z'}{z}(a_2 + b_1)\big) \big(\frac{z'}{z}(b_2 - b_1) -
                 \frac{z'}{z}(b_2 + a_1)\big)\bigg) \\
                 &&\qquad  \nonumber +
     e^{-N (a_1 + b_2)} z(a_1 + b_2)
       z(-a_1 - b_2)  \\  \nonumber
             &&\qquad \qquad \times\bigg(\left(\frac{z'}{z}\right)'(
             a_2 + b_1)+ \big(\frac{z'}{z}(a_2 - a_1) - \frac{z'}{z}(a_2 + b_2)\big) \big(\frac{z'}{z}(b_1 - b_2) -
                 \frac{z'}{z}(b_1 + a_1)\big)\bigg)\\
                 &&\qquad  + \nonumber
     e^{-N (a_2 + b_1)} z(a_2 + b_1)
       z(-a_2 - b_1) \\  \nonumber
       && \qquad \qquad \times \bigg(\left(\frac{z'}{z}\right)'(
             a_1 + b_2) + \big(\frac{z'}{z}(a_1 - a_2) - \frac{z'}{z}(a_1 + b_1)\big) \big(\frac{z'}{z}(b_2 - b_1) -
                 \frac{z'}{z}(b_2 + a_2)\big)\bigg)\\
                 &&\qquad +  \nonumber
     e^{-N (a_2 + b_2)} z(a_2 + b_2)
       z(-a_2 - b_2) \\  \nonumber
       &&\qquad \qquad \times \bigg(\left(\frac{z'}{z}\right)'(
             a_1 + b_1) + \big(\frac{z'}{z}(a_1 - a_2) - \frac{z'}{z}(a_1 + b_2)\big) \big(\frac{z'}{z}(b_1 - b_2) -
                 \frac{z'}{z}(b_1 + a_2)\big)\bigg)\\
                 &&\qquad  +  \nonumber
     e^{-N (a_1 + a_2 + b_1 + b_2)}z(a_1 + b_1) z(a_1 + b_2) z(a_2 + b_1) z(a_2 +
     b_2) \\   \nonumber
     &&\qquad \qquad \times \frac{
       z(-a_1 - b_1) z(-a_1 - b_2) z(-a_2 - b_1)
       z(-a_2 - b_2)}{z(a_1 - a_2)z(a_2 - a_1)z(b_1 - b_2)z(b_2 -
       b_1)}.
\end{eqnarray}
Then
\begin{eqnarray}R_{N,4}(u,v,w,y)&=& N^4 + N^2 \big(J^*(i u; -i v) + J^*(i v; -i u) + J^*(i u;
-i w)  \nonumber  \\
&& \qquad \qquad  + J^*(i w; -i u)+
           J^*(i w; -i v) + J^*(i v; -i w)  \nonumber \\
   &&\qquad \qquad + J^*(i y; -i u) + J^*(i u; -i y) +
           J^*(i y; -i v)   \\
           &&\qquad \qquad \nonumber + J^*(i v; -i y) + J^*(i y; -i w) + J^*(i w; -i y)\big) \\
           &&\qquad +  \nonumber
     N \big(J^*(-i w; i u, i v) + J^*(-i v; i w, i u) + J^*(-i u; i w, i v) \\
   &&\qquad \qquad +  \nonumber
           J^*(i u; -i w, -i v) + J^*(i v; -i w, -i u) + J^*(i w; -i u, -i v)\\
   &&\qquad \qquad +   \nonumber
           J^*(-i w; i y, i v) + J^*(-i v; i w, i y)   + J^*(-i y; i w, i v) \\
   &&\qquad  \qquad +  \nonumber
           J^*(i y; -i w, -i v) + J^*(i v; -i w, -i y) + J^*(i w; -i y, -i v) \\
   &&\qquad \qquad +  \nonumber
           J^*(-i w; i u, i y) + J^*(-i y; i w, i u) + J^*(-i u; i w, i y) \\
   &&\qquad \qquad +  \nonumber
           J^*(i u; -i w, -i y)   + J^*(i y; -i w, -i u) + J^*(i w; -i u, -i y) \\
   &&\qquad \qquad +  \nonumber
           J^*(-i y; i u, i v) + J^*(-i v; i y, i u)  + J^*(-i u; i y, i v)\\
   &&\qquad  \qquad +  \nonumber
           J^*(i u; -i y, -i v) + J^*(i v; -i y, -i u) + J^*(i y; -i u, -i v)\big) \\
   &&\qquad   +  \nonumber
     J^*(-i y;i u, i v, i w) + J^*(-i w; i y, i u, i v) +
     J^*(-i v; i y, i u, i w) \\
   &&\qquad  +   \nonumber J^*(-i u; i y, i v, i w)  + J^*(i y; -i u, -i v, -i w) + J^*(i u; -i y, -i v, -i w) \\
   &&\qquad  +  \nonumber
     J^*(i v; -i y, -i u, -i w) + J^*(i w; -i y, -i u, -i v) \\
   &&\qquad +  \nonumber
     J^*(i y, i u; -i v, -i w) + J^*(i y, i v; -i u, -i w) +
     J^*(i y, i w; -i u, -i v) \\
   &&\qquad  +  \nonumber J^*(i u, i v; -i y, -i w) +
     J^*(i u, i w; -i y, -i v) + J^*(i v, i w; -i y, -i u)
\\&=&  \nonumber
   \det\left(\begin{array}{cccc}N& S(u - v)& S(u - w)& S(u - y)\\S(v - u)& N& S(v -
   w)&
         S(v - y)\\ S(w - u)&S(w - v)& N& S(w - y)\\ S(y - u)& S(y -
         v)&
         S(y - w)& N\end{array}\right)
\end{eqnarray}

\subsection{Quadruple correlation, $\zeta$}
We conjecture that
  \begin{eqnarray}  \nonumber  R_{\zeta,t,4}(u,v,w,y)&=& \ell^4 +
  \ell^2 \big(J^*_{\zeta,t}(i u; -i v) + J^*_{\zeta,t}(i v; -i u) + J^*_{\zeta,t}(i u;
-i w)\\  \nonumber
&& \qquad \qquad  + J^*_{\zeta,t}(i w; -i u)+
           J^*_{\zeta,t}(i w; -i v) + J^*_{\zeta,t}(i v; -i w)  \\
   &&\qquad \qquad   + J^*_{\zeta,t}(i y; -i u) + J^*_{\zeta,t}(i u; -i y) +
           J^*_{\zeta,t}(i y; -i v)\\
           &&\qquad \qquad \nonumber  + J^*_{\zeta,t}(i v; -i y) + J^*_{\zeta,t}(i y; -i w) + J^*_{\zeta,t}(i w; -i y)\big) \\
           &&\qquad + \nonumber
     \ell \big(J^*_{\zeta,t}(-i w; i u, i v) + J^*_{\zeta,t}(-i v; i w, i u) + J^*_{\zeta,t}(-i u; i w, i v) \\
   &&\qquad \qquad +  \nonumber
           J^*_{\zeta,t}(i u; -i w, -i v) + J^*_{\zeta,t}(i v; -i w, -i u) + J^*_{\zeta,t}(i w; -i u, -i v)\\
   &&\qquad \qquad +  \nonumber
           J^*_{\zeta,t}(-i w; i y, i v) + J^*_{\zeta,t}(-i v; i w, i y)   + J^*_{\zeta,t}(-i y; i w, i v) \\
   &&\qquad  \qquad +  \nonumber
           J^*_{\zeta,t}(i y; -i w, -i v) + J^*_{\zeta,t}(i v; -i w, -i y) + J^*_{\zeta,t}(i w; -i y, -i v) \\
   &&\qquad \qquad +  \nonumber
           J^*_{\zeta,t}(-i w; i u, i y) + J^*_{\zeta,t}(-i y; i w, i u) + J^*_{\zeta,t}(-i u; i w, i y) \\
   &&\qquad \qquad +  \nonumber
           J^*_{\zeta,t}(i u; -i w, -i y)   + J^*_{\zeta,t}(i y; -i w, -i u) + J^*_{\zeta,t}(i w; -i u, -i y) \\
   &&\qquad \qquad +  \nonumber
           J^*_{\zeta,t}(-i y; i u, i v) + J^*_{\zeta,t}(-i v; i y, i u)  + J^*_{\zeta,t}(-i u; i y, i v)\\
   &&\qquad  \qquad +  \nonumber
           J^*_{\zeta,t}(i u; -i y, -i v) + J^*_{\zeta,t}(i v; -i y, -i u) + J^*_{\zeta,t}(i y; -i u, -i v)\big) \\
   &&\qquad   +   \nonumber
     J^*_{\zeta,t}(-i y;i u, i v, i w) + J^*_{\zeta,t}(-i w; i y, i u, i v) +
     J^*_{\zeta,t}(-i v; i y, i u, i w) \\
   &&\qquad  +   \nonumber J^*_{\zeta,t}(-i u; i y, i v, i w)  + J^*_{\zeta,t}(i y; -i u, -i v, -i w) + J^*_{\zeta,t}(i u; -i y, -i v, -i w) \\
   &&\qquad  +  \nonumber
     J^*_{\zeta,t}(i v; -i y, -i u, -i w) + J^*_{\zeta,t}(i w; -i y, -i u, -i v) \\
   &&\qquad +   \nonumber
     J^*_{\zeta,t}(i y, i u; -i v, -i w) + J^*_{\zeta,t}(i y, i v; -i u, -i w) +
     J^*_{\zeta,t}(i y, i w; -i u, -i v) \\
   &&\qquad  +   \nonumber J^*_{\zeta,t}(i u, i v; -i y, -i w) +
     J^*_{\zeta,t}(i u, i w; -i y, -i v) + J^*_{\zeta,t}(i v, i w; -i y, -i u)
\end{eqnarray}
where the relevant $J^*_{\zeta,t}$ are now described.

We have
\begin{eqnarray}&&
 J^*_{\zeta,t}(\{a\},\{b_1,b_2,b_3\})=-\sum_p\frac{z_p(1+a+b_1)z_p(1+a+b_2)z_p(1+a+b_3)\log^4p }{p^{3+3a+b_1+b_2+b_3}}\\
&&\qquad \qquad \qquad +  \nonumber
W_1(a,b_1;b_2,b_3)+W_1(a,b_2;b_1,b_3)+W_1(a,b_3;b_1,b_2)
\end{eqnarray}
where
\begin{eqnarray}
W_1(a,b_1;b_2,b_3)=P_1(a+b_1)(P_3(a,b_1,b_2)P_3(a,b_1,b_3)-B_2(a,b_1;b_2,b_3)).
\end{eqnarray}
   with
   \begin{eqnarray}
   B_2(a,b_1;b_2,b_3)=\sum_p \frac{(p-1) p^{2 b_1} \left(p^{a+b_1}-1\right) \left(p^{a+b_1}-p\right) \log ^2 p}{\left(-2 p^{a+b_1}+p^{a+b_1+1}+1\right)^2
   \left(p^{b_1}-p^{b_2+1}\right) \left(p^{b_1}-p^{b_3+1}\right)}.
   \end{eqnarray}

We also have
\begin{eqnarray}  \nonumber &&J^*_{\zeta,t}(\{a_1,a_2\},\{b_1,b_2\})
=P_2(a_1+b_1)P_2(a_2+b_2)+P_2(a_1+b_2)P_2(a_2+b_1)\\
&&\qquad -B_4(a_1,a_2,b_1,b_2) \\&&+ \nonumber
e^{-\ell(a_1+a_2+b_1+b_2)}A^*(a_1,a_2,b_1,b_2)
                \frac{Z_{\zeta}(\{a_1,a_2\},\{b_1,b_2\})Z_{\zeta}(\{-a_1,-a_2\},\{-b_1,-b_2\})}
{Z_{\zeta}^\dagger(\{a_1,a_2\},\{-a_1,-a_2\})Z_{\zeta}^\dagger
(\{b_1,b_2\},\{-b_1,-b_2\})}
 \\  \nonumber
                &&\qquad +W(a_1,b_1;a_2,b_2)+W(a_1,b_2;a_2,b_1)+W(a_2,b_1;a_1,b_2)+W(a_2,b_2;a_1,b_1)
\end{eqnarray}
where
\begin{eqnarray}&&
A^*(a_1,a_2,b_1,b_2)=\prod_p\frac{Z_p(\{a_1,a_2\},\{b_1,b_2\})Z_p(\{-a_1,-a_2\},\{-b_1,-b_2\})}
{Z_p(\{a_1,a_2\},\{-a_1,-a_2\})Z_p(\{b_1,b_2\},\{-b_1,-b_2\})}\\
&&  \nonumber \qquad \times p^{-a_1-a_2-b_1-b_2} \bigg( 1+\frac
{z_p(1-a_1-b_1)z_p(1-a_2-b_1)z_p(b_2-b_1)}{z_p(1)z_p(-a_1-b_1)z_p(-a_2-b_1)z_p(1+b_2-b_1)}   \\
&& \qquad \qquad  \nonumber +\frac {z_p(1-a_1-b_2)z_p(1-a_2-b_2)z_p(b_1-b_2)}
{ z_p(1)z_p(-a_1-b_2)z_p(-a_2-b_2)z_p(1+b_1-b_2)} \bigg).
\end{eqnarray}
and
\begin{eqnarray} &&
W(a_1,b_1;a_2,b_2)=P_1(a_1+b_1)
 \bigg\{P_2(a_2+b_2)-B_3(a_1,a_2;b_1,b_2)\\
&& \qquad \qquad  \nonumber + P_3(a_1,b_1,b_2)P_3(b_1,a_1,a_2)\bigg\}
\end{eqnarray}
with
\begin{eqnarray}&&
B_3(a_1,a_2;b_1,b_2)=\sum_p\log^2p \bigg(\frac{(p-1)^2
\left(p^{a_1+b_1}-1\right)^2
p^{a_1+b_1}}{\left(p^{a_1}-p^{a_2+1}\right) \left(-2
   p^{a_1+b_1}+p^{a_1+b_1+1}+1\right)^2 \left(p^{b_1}-p^{b_2+1}\right)}\\
   && \qquad \nonumber +\frac{C(a_1,a_2;b_1,b_2)}{\left(p^{a_1}-p^{a_2+1}\right) \left(-2
   p^{a_1+b_1}+p^{a_1+b_1+1}+1\right) \left(p^{b_2+1}-p^{b_1}\right)
   \left(p^{a_2+b_2+1}-1\right)}\\
   && \qquad \qquad \nonumber +\frac{1}{p^{a_2+b_2+1}-1} \bigg),
\end{eqnarray}
\begin{eqnarray} &&
C(a_1,a_2;b_1,b_2)=-p^{a_1+b_1}+2
   p^{a_1+b_1+1}-p^{a_2+b_1+2}-p^{2 a_1+2 b_1+1}+p^{a_1+a_2+2
   b_1+1}-p^{a_1+b_2+2}\nonumber \\
   &&\qquad +p^{a_2+b_2+2}+p^{2 a_1+b_1+b_2+1}-2
   p^{a_1+a_2+b_1+b_2+2}+p^{a_1+a_2+b_1+b_2+3};
   \end{eqnarray}
and
\begin{eqnarray} \nonumber
B_4(a_1,a_2;b_1,b_2)=\sum_p \frac{
 (3 - p^{1 + a_1 + b_1} - p^{1 + a_2 + b_1} -
                p^{1 + a_1 + b_2} - p^{1 + a_2 + b_2} +
                p^{2 + a_1 + a_2 + b_1 + b_2})\log^4 p}{(
                p^{1 + a_1 + b_1}-1)(
                p^{1 + a_2 + b_1})-1)(
                p^{1 + a_1 + b_2}-1)(
                p^{1 + a_2 + b_2}-1)}.
                \end{eqnarray}

\subsection{Auxiliary functions}\label{sect:auxfunc}
For ease of reference we list the various auxiliary functions we have introduced in this example section.

\begin{eqnarray}
\label{eq:AprimeA} A(x)&=&\prod_p
\frac{(1-\tfrac{1}{p^{1+x}})(1-\tfrac{2}{p}+\tfrac{1}{p^{1+x}})}
{(1-\tfrac{1}{p})^2},
\end{eqnarray}
 \begin{eqnarray}&&
A^*(a_1,a_2,b_1,b_2)=\prod_p\frac{Z_p(\{a_1,a_2\},\{b_1,b_2\})Z_p(\{-a_1,-a_2\},\{-b_1,-b_2\})}
{Z_p(\{a_1,a_2\},\{-a_1,-a_2\})Z_p(\{b_1,b_2\},\{-b_1,-b_2\})}\\
&&\qquad \nonumber \times p^{-a_1-a_2-b_1-b_2} \bigg( 1+\frac
{z_p(1-a_1-b_1)z_p(1-a_2-b_1)z_p(b_2-b_1)}{z_p(1)z_p(-a_1-b_1)z_p(-a_2-b_1)z_p(1+b_2-b_1)}   \\
&& \qquad \qquad \nonumber +\frac {z_p(1-a_1-b_2)z_p(1-a_2-b_2)z_p(b_1-b_2)}
{ z_p(1)z_p(-a_1-b_2)z_p(-a_2-b_2)z_p(1+b_1-b_2)} \bigg).
\end{eqnarray}
\begin{eqnarray}
\label{eq:BprimeA} B(x)&=&\sum_p\left(\frac{\log
p}{p^{1+x}-1}\right)^2,
\end{eqnarray}
\begin{eqnarray}
\label{eq:PprimeA} B_1(x,y)=  \sum_p\frac{\Big(1-\frac{1}{p^x}\Big)
\Big(1-\frac{1}{p^x}-\frac{1}{p^y} +\frac{1}{p^{1+y}}\Big) \log p}
{\Big(1-\frac{1}{p^{1-x+y}}\Big) \Big(1-\frac{1}{p^{1+y}}\Big)
\Big( 1-\frac{2}{p}+\frac{1}{p^{1+x}}\Big)p^{2-x+y}}.
\end{eqnarray}
\begin{eqnarray}
   B_2(a,b_1;b_2,b_3)=\sum_p \frac{(p-1) p^{2 b_1} \left(p^{a+b_1}-1\right) \left(p^{a+b_1}-p\right) \log ^2 p}{\left(-2 p^{a+b_1}+p^{a+b_1+1}+1\right)^2
   \left(p^{b_1}-p^{b_2+1}\right) \left(p^{b_1}-p^{b_3+1}\right)}.
   \end{eqnarray}
\begin{eqnarray}&&
B_3(a_1,a_2;b_1,b_2)=\sum_p\log^2p \bigg(\frac{(p-1)^2
\left(p^{a_1+b_1}-1\right)^2
p^{a_1+b_1}}{\left(p^{a_1}-p^{a_2+1}\right) \left(-2
   p^{a_1+b_1}+p^{a_1+b_1+1}+1\right)^2 \left(p^{b_1}-p^{b_2+1}\right)}\\
   && \qquad \nonumber +\frac{C(a_1,a_2;b_1,b_2)}{\left(p^{a_1}-p^{a_2+1}\right) \left(-2
   p^{a_1+b_1}+p^{a_1+b_1+1}+1\right) \left(p^{b_2+1}-p^{b_1}\right)
   \left(p^{a_2+b_2+1}-1\right)}\\
   && \qquad \qquad \nonumber +\frac{1}{p^{a_2+b_2+1}-1} \bigg)
\end{eqnarray}
\begin{eqnarray} \nonumber
B_4(a_1,a_2;b_1,b_2)=\sum_p \frac{
 (3 - p^{1 + a_1 + b_1} - p^{1 + a_2 + b_1} -
                p^{1 + a_1 + b_2} - p^{1 + a_2 + b_2} +
                p^{2 + a_1 + a_2 + b_1 + b_2})\log^4 p}{(
                p^{1 + a_1 + b_1}-1)(
                p^{1 + a_2 + b_1})-1)(
                p^{1 + a_1 + b_2}-1)(
                p^{1 + a_2 + b_2}-1)}.
                \end{eqnarray}
\begin{eqnarray} &&  \nonumber
C(a_1,a_2;b_1,b_2)=-p^{a_1+b_1}+2
   p^{a_1+b_1+1}-p^{a_2+b_1+2}-p^{2 a_1+2 b_1+1}+p^{a_1+a_2+2
   b_1+1}-p^{a_1+b_2+2}\\
   &&\qquad +p^{a_2+b_2+2}+p^{2 a_1+b_1+b_2+1}-2
   p^{a_1+a_2+b_1+b_2+2}+p^{a_1+a_2+b_1+b_2+3};
   \end{eqnarray}
\begin{eqnarray}
P_1(x)=e^{-\ell x}A(x){\zeta}(1+x){\zeta}(1-x)
\end{eqnarray}
\begin{eqnarray}
P_2(x)=\Big(\frac{{\zeta}'}{{\zeta}}\Big)'(1+x) - B(x),
\end{eqnarray}
\begin{eqnarray}
P_3(a,b,c)=B_1(a+b,a+c)+\frac{{\zeta}'}{{\zeta}}(1+a+c)-\frac{{\zeta}'}{{\zeta}}(1+c-b).
\end{eqnarray}
\begin{eqnarray}
\label{eq:QprimeA} Q(x,y)&=&-\sum_p \frac{\log^3 p}
{p^{2+x+y}(1-\frac{1}{p^{1+x}})( 1-\frac{1}{p^{1+y}})}
\end{eqnarray}
\begin{eqnarray} &&
W(a_1,b_1;a_2,b_2)=P_1(a_1+b_1)
 \bigg\{P_2(a_2+b_2)-B_3(a_1,a_2;b_1,b_2)\\
&& \qquad \qquad + P_3(a_1,b_1,b_2)P_3(b_1,a_1,a_2)\bigg\}\nonumber
\end{eqnarray}
\begin{eqnarray}
W_1(a,b_1;b_2,b_3)=P_1(a+b_1)(P_3(a,b_1,b_2)P_3(a,b_1,b_3)-B_2(a,b_1;b_2,b_3)).
\end{eqnarray}

\end{document}